\numberwithin{equation}{section}
\newtheorem{thm}{Theorem}[section]
\newtheorem{lemma}[thm]{Lemma}
\newtheorem{prop}[thm]{Proposition}
\newtheorem{cor}[thm]{Corollary}
\newtheorem{definition}[thm]{Definition}
\newtheorem{rmk}[thm]{Remark}
\newtheorem{bigthm}{Theorem}
\newcommand{\ignor}[1]{}
\date{\today}
\address{\hfill\upshape\today.}
\subjclass[2010]{Primary 20E18; Secondary 20E06, 20E08, 20J05}
\begin{document}


\title[Profinite extensions of centralizers]
{Profinite extensions of centralizers and the profinite completion of limit groups}
\author{Pavel Zalesskii}
\address{Department of Mathematics, University of Brasilia, Brazil}
\email{pz@mat.unb.br \textrm{and} zapata@mat.unb.br}
\author{Theo Zapata}

\maketitle

\vspace{-7pt}

\begin{quote}
\footnotesize
\textsc{Abstract.}
We introduce and investigate
a class of profinite groups 
defined via extensions of centralizers
analogous to the
extensively studied class of
finitely generated fully residually free groups,
that is, limit groups (in the sense of Z. Sela).
From the fact that
the profinite completion of limit groups
belong to this class, 
results 
on their group-theoretical structure
and homological properties
are obtained. 
\end{quote}

\vspace{21pt}

\baselineskip 16.0pt

\phantomsection
\section{Introduction}
\label{s:intro}

In the last decades,
limit groups have been attracting the attention of many mathematicians
({\it e.g.}, \cite{CG:05}, \cite{AB:06}, \cite{BH:07}, \cite{CZ:07}, \cite{Wilton:08}, 
\cite{Kochloukova:10}, \cite{BK:17}),
specially because they played a fundamental role
in the recent solution 
(\cite{Sela:01} {\it et seq.} and \cite{KhM:98a} {\it et seq.})
of the outstanding Tarski problem on free groups.

Two basic group-theoretical reasons for so much attention are
the following:
limit groups are approximable by free groups,
according to their many definitions 
({\it cf.} \cite{Sela:01}, \cite{BBaumslag:67},  \cite{Remeslennikov:89}, \cite{KhM:98a}); 
and,
limit groups constitute a class of groups that envelopes
free groups of finite rank
and free abelian groups of finite rank.
Limit groups also have many interesting properties.
Some of their most simple properties are:
being torsion-free, 
having infinite abelianization (if the group is non-trivial),
that solubility implies commutativity,
being centerless, 
and being commutativity-transitive
({\it cf.} \cite[Prop.~1]{CG:05}).

Our present work introduces and studies
a class of profinite groups $\mathcal Z$,
defined via extensions of centralizers
analogous to limit groups. 

A profinite extension of a centralizer is 
a free profinite amalgamated product $G\amalg_C A$, 
where $A$ is a finitely generated free abelian profinite group, 
$C$ is a procyclic direct factor of $A$, 
and $N_G(H)=C$ for every non-trivial closed subgroup $H$ of $C$.  
The class $\mathcal Z$ consists of all finitely generated closed subgroups 
of profinite groups obtained by finitely many iterated extensions of 
centralizers starting from a free profinite group of finite rank 
(see \hyperref[s:main_results]{Section~\ref{s:class_Z}} for details).

The following results are proved in 
sections \ref{s:class_Z} and \ref{s:main_results}.


\begin{bigthm}\label{t:cohom_main}
Let $G$ be a profinite group in the class $\mathcal Z$. Then
$G$ has finite cohomological dimension
and hence is torsion-free;
furthermore, 
if its cohomological $p$-dimension $cd_p(G)$ is $\ge 2$ 
then
$cd_p(G)=\max\{2,\alpha_p(G)\}$, where
$\alpha_p(G)$ is the supremum of the $cd_p(A)$
with $A$ running over the abelian closed subgroups of $G$. In particular, this gives  a uniform (finite) bound for the rank of abelian subgroups of $G$.
\end{bigthm}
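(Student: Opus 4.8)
The plan is to induct on the number $n$ of iterated extensions of centralizers used to construct the ambient group, with the Mayer--Vietoris sequence in continuous cohomology of a proper free profinite amalgamated product as the main engine, and with profinite Bass--Serre theory used to reduce the case of an arbitrary finitely generated closed subgroup to the case of the ambient group itself. The base case $n=0$ is a free profinite group $F$ of finite rank, for which $cd_p(F)\le 1$ for every prime $p$; thus $F$ is torsion-free and the hypothesis $cd_p\ge 2$ is vacuous.

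For the inductive step on the ambient group, write $\tilde G = G_0\amalg_C A$, where $G_0$ arises from fewer extensions. The amalgam is proper, so for every finite $p$-primary module $M$ one has the exact sequence
$$\cdots \to H^{i}(\tilde G,M)\to H^{i}(G_0,M)\oplus H^{i}(A,M)\to H^{i}(C,M)\to H^{i+1}(\tilde G,M)\to\cdots.$$
Since $C$ is procyclic, $cd_p(C)=1$, and since $A$ is free abelian profinite, $cd_p(A)$ equals the rank of its $p$-Sylow subgroup; the sequence then gives
$$cd_p(\tilde G)\le \max\{cd_p(G_0),\,cd_p(A),\,cd_p(C)+1\}=\max\{cd_p(G_0),\,cd_p(A),\,2\}.$$
By the inductive hypothesis $cd_p(G_0)\le\max\{2,\alpha_p(G_0)\}$, while $cd_p(A)\le\alpha_p(\tilde G)$ because $A$ is abelian; hence $cd_p(\tilde G)\le\max\{2,\alpha_p(\tilde G)\}$, and in particular the cohomological dimension stays finite. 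For the reverse inequality I use that $cd_p$ does not increase on closed subgroups, so $cd_p(\tilde G)\ge cd_p(B)$ for every abelian closed $B$, giving $cd_p(\tilde G)\ge\alpha_p(\tilde G)$; the bound $cd_p(\tilde G)\ge 2$ for a genuine amalgamation is read off from the connecting map $H^{1}(C,M)\to H^{2}(\tilde G,M)$, whose nonvanishing is forced by the malnormality condition $N_{G_0}(H)=C$ together with $C$ being a proper direct factor of $A$. Combining the two bounds yields $cd_p(\tilde G)=\max\{2,\alpha_p(\tilde G)\}$ whenever $cd_p(\tilde G)\ge 2$.

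To pass from the ambient group to an arbitrary $G\in\mathcal Z$, note that $G$ is a finitely generated closed subgroup of some $\tilde G$ as above, hence acts on the profinite tree of the decomposition of $\tilde G$. By profinite Bass--Serre theory $G$ is itself the profinite fundamental group of a finite graph of profinite groups whose edge groups are procyclic intersections $G\cap C^{g}$ and whose vertex groups are intersections of $G$ with conjugates of $G_0$ and of $A$; these vertex groups again lie in $\mathcal Z$ or are free abelian. The Mayer--Vietoris bound then applies verbatim to this decomposition and gives $cd_p(G)\le\max\{2,\alpha_p(G)\}$, which together with $cd_p(G)\ge\max\{2,\alpha_p(G)\}$ (valid once $cd_p(G)\ge 2$) proves the formula; torsion-freeness follows because a nontrivial torsion element would make $cd_p(G)$ infinite for the relevant prime. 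Finally the uniform rank bound is formal: for any abelian closed $B\le G$ the rank of $B$ equals $\max_p cd_p(B)\le\max_p\alpha_p(G)\le\max_p cd_p(G)=cd(G)<\infty$.

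I expect the main obstacle to be the subgroup step: establishing, in the profinite category, the graph-of-groups decomposition of an arbitrary finitely generated closed subgroup with the correct control on its vertex and edge groups, and verifying that each amalgam appearing there is proper so that the Mayer--Vietoris sequences remain exact. This is precisely where the defining condition $N_{G_0}(H)=C$ on centralizer extensions is indispensable.
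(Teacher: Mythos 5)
Your overall strategy --- induction on the number of extensions of centralizers, a Mayer--Vietoris bound for the amalgam, and the observation that the reverse inequality is immediate from monotonicity of $cd_p$ on closed subgroups once $cd_p(G)\ge 2$ --- is the same as the paper's. The genuine gap is the step you yourself flag as the main obstacle: passing from the ambient group $G_n=G_{n-1}\amalg_{C_{n-1}}A_{n-1}$ to an arbitrary finitely generated closed subgroup $G$. You assert that profinite Bass--Serre theory exhibits $G$ as the fundamental group of a \emph{finite graph of profinite groups} with vertex groups the intersections of $G$ with conjugates of $G_{n-1}$ and $A_{n-1}$ and with procyclic edge groups. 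No such decomposition theorem exists in the profinite category: there is no Kurosh-type subgroup theorem for closed subgroups of profinite amalgamated products, and a closed subgroup acting on a profinite tree need not split over its edge stabilizers at all. As written, this step does not go through, and the defining condition $N_{G_{n-1}}(H)=C_{n-1}$ does not rescue it.

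The paper sidesteps the issue entirely: the Mayer--Vietoris/spectral-sequence bound of Zalesskii--Mel$'$nikov \cite[(2.7)]{ZM:88} applies to \emph{any} closed subgroup $G$ acting on the standard profinite tree of $G_{n-1}\amalg_{C_{n-1}}A_{n-1}$ and yields directly
\[
cd_p(G)\le\sup\{2,\ cd_p(G\cap xG_{n-1}x^{-1}),\ cd_p(G\cap yA_{n-1}y^{-1})\mid x,y\in G_n\},
\]
using only $cd_p(C_{n-1})\le 1$ and requiring no splitting of $G$; induction on $n$ then finishes exactly as you outline. So your proof is repaired by replacing the graph-of-groups decomposition with this cohomological result for groups acting on profinite trees. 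Two smaller remarks: the unconditional lower bound $cd_p(\tilde G)\ge 2$ that you try to extract from nonvanishing of the connecting map ${\rm H}^1(C,M)\to {\rm H}^2(\tilde G,M)$ is neither justified nor needed --- the theorem is conditional on $cd_p(G)\ge 2$, and for the ambient group the inequality already follows from the closed subgroup $A_{n-1}\cong\widehat{\mathbb Z}^{\,m}$ with $m\ge 2$; also, the intersections $G\cap xG_{n-1}x^{-1}$ need not be finitely generated, so the induction must be carried out over all closed subgroups of the groups $G_{n-1}$, not only over members of $\mathcal Z$.
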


The next theorem is a statement of a Tits alternative flavour.

\begin{bigthm}\label{t:virtsol_main}
Let $G$ be a profinite  group in the class $\mathcal Z$. Then
$G$ has no non-abelian free pro-$p$ closed subgroup
for each prime $p$ 
if, and only if,
$G$ is abelian 
or meta-procyclic projective.
\end{bigthm}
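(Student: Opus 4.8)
The two alternatives in the statement are visibly incompatible with harbouring a non-abelian free pro-$p$ subgroup, and the implication "$\Leftarrow$" is the routine direction: an abelian group has no non-abelian closed subgroup whatsoever, while every closed subgroup of a meta-procyclic group is again meta-procyclic, hence prosoluble, and so cannot be a non-abelian free pro-$p$ group. The substance is the forward implication, which I would phrase as a dichotomy: every $G\in\mathcal Z$ either contains a non-abelian free pro-$p$ subgroup for some prime $p$, or is abelian or meta-procyclic projective. The organising principle throughout is the criterion that, for a torsion-free profinite group, projectivity is equivalent to $cd_p(G)\le 1$ for every $p$, i.e.\ to all Sylow subgroups being free pro-$p$; this is precisely what upgrades "meta-procyclic with procyclic Sylow subgroups" to "meta-procyclic projective".

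The plan is to argue by induction on the number $n$ of iterated extensions of centralizers needed to realise an ambient group $\Gamma_n\supseteq G$, where $\Gamma_0=F$ is free profinite of finite rank and $\Gamma_k=\Gamma_{k-1}\amalg_{C_k}A_k$. For the base case $n=0$ the group $G$ is a finitely generated closed subgroup of a free profinite group, hence projective; the hypothesis forbids free pro-$p$ subgroups of rank $\ge 2$, so every Sylow subgroup of $G$ is procyclic, and a profinite group all of whose Sylow subgroups are procyclic is meta-procyclic (the profinite analogue of the Z-group structure) and, its Sylows being free pro-$p$, projective. This settles $n=0$.

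For the inductive step I would exploit the standard profinite tree $T$ on which $\Gamma_n=\Gamma_{n-1}\amalg_{C}A$ acts, with vertex stabilisers the conjugates of $\Gamma_{n-1}$ and $A$ and edge stabilisers the conjugates of $C$, and restrict the action to $G$. The defining hypothesis $N_{\Gamma_{n-1}}(H)=C$ for every non-trivial closed $H\le C$ makes this action acylindrical: distinct edge stabilisers intersect trivially and edge stabilisers are self-normalising. If $G$ is elliptic it lies in a conjugate of $A$, whence abelian, or in a conjugate of $\Gamma_{n-1}$, whence the inductive hypothesis applies (the no-free-pro-$p$ condition passes to $G$), finishing this case. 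If $G$ is not elliptic, I would invoke a Tits alternative for acylindrical actions on profinite trees: either $G$ contains a non-abelian free pro-$p$ subgroup, contrary to hypothesis, or $G$ stabilises an edge or an invariant line, in which case $G$ sits in an extension $1\to G\cap C^{g}\to G\to Q\to 1$ with procyclic kernel and procyclic image $Q$, so $G$ is meta-procyclic; acylindricity together with the bounded-rank conclusion of Theorem~\ref{t:cohom_main} then forces all Sylow subgroups of this $G$ to be procyclic, whence $cd_p(G)\le 1$ for all $p$ and $G$ is projective.

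The decisive and most delicate step is the non-elliptic case. The hard part will be establishing the profinite Tits alternative for the acylindrical $G$-action on $T$: extracting a genuine ping-pong free pro-$p$ subgroup from a non-elliptic action — the profinite counterpart of hyperbolic isometries with transverse axes — and, conversely, showing that in the absence of such a subgroup $G$ is pinned down to stabilising an edge or a line. The second genuinely subtle point is to exclude the \emph{non-projective} non-abelian meta-procyclic possibilities, namely those in which the procyclic edge kernel and the procyclic translation quotient share a prime $p$ and so produce a $\mathbb Z_p\rtimes\mathbb Z_p$ (or $\mathbb Z_p^2$) Sylow subgroup with $cd_p=2$: such a $G$ would be neither abelian nor meta-procyclic projective, yet would still contain no free pro-$p$ subgroup, and would therefore contradict the theorem. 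I expect the centraliser/normaliser hypothesis to be exactly what rules these out, by forcing the procyclic edge group and the procyclic quotient to be supported on coprime sets of primes, so that each Sylow subgroup stays procyclic and projectivity is recovered.
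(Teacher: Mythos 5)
Your outline follows the paper's proof almost exactly: the backward direction is routine, the base case $n=0$ is handled by observing that all Sylow subgroups of a projective group without non-abelian free pro-$p$ subgroups are procyclic (the paper's Lemma~\ref{l:sylow_cyclic}, which upgrades this to meta-procyclic projective via Zassenhaus' classification of finite groups with cyclic Sylow subgroups and Schur--Zassenhaus), and the inductive step restricts the action on the standard profinite tree of $G_{n-1}\amalg_{C_{n-1}}A_{n-1}$ to the given subgroup. The difference is that the two points you single out as ``decisive and most delicate'' and leave as expectations are not proved in the paper either --- they are discharged in one stroke by citing an existing classification, namely Zalesskii's theorem (Theorem~\ref{teo:caract-zal90} here, from [Zal90, Thm.~3.1]) on profinite groups acting on profinite trees without non-abelian free pro-$p$ subgroups: such a group either fixes a vertex, or contains the desired free pro-$p$ subgroup disjoint from all vertex stabilizers, or has a normal edge stabilizer with quotient of one of three explicit types, of which the only torsion-free one is already $\mathbb{Z}_\pi\rtimes\mathbb{Z}_\rho$ with $\pi\cap\rho=\varnothing$. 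The role of the normalizer condition {\bf A2} is then only to supply $2$-acylindricity (Lemma~\ref{l:2acyl}), which forces the normal edge stabilizer in the third case to be trivial (it fixes arbitrarily distant edges of its orbit), so the subgroup itself is isomorphic to the quotient; torsion-freeness (Theorem~\ref{t:cohom_main}) eliminates the Frobenius and dihedral types. In particular, your guess that the centralizer/normalizer hypothesis is what forces the coprimality of $\pi$ and $\rho$ is slightly off: the coprimality is part of the conclusion of the classification theorem for a group acting freely-enough on a tree without free pro-$p$ subgroups, not a consequence of {\bf A2}. If you do not want to quote that theorem, you would indeed have to reprove a profinite Tits alternative from scratch, which is a substantial undertaking and not something the ping-pong heuristic alone delivers in the profinite category (minimal invariant subtrees need not be lines, and hyperbolic elements generate copies of $\widehat{\mathbb{Z}}$ rather than $\mathbb{Z}$); so as written your argument has a genuine gap exactly where you said it would.
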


We also completely describe the 
finitely generated pro-$p$ subgroups of groups from class $\mathcal Z$.

\begin{bigthm} \label{t:fg_pro-p_main} 
A pro-$p$ group in the class $\mathcal Z$ is 
a free pro-$p$ product of free abelian pro-$p$ groups.
\end{bigthm}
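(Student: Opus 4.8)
The plan is to argue by induction on the number $n$ of iterated extensions of centralizers used to build an ambient group $G_n$ containing $G$ as a finitely generated closed pro-$p$ subgroup, where $G_0$ is free profinite of finite rank and $G_{i+1}=G_i\amalg_{C_i}A_i$. For the base case $n=0$ I would use that a free profinite group is projective, that closed subgroups of projective profinite groups are projective, and that a projective pro-$p$ group is free pro-$p$; hence $G$ is a finitely generated free pro-$p$ group, i.e.\ a free pro-$p$ product of finitely many copies of $\mathbb{Z}_p$, each a free abelian pro-$p$ group of rank one.

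For the inductive step write $G_n=G_{n-1}\amalg_C A$ and let $G$ act on the standard profinite tree $T$ of this decomposition, whose vertex stabilizers are the conjugates of $G_{n-1}$ and of $A$ and whose edge stabilizers are the conjugates of $C$. If $G$ fixes a vertex it lies in a conjugate of $A$ (so $G$ is free abelian pro-$p$, a single factor) or of $G_{n-1}$ (so we are done by induction); assume otherwise. I would then invoke the structure theory of finitely generated pro-$p$ groups acting on profinite trees to present $G$ as the pro-$p$ fundamental group of a finite graph of pro-$p$ groups $(\mathcal G,\Gamma)$ whose vertex groups are the intersections of $G$ with conjugates of $G_{n-1}$ and of $A$, and whose edge groups are the intersections $G\cap C^g$. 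Since $G$ is torsion-free by Theorem~\ref{t:cohom_main} and $C$ is procyclic, each edge group is trivial or isomorphic to $\mathbb{Z}_p$; each abelian vertex group $G\cap A^g$ is free abelian pro-$p$; and each vertex group $G\cap G_{n-1}^h$ is, by the inductive hypothesis, a free pro-$p$ product of free abelian pro-$p$ groups.

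The heart of the argument is to collapse the edges carrying a nontrivial group. Fix such an edge, with group $E\cong\mathbb{Z}_p$, joining an abelian vertex $v$ (group $G\cap A^g$) to a vertex $w$ of $G_{n-1}$-type (group $G_w=G\cap G_{n-1}^h$). The defining condition $N_{G_{n-1}}(H)=C$ for every nontrivial closed $H\le C$ forces $C_{G_{n-1}}(H)=C$, and after conjugation yields $C_{G_w}(E)=G_w\cap C^x=E$, so that $E$ is self-centralizing in $G_w$. By the pro-$p$ Kurosh subgroup theorem the procyclic group $E$ lies in a conjugate of a single free abelian factor $M$ of $G_w$; since $M$ is abelian and contains (a conjugate of) $E$, self-centrality forces $M=E$. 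Hence $G_w$ splits as a free pro-$p$ product $E\amalg R$, and collapsing the edge turns $G_w\amalg_E(G\cap A^g)$ into $R\amalg(G\cap A^g)$, simply absorbing the free abelian group $G\cap A^g$ as a new free factor while deleting the factor $E$. Carrying out this collapse at every edge with nontrivial group, and using the Kurosh/Bass--Serre description for the remaining edges (which now have trivial group and contribute only free factors $\mathbb{Z}_p$), I would conclude that $G$ is a free pro-$p$ product of free abelian pro-$p$ groups.

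The main obstacle is the passage from the action on the profinite tree $T$ to a genuine pro-$p$ graph-of-groups decomposition: $T$ is a priori only a profinite, not a pro-$p$, tree, so one must verify that the finitely generated pro-$p$ group $G$ acts on an associated pro-$p$ tree with finite quotient graph and with the stated stabilizers, which is exactly where the theory of pro-$p$ groups acting on profinite trees is indispensable. A secondary technical point is to work throughout with a reduced (efficient) graph of groups, so that the collapsing above is unambiguous even when an abelian vertex meets several nontrivial edges; the self-centralizing property of the edge groups on the $G_{n-1}$-side is precisely what prevents these collapses from introducing a nontrivial centre, and hence guarantees that the resulting decomposition is a free, rather than amalgamated, pro-$p$ product.
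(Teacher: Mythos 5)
Your inductive frame and base case are fine, but the two central steps of your inductive argument are genuine gaps, and they are gaps of substance rather than of technique. First, the decomposition you posit --- that a finitely generated closed pro-$p$ subgroup $G$ of $G_n=G_{n-1}\amalg_C A$ is the pro-$p$ fundamental group of a finite graph of pro-$p$ groups whose vertex groups are the intersections $G\cap G_{n-1}^h$, $G\cap A^g$ and whose edge groups are the intersections $G\cap C^x$ --- is not available: profinite Bass--Serre theory provides no such structure theorem for closed subgroups when the edge stabilizers are allowed to be nontrivial. The paper proceeds in the opposite order: it first uses the $2$-acylindricity of the action of $G_n$ on its standard tree (Lemma~\ref{l:2acyl}), the finiteness of the set of maximal vertex stabilizers up to conjugation \cite{SZ:14}, and the collapsing Lemma~\ref{l:k_edge_stabilizers} (made applicable by \cite[Lemma~11.2]{WZ:17a}) to replace the tree by a quotient tree on which the edge stabilizers of $G$ are forced to be \emph{trivial}, and only then invokes the splitting theorem of \cite{HZZ:16}, which requires trivial edge stabilizers (see Theorem~\ref{t:fg_pro-p}; alternatively \cite[Thm.~3.8]{WZ:17b}). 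Acylindricity is the missing idea in your proposal: it is what makes the nontrivial-edge-group case disappear, rather than something to be collapsed away by hand afterwards.

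Second, even granting your graph-of-groups decomposition, the collapsing step fails. The pro-$p$ Kurosh subgroup theorem holds for open, not for arbitrary closed, subgroups of a free pro-$p$ product, and a procyclic subgroup $E\cong\mathbb{Z}_p$ of $G_w=\coprod_i M_i\amalg F$ need not be conjugate into any factor: it may act freely on the pro-$p$ tree associated with that decomposition. Already in a free pro-$p$ group of rank $2$, viewed as $\overline{\langle a\rangle}\amalg\overline{\langle b\rangle}$, the closed subgroup generated by the commutator $[a,b]$ is procyclic, self-centralizing, and conjugate into no factor. So self-centralization of $E$ in $G_w$ does not make $E$ a free factor, and in this hyperbolic configuration $G_w\amalg_E(G\cap A^g)$ is a genuine pro-$p$ extension of a centralizer in the sense of \cite{KZ:11}, which is in general not a free pro-$p$ product of free abelian pro-$p$ groups (a double over such a subgroup produces, for instance, the pro-$p$ completion of a genus-two surface group). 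Your argument therefore has to exclude the possibility that an edge group is hyperbolic in the adjacent $G_{n-1}$-type vertex group, and the only mechanism in sight for doing so is again the acylindricity of the ambient profinite action.
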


One of the main properties of limit groups is commutative transitivity. This properties already fails for free profinite groups. The next theorem describes those that are commutativity-transitive, showing that this property for profinite groups is quite restrictive.

\begin{bigthm}\label{t:CT_main}
A commutativity-transitive profinite  group in the class $\mathcal Z$
is either pro-$p$ \textup{(}as described in Theorem \ref{t:fg_pro-p_main}\textup{)}
or abelian.
\end{bigthm}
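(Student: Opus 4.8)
The plan is to fix $G\in\mathcal Z$ commutativity-transitive and to prove that, unless $G$ is abelian, its set of prime divisors $\pi(G)$ is a singleton, so that $G$ is pro-$p$ and Theorem~\ref{t:fg_pro-p_main} gives the description. I will use two elementary facts throughout: commutativity transitivity is equivalent to every centralizer $C_G(x)$ of a non-trivial element being abelian, and this property is inherited by closed subgroups, since $C_H(x)=C_G(x)\cap H$. Assume therefore that $G$ is non-abelian; I must show $G$ is pro-$p$. By Theorem~\ref{t:virtsol_main}, $G$ is either abelian (excluded), or meta-procyclic projective, or it contains a non-abelian free pro-$p$ closed subgroup for some prime $p$. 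The first two possibilities are handled by showing they cannot be commutativity-transitive once non-abelian, which then forces the third.

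First I would dispose of the meta-procyclic projective case. Write $1\to N\to G\to G/N\to 1$ with $N$ and $G/N$ procyclic; non-abelianness means the conjugation action $G\to\operatorname{Aut}(N)$ is non-trivial (otherwise $N$ is central and, $G/N$ being procyclic, $G$ is abelian). Projectivity, i.e.\ $cd_r(G)\le1$ from Theorem~\ref{t:cohom_main}, forces the action to have finite image: an infinite image could only arise from a pro-$r$ subgroup acting on the $r$-primary part $\mathbb Z_r$ of $N$, producing a $\mathbb Z_r\rtimes\mathbb Z_r$ subquotient of cohomological $r$-dimension $2$, a contradiction. Hence $G/C_G(N)$ is finite. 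Now pick $g\in G\setminus C_G(N)$ and $n\in N$ with $[n,g]\ne1$; since $G$ is torsion-free and $G/C_G(N)$ is finite, a suitable power $k=g^{m}\ne1$ lies in $C_G(N)$. Then $[n,k]=1$ (as $k$ centralizes $N$) and $[k,g]=1$ (as $k$ is a power of $g$), while $[n,g]\ne1$, contradicting commutativity transitivity. So this case does not occur, and $G$ contains a non-abelian free pro-$p$ closed subgroup.

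It remains to rule out a second prime, and this is the crux. Suppose $|\pi(G)|\ge2$. Because $G$ has a non-abelian free pro-$p$ subgroup it acts non-trivially on the profinite tree underlying the iterated extension-of-centralizers tower, and hence contains hyperbolic elements; I would choose a hyperbolic $t$ whose procyclic centralizer $C_G(t)=\overline{\langle t\rangle}$ has support containing two distinct primes $p_1\ne p_2$, so that both components $t^{e_{p_1}},t^{e_{p_2}}$ are non-trivial. The decisive point is that the exponent $e_{p_1}\in\widehat{\mathbb Z}$ is a zero-divisor ($e_{p_1}e_{p_2}=0$), so the component $t^{e_{p_1}}$ need not stay hyperbolic: it becomes \emph{elliptic}, fixing a vertex, and therefore lies in a vertex stabilizer, a conjugate of a (possibly non-abelian) vertex group of the tower. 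One then computes that $C_G(t^{e_{p_1}})$ strictly contains the abelian group $\overline{\langle t\rangle}$ and is non-abelian --- precisely the mechanism by which commutativity transitivity already fails for free profinite groups (there $t^{e_{p}}$ has strictly smaller order and acquires non-commuting centralizing elements). This contradicts commutativity transitivity, forcing $\pi(G)=\{p\}$; thus $G$ is pro-$p$ and Theorem~\ref{t:fg_pro-p_main} applies.

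The hard part is the third paragraph: one must produce a mixed-support hyperbolic element and show that one of its prime components has a genuinely \emph{non-abelian} centralizer. This rests on the profinite Bass--Serre theory for the extension-of-centralizers construction: proving that a zero-divisor power of a hyperbolic element is elliptic, identifying the vertex group into which it falls, and checking --- via the malnormality condition $N_{G}(H)=C$ for $1\ne H\le C$ built into a profinite extension of a centralizer --- that the resulting centralizer really acquires non-commuting elements instead of collapsing back onto $\overline{\langle t\rangle}$, as it does in the clean elliptic case. Making this centralizer computation uniform along the tower, and guaranteeing that a mixed-support hyperbolic element is always available once $G$ is non-abelian and not pro-$p$, is where the substantive work lies.
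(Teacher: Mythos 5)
Your reduction via Theorem~\ref{t:virtsol_main}, and your elimination of the non-abelian meta-procyclic projective case, are broadly sound (one repair needed: the claim that $G/C_G(N)$ is finite fails when the support of $N=\mathbb{Z}_\pi$ is infinite, since the image of $\mathbb{Z}_\rho$ in $\prod_{r\in\pi}\operatorname{Aut}(\mathbb{Z}_r)$ can be infinite even though each factor image is finite; you should run the commutator argument on a single primary component $\mathbb{Z}_r$ on which the action is non-trivial, where the image genuinely is finite). The genuine gap is your third paragraph, which you yourself flag as ``where the substantive work lies'': it is not a proof but a proposed mechanism, and the mechanism is wrong. The Claim inside the proof of Theorem~\ref{t:completion} shows that for a hyperbolic element $\zeta$ one has $N_{\widehat\Gamma}(\overline{\langle g\rangle})=\overline{\langle\zeta\rangle}\cong\widehat{\mathbb{Z}}$ for \emph{every} non-trivial $g\in\overline{\langle\zeta\rangle}$, in particular for the prime component $g=\zeta^{e_{p_1}}$; since $C_{\widehat\Gamma}(g)\subseteq N_{\widehat\Gamma}(\overline{\langle g\rangle})$, that component has \emph{abelian} centralizer. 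So mixed-support hyperbolic elements do not witness a failure of commutativity transitivity, and the contradiction you are counting on never materialises. The actual source of CT failure (already in a non-abelian free profinite group) is elsewhere: elements of type $\mathbb{Z}_p$ sitting inside projective lifts of non-abelian finite quotients of composite order.

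For comparison, the paper argues by induction on the tower. In the base (projective) case it takes a non-abelian finite quotient $Q$ of composite order, embeds the universal Frattini cover $\widetilde Q$ into $G$ (projectivity splits $G\twoheadrightarrow\widetilde Q$), and uses CT together with the pronilpotent open subgroup $\Phi(\widetilde Q)$ to force $\widetilde Q$, hence $Q$, to be abelian; thus a projective CT group is procyclic or free pro-$p$. In the inductive step it sets $H=K\cap G$, where $K$ is the (projective, by a theorem of Zalesskii) normal closure of a procyclic direct factor $D_{n-1}$ of $A_{n-1}$; the base-case analysis leaves three possibilities for $H$, the first two are excluded by CT, and in the third $G$ embeds into $G_{n-1}\amalg_{C_{n-1}}(A_{n-1}/D_{n-1})$, decreasing $d(A_{n-1})$. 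Some argument of this kind --- locating the CT violation in projective subgroups rather than in components of hyperbolic elements --- is what your third paragraph is missing.
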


We prove  that the profinite completion of a limit group belongs to the class $\mathcal Z$; that gives a source of important examples.  Thus all above theorems are true for finitely generated subgroups of the profinite completion of a limit group.

\begin{bigthm} \label{t:pclg_main}
The profinite completion of a limit group belongs to the class $\mathcal Z$.
\end{bigthm}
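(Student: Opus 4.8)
The plan is to combine the discrete structure theory of limit groups with a goodness/efficiency argument showing that profinite completion commutes with the relevant amalgamated products. By the embedding theorems of Kharlampovich--Myasnikov and Sela (\cite{KhM:98a}, \cite{Sela:01}), a limit group $L$ is a finitely generated subgroup of a group $G$ built from a finitely generated free group $G_0=F$ by finitely many extensions of centralizers $G_{i+1}=G_i *_{C_i}(C_i\times\mathbb{Z}^{m_i})$, where $C_i=C_{G_i}(u_i)$. Since in a free group every centralizer is cyclic, and since enlarging the rank of an abelian vertex group can be absorbed into the cyclic extension that first created it (reordering floors if necessary), I would first normalize the tower so that every extended centralizer $C_i$ is maximal cyclic; this is what makes the completed edge groups procyclic, as demanded by the definition of a profinite extension of a centralizer. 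The tower $G$ is then a finitely generated limit group containing $L$, and it suffices to prove that $\widehat{G}$ is an iterated profinite extension of centralizers and that $\widehat{L}$ embeds as a finitely generated closed subgroup of $\widehat{G}$.

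For the first task I would induct on the height of the tower. The engine is that each amalgam $G_{i+1}=G_i *_{C_i} A_i$, with $A_i=C_i\times\mathbb{Z}^{m_i}$, is \emph{efficient}, so that $\widehat{G_{i+1}}=\widehat{G_i}\amalg_{\widehat{C_i}}\widehat{A_i}$ is a free profinite amalgamated product and completion commutes with the construction. By Wilton's theorem that finitely generated subgroups of limit groups are virtual retracts (\cite{Wilton:08}), the edge group $C_i$ and the vertex group $A_i$ are virtual retracts of $G_{i+1}$, so the full profinite topology is induced on each of them; together with the standard theory of profinite amalgams this yields efficiency and the injectivity of the edge and vertex completions. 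One then reads off the required structure: $\widehat{A_i}=\widehat{C_i}\times\widehat{\mathbb{Z}}^{m_i}$ is a finitely generated free abelian profinite group, and $\widehat{C_i}\cong\widehat{\mathbb{Z}}$ is a procyclic direct factor of $\widehat{A_i}$.

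It remains to promote the discrete isolation property $N_{G_i}(\langle u_i\rangle)=C_i$ to the profinite statement $N_{\widehat{G_i}}(H)=\widehat{C_i}$ for every non-trivial closed subgroup $H\le\widehat{C_i}$. For this I would use the action of $\widehat{G_{i+1}}$ on the profinite Bass--Serre tree of the amalgam: self-normalization and malnormality of the edge group at the discrete level translate, via the analysis of vertex and edge stabilizers on a profinite tree, into the control of normalizers of the procyclic edge group in the completion. Finally, since $L$ is a finitely generated subgroup of the limit group $G$, Wilton's virtual-retract theorem again gives that the profinite topology of $G$ induces the full profinite topology on $L$, so $\widehat{L}\hookrightarrow\widehat{G}$ is injective with closed image; being finitely generated and closed in the iterated profinite extension of centralizers $\widehat{G}$, the group $\widehat{L}$ lies in $\mathcal{Z}$.

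The hard part lies in the two profinite ingredients above. While separability and the induced-topology statements come for free from the virtual-retract property, assembling the efficiency of each extension-of-centralizers amalgam, and--most of all--verifying the normalizer condition for the procyclic edge groups in the completion, is where the real work is: the latter concerns closed subgroups $H\le\widehat{\mathbb{Z}}$ that have no discrete counterpart, so the argument must run entirely inside the profinite tree and admits no direct reduction to the discrete malnormality statement.
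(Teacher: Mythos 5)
Your outline follows the same architecture as the paper's proof of Theorem~\ref{t:completion}: realize the limit group inside an ICE tower, show the profinite completion of each floor is the corresponding proper profinite amalgam (the paper gets the induced-topology statements from \cite[Lemma~2.1]{RZ:96} rather than from retracts, but your route via Wilton's virtual retracts is equally viable for that step), embed $\widehat{L}$ as a finitely generated closed subgroup via \cite[Thm.~B]{Wilton:08}, and then verify the profinite normalizer condition. However, the last step --- which you yourself flag as ``where the real work is'' --- is exactly the content of the theorem, and your proposal does not actually contain an argument for it; ``self-normalization and malnormality at the discrete level translate, via the analysis of vertex and edge stabilizers, into control of normalizers in the completion'' is a statement of the goal, not a proof. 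This is a genuine gap, and it is worth recording what is needed to close it.

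Concretely, for a nontrivial $g\in\overline{Z}$ one must show $N_{\widehat{\Gamma}}(\overline{\langle g\rangle})=\overline{Z}$, and the paper does this in two stages, neither of which is a formal consequence of discrete malnormality. First, when the generator $\zeta$ of $Z$ is hyperbolic, one works with the Tits straight path $T_\zeta$ and its closure $\overline{T_\zeta}$ in the standard profinite tree (Proposition~\ref{prop:profinite_tits}); the normalizer $N$ preserves $\overline{T_\zeta}$ by minimality, acts \emph{freely} on it --- and here the $2$-acylindricity of the action (Lemma~\ref{l:2acyl}) is used in an essential way to embed the elliptic part of $N$ into $\operatorname{Aut}(T_\zeta)$ and kill it --- and then a covering-theory argument over the finite circuit $\langle\zeta\rangle\backslash T_\zeta$ identifies $N$ with $\widehat{\mathbb{Z}}$. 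Second, and crucially, knowing that $N$ is abelian only gives $N\subseteq C_{\widehat{\Gamma}}(\overline{Z})$; to conclude $N=\overline{Z}$ one needs the Chagas--Zalesskii result \cite[Prop.~3.5 and Lemma~3.3]{CZ:07} that $C_{\widehat{\Gamma}}(\overline{Z})=\overline{C_{\Gamma}(Z)}$, i.e.\ that centralizers do not grow under profinite completion of limit groups. Without this external input, your scheme cannot rule out that the procyclic subgroup $H\le\widehat{\mathbb{Z}}$ (say a $p$-Sylow $\mathbb{Z}_p$, which has no discrete counterpart) acquires a strictly larger abelian normalizer in the completion. Finally, the elliptic case ($\zeta$ fixing a vertex) is not addressed at all; the paper handles it by reducing to a conjugate of $\widehat{\Gamma_{n-1}}$ via \cite[Thm.~3.12]{ZM:88} and inducting on the height of the tower.
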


We also establish more specific properties for the profinite completion of limit groups:

\begin{bigthm} \label{t:homological_main}
Let $G$ be the profinite completion of a limit group. Then:
\begin{itemize}
\item[{\rm (a)}] 
$G$ is of homological type $\operatorname{FP}_\infty$ over $\mathbb{F}_p$ and over $\mathbb{Z}_p$.
\item[{\rm (b)}] 
the homological Euler-Poincar\'e characteristic of $G$ over $\mathbb{F}_p$ or $\mathbb{Z}_p$ 
coincides with the 
homological Euler-Poincar\'e characteristic $\chi(\Gamma)$ of $\Gamma$ over $\mathbb{Z}$; 
hence it is always $\leq 0$
and equals $0$ if, and only if, $\Gamma$ is abelian.
\item[{\rm (c)}]
if $U_1\supseteq U_2\supseteq U_3\supseteq \ldots$ is a 
descending sequence of open normal subgroups of $G$ 
with trivial intersection,
then
\[
\lim_{k\to\infty} 
\frac{\dim_{\mathbb{F}_p} {\rm H}_q(U_k,\mathbb{F}_p)}{(G:U_k)}
=
\lim_{k\to\infty} 
\frac{\dim_{\mathbb{F}_p} {\rm H}^q(U_k,\mathbb{F}_p)}{(G:U_k)}
=
\begin{cases} 
-\chi(\Gamma), & \textup{if } q=1, \\
\ \ \ \ 0,& \textup{otherwise} .
\end{cases}
\]
\end{itemize}
\end{bigthm}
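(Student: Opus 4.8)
The plan is to derive all three parts of Theorem~\ref{t:homological_main} from the structural results already established, chiefly Theorem~\ref{t:pclg_main} (which places $G=\widehat{\Gamma}$ in the class $\mathcal Z$) and Theorem~\ref{t:cohom_main} (which gives $G$ finite cohomological dimension). The underlying principle throughout is \emph{cohomological goodness} in the sense of Serre: a limit group $\Gamma$ is good, so that the continuous cohomology of $\widehat{\Gamma}$ with finite coefficients agrees with the abstract cohomology of $\Gamma$, i.e.\ the natural maps $\mathrm{H}^q(\widehat{\Gamma},M)\to \mathrm{H}^q(\Gamma,M)$ are isomorphisms for every finite module $M$. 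I would first record goodness of limit groups (this follows from the fact that they are built from free and free abelian groups by amalgamation along centralizers, and goodness is preserved under the relevant constructions), together with the fact that $\Gamma$ admits a finite classifying space and is of type $\mathrm{FP}_\infty$ over $\mathbb Z$.

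For part~(a), I would combine goodness with the finiteness of $\mathrm{cd}(G)$ from Theorem~\ref{t:cohom_main}. Since $\Gamma$ is $\mathrm{FP}_\infty$ over $\mathbb Z$ with finite cohomological dimension, each cohomology group $\mathrm{H}^q(\Gamma,\mathbb{F}_p)$ is finite-dimensional and vanishes beyond $\mathrm{cd}(G)$; goodness transports this finiteness to $\widehat{\Gamma}$, giving that $G$ is $\mathrm{FP}_\infty$ over $\mathbb{F}_p$. The passage from $\mathbb{F}_p$ to $\mathbb{Z}_p$ is then handled by a universal-coefficients / Bockstein argument: $\mathrm{FP}_\infty$ over $\mathbb{F}_p$ together with finite $\mathrm{cd}_p$ upgrades to $\mathrm{FP}_\infty$ over $\mathbb{Z}_p$, since the homology with $\mathbb{Z}_p$ coefficients is controlled degreewise by the $\mathbb{F}_p$ Betti numbers.

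For part~(b), the key observation is that the homological Euler--Poincar\'e characteristic is an alternating sum $\chi=\sum_q(-1)^q\dim_{\mathbb{F}_p}\mathrm{H}_q$, and by goodness each summand for $G$ equals the corresponding abstract Betti number of $\Gamma$; hence $\chi_{\mathbb{F}_p}(G)=\chi(\Gamma)$, and the $\mathbb{Z}_p$ characteristic agrees by the same universal-coefficients comparison used in~(a). That $\chi(\Gamma)\le 0$, with equality exactly for abelian $\Gamma$, is a known property of limit groups that I would invoke directly (it follows from the centralizer-extension description, free and free-abelian building blocks contributing $\chi\le 0$ with strict inequality once a non-abelian free factor appears). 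For part~(c), I would appeal to a profinite analogue of the L\"uck approximation theorem: for a residually finite group $\Gamma$ of type $\mathrm{FP}_\infty$ that is good, the normalized $\mathbb{F}_p$-Betti numbers of the finite-index subgroups $U_k$ converge to the $\ell^2$-type invariants, which for limit groups vanish in all degrees except degree~$1$, where the first $\ell^2$-Betti number equals $-\chi(\Gamma)$. The equality of the homology and cohomology limits follows from finiteness of each $\mathrm{H}_q(U_k,\mathbb{F}_p)$ and duality/self-duality of dimensions over the field $\mathbb{F}_p$.

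The main obstacle I anticipate is part~(c): establishing the approximation equality $\lim_k \dim_{\mathbb{F}_p}\mathrm{H}_q(U_k,\mathbb{F}_p)/(G:U_k)=b_q^{(2)}(\Gamma)$ in the profinite (mod-$p$) setting is delicate, since classical L\"uck approximation is an analytic statement over $\mathbb C$, whereas here we work with $\mathbb{F}_p$-coefficients and open subgroups of the profinite completion. I would route this through goodness to reduce the limit to abstract finite-index subgroups of $\Gamma$, then invoke an Approximation theorem for mod-$p$ homology growth (of the type available for $\mathrm{FP}_\infty$ groups admitting a suitable chain of subgroups), and finally identify the degree-$1$ limit with $-\chi(\Gamma)$ using the vanishing of higher $\ell^2$-Betti numbers of limit groups and the Euler-characteristic formula $\sum_q(-1)^q b_q^{(2)}=\chi(\Gamma)$.
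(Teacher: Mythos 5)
Your parts (b) and (c) essentially coincide with the paper's argument. For (c) the paper does exactly what you propose in your final paragraph: it uses goodness together with the universal coefficient theorem to identify $\dim_{\mathbb F_p}\mathrm H^q(U_k,\mathbb F_p)$ with $\dim_{\mathbb F_p}\mathrm H_q(N_k,\mathbb F_p)$, where $N_k$ is the preimage of $U_k$ in $\Gamma$, and then quotes the discrete mod-$p$ approximation theorem of Bridson--Kochloukova \cite[Cor.~B]{BK:17}; the ``delicate'' analytic issue you worry about is entirely absorbed into that citation, and the equality of the homology and cohomology limits is Pontryagin duality plus finiteness, as you say. For (b), your alternating-sum comparison of Betti numbers via goodness and universal coefficients is a legitimate alternative to the paper's computation (which reads $\chi_{\mathbb Z_p}(G)=\chi(\Gamma)$ off the ranks of an explicit resolution), and both invoke Kochloukova's discrete result for the sign statement.

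The genuine gap is in part (a). You assert that goodness ``transports finiteness'' of $\mathrm H^q(\Gamma,\mathbb F_p)$ to $\widehat\Gamma$ and that this ``gives that $G$ is $\operatorname{FP}_\infty$ over $\mathbb F_p$''. For a profinite group that is not pro-$p$, finiteness of the groups $\mathrm H^q(G,\mathbb F_p)$ --- or even of $\mathrm H^q(G,M)$ for all finite $M$ --- does not formally yield type $\operatorname{FP}_\infty$: minimal projective resolutions are not available over $\mathbb F_p[\![G]\!]$, and the topological Nakayama argument that makes this implication work for pro-$p$ groups has no direct analogue, so one cannot conclude that the successive kernels in a projective resolution are finitely generated merely from finiteness of (co)homology. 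This is precisely where the paper does its work: it takes a finite-type projective resolution $R_\bullet\to\mathbb Z$ over $\mathbb Z[\Gamma]$ (limit groups are of type $\operatorname{FP}$ over $\mathbb Z$ with finite cohomological dimension), base-changes it to $P_i=R_i\otimes_{\mathbb Z[\Gamma]}\mathbb F_p[\![\widehat\Gamma]\!]$, and proves that the resulting complex of finitely generated projectives is exact. Establishing that exactness is the entire content of Lemma~\ref{l:goody}: goodness gives $\varinjlim_N\mathrm H^q(N,M)=0$, and a duality argument (using that the $\mathrm H_q(N,\mathbb Z)$ are finitely generated) converts this into $\varprojlim_N\mathrm H_q(N,M)=0$, which is the homology of $P_\bullet$. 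Your proposal would need to supply an equivalent of this step; as written, the key finiteness assertion of (a) is stated rather than proved. Your subsequent passage from $\mathbb F_p$ to $\mathbb Z_p$ is fine in spirit and matches the paper's limit over $\mathbb Z/p^k\mathbb Z$.
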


The techniques involved in proving our 
theorems
consist of 
the profinite version of Bass-Serre theory of groups acting on trees
({\it e.g.,} \cite{R:17}, \cite{ZM:88} and \cite{ZM:89})
and homological methods
({\it e.g.,} \cite{Serre:94} and  
\cite{RZ:00b}).
The fact that the profinite completion of a limit group
belongs to our introduced class,
relies on the result established by {H. Wilton} 
\cite[Thm.~B]{Wilton:08} 
that limit groups virtually retract onto
each of its finitely generated subgroups,
and also on a result about centralizers
in the profinite completion of limit groups
obtained by {S. Chagas} and {P. Zalesskii}
\cite[Lemma~3.5]{CZ:07}.
Some facts of the 
combinatorial theory of profinite groups
are collected in 
\hyperref[s:prelim]{Section~\ref{s:prelim}}.

Theorems \ref{t:cohom_main}, \ref{t:virtsol_main}, \ref{t:CT_main}, 
and \ref{t:pclg_main} 
were proved for our new class of profinite groups
in the Ph.D thesis \cite[Cap. 2]{tese}.

In the final section of the paper we comment
on group-theoretical properties of groups
belonging to the class of limit groups, 
to the class of pro-$p$ groups studied 
by Kochloukova and Zalesskii
\cite{KZ:11},
and to the presently defined class of profinite
groups via extensions of centralizers.

\medskip

\noindent {\bf Notation.}
Throughout this paper, $p$ is a prime number.
The field of $p$ elements is denoted by
${\mathbb F}_p$.
The additive group of the ring of $p$-adic integers is ${\mathbb Z}_p$.
If $\pi$ is a set of prime numbers, then ${\mathbb Z}_\pi$ denotes the
cartesian product $\prod_{p\in \pi}{\mathbb Z}_p$;
the set of all prime numbers but $p$
is denoted by $p'$.

For a subset $A$ of a topological group $G$ 
we denote by $\langle A\rangle$ 
the (abstract) subgroup of $G$ generated by $A$
and by $\overline{A}$ the topological closure of $A$ in $G$;
the subgroup of $G$ topologically generated by $A$ is
$\overline{\langle A\rangle}$.
By $d(G)$ we denote the smallest cardinality of a
(topological) generating subset of a profinite group $G$.
The profinite completion of a discrete group ${\Gamma}$ is $\widehat{\Gamma}$.

For a profinite group $G$ acting continuously
on a space $X$ 
we denote 
the {\em point stabilizer} 
for each $x$ in $X$
by $\operatorname{stab}_G(x)$.

Henceforth,
we use capital greek letters for discrete groups
and capital roman letters for profinite groups.
The rest of our notation is very standard and basically follows
\cite{RZ:00b}
and
\cite{RZ:00a}.


\phantomsection
\section{Preliminary results}
\label{s:prelim}

In this section we collect a few 
results on the combinatorial theory of profinite groups
used in the paper.
Further information on this subject can be found in 
\cite{RZ:00b}, \cite{RZ:00a},
and
\cite{R:17}.
For the cohomology theory of profinite groups
we refer to 
\cite[Ch. I]{Serre:94} and 
\cite[Ch. 6 and 7]{RZ:00b}.

An amalgamated free profinite product 
$G=G_1\amalg_{C} G_2$ is {\em proper}
if 
the canonical maps $G_1\to G$ and $G_2\to G$ are injective.
We shall make use of the fact proved by 
{L.~Ribes}
\cite[Thm.~2.3]{Ribes:71}
({\it cf.} \cite[Exerc. 9.2.6]{RZ:00b})
that
\emph{an amalgamated free profinite product
is proper, whenever the amalgamating subgroup is
central in one of the amalgamated free factors}.

\begin{prop}[\protect{\cite[Cor. 2.7(ii)]{RZ:96}}]
\label{prop:eqnorm-rz96} 
Let $G_1$ and $G_2$ be profinite groups
with a common procyclic subgroup
$C$ such that $G=G_1\amalg_{C} G_2$ is proper.
Let $c\in C$.
The following relation among normalizers holds,
$N_{G}(\overline{\langle c\rangle}) = 
N_{G_1}(\overline{\langle c\rangle})\amalg_{C} 
N_{G_2}(\overline{\langle c\rangle})$.
\end{prop}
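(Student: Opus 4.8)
The plan is to study $N:=N_G(H)$, where $H:=\overline{\langle c\rangle}$, through the action of $G$ on its standard profinite tree and to extract the amalgam from the subtree of points fixed by $H$. We may assume $H\neq 1$, as otherwise $N=G=G_1\amalg_C G_2$ and there is nothing to do. Since $C$ is procyclic, hence abelian, it centralizes $H$, so $C\le C_G(H)\le N$; in particular $C$ lies in both $N_{G_1}(H)$ and $N_{G_2}(H)$, so the right-hand amalgam is well posed. Let $T$ be the standard profinite tree of the proper decomposition $G=G_1\amalg_C G_2$: the quotient $G\backslash T$ is a single edge $e$ with endpoints $v_1,v_2$, with $\operatorname{stab}_G(v_i)=G_i$ and $\operatorname{stab}_G(e)=C$. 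As $H\le C=\operatorname{stab}_G(e)$, the fixed-point set $D:=T^H$ contains $e$; I will use the standard fact that the non-empty fixed-point set of a closed subgroup acting on a profinite tree is again a profinite subtree.

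Next I would observe that $D$ is $N$-invariant: for $n\in N$, $x\in D$ and $h\in H$ one has $h\cdot(nx)=n(n^{-1}hn)x=nx$, since $n^{-1}hn\in H$ fixes $x$. Thus $N$ acts on the profinite tree $D$, and the point stabilizers are exactly the groups we seek, namely $\operatorname{stab}_N(v_i)=N\cap G_i=N_{G_i}(H)$ and $\operatorname{stab}_N(e)=N\cap C=C$. By the profinite version of the structure theorem for groups acting on profinite trees, a profinite group acting on a profinite tree with quotient a single edge is the proper amalgamated free profinite product of the two vertex stabilizers over the edge stabilizer; properness is then automatic. Hence the whole proposition reduces to showing that $N\backslash D$ is a single edge, i.e. that $N$ acts on $D$ transitively on edges (the two vertex orbits $Nv_1,Nv_2$ then follow from connectivity of $D$ together with edge-transitivity).

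The heart of the matter, and what I expect to be the main obstacle, is this edge-transitivity. An edge $ge$ (with $g\in G$) lies in $D$ exactly when $H\le gCg^{-1}$, that is $g^{-1}Hg\le C$, and it lies in the $N$-orbit of $e$ exactly when $g\in NC=N$, that is $g^{-1}Hg=H$. So everything comes down to the conjugacy-control statement: \emph{every $G$-conjugate of $H$ that is contained in $C$ coincides with $H$}. This is precisely where procyclicity of $C$ is used. I would prove it by passing to finite quotients: if $g^{-1}Hg\le C$, then in every finite quotient $G/U$ (with $U$ open normal) the images of $H$ and of $g^{-1}Hg$ are conjugate, hence of equal order, and both lie in the cyclic image of $C$; since a finite cyclic group has a unique subgroup of each order, these images coincide. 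Letting $U$ range over a base of neighbourhoods of $1$ gives $g^{-1}Hg=H$. With edge-transitivity established, $N\backslash D$ is the single edge $e$, and the structure theorem yields the proper amalgam $N=N_{G_1}(H)\amalg_C N_{G_2}(H)$, as required.
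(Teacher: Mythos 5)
The paper does not prove this proposition at all: it is imported verbatim from \cite[Cor.\ 2.7(ii)]{RZ:96}, so there is no internal proof to compare against. Your argument is, in essence, a correct reconstruction of the proof in that reference: act on the standard tree, pass to the fixed subtree $D=T^{H}$ of $H=\overline{\langle c\rangle}$, note $D$ is $N$-invariant with the right stabilizers, and reduce everything to the conjugacy-control statement that a $G$-conjugate of $H$ lying in $C$ must equal $H$. Your finite-quotient argument for that last point (conjugate subgroups have equal order in each finite quotient, and a finite cyclic group has a unique subgroup of each order, then intersect over all open normal $U$) is exactly where procyclicity enters, and it is sound. Two steps deserve more than a wave of the hand. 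First, the ``structure theorem'' you invoke --- that a profinite group acting on a profinite tree with quotient a single edge is the amalgam of the vertex stabilizers over the edge stabilizer --- is genuinely nontrivial in the profinite setting (unlike for abstract trees, generation by the vertex stabilizers is not immediate from connectedness); it needs a citation to \cite{ZM:89} or \cite[Ch.\ 6]{R:17}. Second, deducing $V(D)=Nv_1\cup Nv_2$ from edge-transitivity requires knowing that every vertex of $D$ other than the endpoints of $e$ is incident to some edge of $D$; this follows because the smallest subtree joining two distinct $H$-fixed points is itself $H$-fixed and each of its vertices meets an edge (\cite[Prop.\ 2.15]{ZM:88}, which the paper uses elsewhere for exactly this purpose), but it is not literally ``connectivity of $D$'' in the naive sense. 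With those two citations supplied, the proof is complete and coincides with the one in the quoted source.
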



We assume basic concepts and facts of the profinite version 
of Bass-Serre theory as developed by Mel$'$nikov, Ribes, and Zalesskii
\cite{ZM:88}, \cite{ZM:89}, and \cite{R:17}; 
in particular, we only mention here that a profinite tree is
an acyclic connected profinte graph. 
For our purposes we emphasize the following two results.
The first is a sort of Tits alternative;
the second has no abstract analogue (here the compactness of the tree is essential).

\begin{thm}[\protect{\cite[Thm. 3.1]{Zalesskii:90}}]
\label{teo:caract-zal90} 
Let $G$ be a profinite group acting on a profinite tree $T$.
Then one of the following holds: 
\begin{itemize}
\item[{\rm (a)}] 
$G$ stabilizes some vertex of $T$. 
\item[{\rm (b)}] 
$G$ has a non-abelian free pro-$p$ subgroup $P$
such that $P\cap\operatorname{stab}_G(v)=\{1\}$,
for each vertex $v$ of $T$.
\item[{\rm (c)}] 
There exists an edge $e$ of $T$, 
whose stabilizer $\operatorname{stab}_G(e)$ is normal in $G$
and $G/\operatorname{stab}_G(e)$ is isomorphic to
one of the following profinite groups: 
{\rm (c.1)} a projective group
$\mathbb{Z}_\pi\rtimes \mathbb{Z}_\rho$,
where $\pi \cap\rho=\varnothing$; 
{\rm (c.2)} a Frobenius group $\mathbb{Z}_\pi\rtimes\mathbb{Z}/m\mathbb{Z}$, 
where $m$ is not divisible by any prime of $\pi$
and $[k,c]\ne1$ for each $k\in\mathbb{Z}_{\pi}\!-\!\{0\}$
and each $c\in\mathbb{Z}/m\mathbb{Z}\!-\!\{0\}$;
{\rm (c.3)} the infinite dihedral pro-$\pi$ group
$\mathbb{Z}/ 2\mathbb{Z}\amalg^\pi \mathbb{Z}/2\mathbb{Z}$
isomorphic to
$\mathbb{Z}_\pi \rtimes \mathbb{Z}/2\mathbb{Z}$, 
where $2\in\pi$.
\end{itemize}
\end{thm}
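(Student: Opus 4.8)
The plan is to run a dichotomy on whether $G$ has a global fixed point, and in the remaining case to separate a ``large'' alternative (producing a free pro-$p$ subgroup) from a ``small'' one (producing the metabelian quotients of (c)). If $G$ stabilizes a vertex we are in case (a), so assume from now on that $G$ fixes no vertex of $T$. The first reduction is to replace $T$ by a minimal nonempty closed $G$-invariant subtree $D$; its existence is a compactness (Zorn) argument in the profinite category, and it is here that compactness of $T$ is used decisively. On $D$ the action is irreducible, and since stabilizers of vertices and edges of $D$ are stabilizers of the corresponding elements of $T$, the three conclusions for $D$ transfer back to $T$.

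Second, I would isolate the free pro-$p$ alternative through Sylow theory. For each prime $p$, pick a Sylow pro-$p$ subgroup $G_p$ of $G$; it acts on the profinite tree $T$. The crucial input is the pro-$p$ case, available from profinite Bass--Serre theory (\cite{ZM:88}, \cite{ZM:89}, \cite{R:17}): a pro-$p$ group acting on a pro-$p$ tree either fixes a vertex or contains a non-abelian free pro-$p$ subgroup acting freely. If some $G_p$ fixes no vertex, this yields a non-abelian free pro-$p$ subgroup $P\le G_p$ acting freely; since $P\le G_p$, freeness gives $P\cap\operatorname{stab}_G(v)=\{1\}$ for every vertex $v$, which is exactly case (b). Hence I may assume every Sylow subgroup of $G$ fixes a vertex of $T$, in which case the action is forced to be small and the goal becomes to exhibit the structure in (c).

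Third, in the small case I would produce an edge $e$ whose stabilizer $S=\operatorname{stab}_G(e)$ is normal in $G$ and then read off $G/S$ from the associated graph-of-groups decomposition. The edge stabilizers here are procyclic-type and tightly constrained by irreducibility, so a normalizer computation in the spirit of Proposition \ref{prop:eqnorm-rz96}, controlled by compactness, collapses their conjugates and allows one to take $S\trianglelefteq G$. The quotient $\bar G=G/S$ then acts on the associated quotient profinite tree, and the absence of non-abelian free pro-$p$ subgroups forces its fundamental decomposition to be extremely degenerate. A segment-type decomposition, i.e. a free profinite product of two vertex groups, can avoid free pro-$p$ subgroups only for $\mathbb{Z}/2\mathbb{Z}\amalg\mathbb{Z}/2\mathbb{Z}$, giving the infinite dihedral pro-$\pi$ group of (c.3); the remaining loop-type (HNN) decompositions, after accounting for the way the normal edge stabilizer sits inside $G$, yield the metabelian semidirect products $\mathbb{Z}_\pi\rtimes\mathbb{Z}_\rho$ and $\mathbb{Z}_\pi\rtimes\mathbb{Z}/m\mathbb{Z}$ of (c.1) and (c.2), where the coprimality $\pi\cap\rho=\varnothing$, the Frobenius condition $[k,c]\neq1$, and the requirement $2\in\pi$ are precisely the arithmetic constraints making the action faithful, irreducible, and fixed-point-free.

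The main obstacle is this final classification. Two points need genuine work: first, showing that the edge stabilizer can be taken normal, which rests on controlling the conjugacy of procyclic edge stabilizers via normalizers in amalgams (Proposition \ref{prop:eqnorm-rz96}) and on compactness to prevent infinitely many distinct conjugates; and second, proving that the only profinite groups admitting a faithful irreducible fixed-point-free action on a profinite tree with no non-abelian free pro-$p$ subgroup are the three metabelian families listed, which requires a careful case analysis of degenerate graphs of profinite groups and a verification that the displayed arithmetic side-conditions are exactly those compatible with the action. Compactness of $T$ enters throughout --- in the existence of $D$ and in bounding the combinatorics of the quotient graph --- and is what makes this profinite trichotomy sharper than its abstract Bass--Serre analogue.
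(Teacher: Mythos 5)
This statement is not proved in the paper at all: it is imported verbatim from \cite[Thm.~3.1]{Zalesskii:90} and used as a black box, so there is no internal proof to compare your attempt against. Judged on its own, your outline has one step that is genuinely wrong and one that is essentially the whole theorem deferred rather than proved. The wrong step is the Sylow dichotomy. It is not true that a pro-$p$ group acting on a pro-$p$ tree either fixes a vertex or contains a non-abelian free pro-$p$ subgroup acting freely: the group $\mathbb{Z}_p$ acting by translations on its Cayley pro-$p$ tree fixes no vertex and contains no non-abelian subgroup whatsoever. The correct pro-$p$ statement is itself a trichotomy (fixed vertex, free non-abelian pro-$p$ subgroup, or a normal edge stabilizer with quotient $\mathbb{Z}_p$ or infinite pro-$2$ dihedral), so your inference ``some $G_p$ fixes no vertex $\Rightarrow$ case (b)'' fails, and with it the clean separation of (b) from (c). (The other direction --- every Sylow subgroup fixes a vertex rules out (b) --- is fine, since a free pro-$p$ subgroup as in (b) would lie in a conjugate of some $G_p$ and hence in a vertex stabilizer.)

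The second problem is that everything you correctly identify as ``the main obstacle'' --- producing a normal edge stabilizer and classifying the faithful, irreducible, fixed-point-free actions without non-abelian free pro-$p$ subgroups into exactly the families (c.1)--(c.3) with their arithmetic side conditions --- is the actual content of Zalesskii's theorem, and your sketch offers only the assertion that ``a normalizer computation collapses the conjugates'' and that the degenerate decompositions ``yield'' the listed groups. In the source the normal subgroup arises as the kernel of the action on the minimal invariant subtree, which is then shown to coincide with an edge stabilizer, and the classification of the quotient is a substantial case analysis; none of this is recoverable from the steps you give. So the proposal is a plausible plan with a false intermediate lemma and the decisive arguments missing, not a proof.
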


\begin{prop}[\protect{\cite[Lemma 1.5]{Zalesskii:90} 
complemented by \cite[Prop.~19]{Serre:77}}] \label{prop:invar-zal90} 
Let~$G$ be a profinite group acting on a profinite tree $T$.
There exists a \textup{(}non-empty\textup{)} minimal $G$-invariant profinite subtree
$D$ of $T$.
Moreover, if $D$ has more than one element, then $D$ is unique and infinite.
\end{prop}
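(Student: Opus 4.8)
The plan is to obtain existence by a compactness-and-Zorn argument, and then to import Serre's discrete ``bridge'' argument into the profinite setting for the uniqueness and infiniteness clauses. To begin, I would consider the family $\mathcal{D}$ of all non-empty closed $G$-invariant profinite subtrees of $T$, partially ordered by reverse inclusion; it is non-empty since $T\in\mathcal{D}$. Given a chain $(D_i)_i$ in $\mathcal{D}$, its intersection $D=\bigcap_i D_i$ is $G$-invariant and closed, and is non-empty because the $D_i$ are non-empty closed subsets of the compact space $T$ with the finite intersection property. The one substantive point is that $D$ is again a profinite subtree, i.e.\ that connectedness is preserved under such intersections; this is precisely where the homological definition of a profinite tree is needed, and is the content drawn from \cite[Lemma~1.5]{Zalesskii:90}. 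With that in hand, every chain has a lower bound, so Zorn's Lemma produces a minimal element $D$ of $\mathcal{D}$, giving the first assertion.

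For uniqueness, suppose the minimal subtree $D$ has more than one element and let $D'$ be any minimal $G$-invariant profinite subtree. If $D\cap D'\ne\varnothing$, then $D\cap D'$ is a non-empty closed $G$-invariant subtree contained in both, so minimality forces $D=D\cap D'=D'$. The remaining case $D\cap D'=\varnothing$ I would rule out using the structure of profinite trees: there is a unique smallest subtree joining $D$ and $D'$, meeting $D$ in a single vertex $v$. Since $G$ stabilizes both $D$ and $D'$, each $g\in G$ sends this bridge to the bridge between $gD=D$ and $gD'=D'$, which by uniqueness is the same bridge; hence $gv=v$ for all $g\in G$. Then $\{v\}$ is a $G$-invariant subtree strictly smaller than $D$, contradicting minimality. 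Thus $D\cap D'\ne\varnothing$ and $D=D'$, so the minimal subtree is unique.

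Finally, for infiniteness, suppose $D$ were finite with more than one element, so that $D$ is an ordinary finite tree. Its center is canonical---either a single vertex or a single edge---and hence invariant under every automorphism, in particular under the restriction to $D$ of each $g\in G$. If the center is a vertex $v$, then $\{v\}\subsetneq D$ is $G$-invariant, contradicting minimality; if it is an edge, then under the standing no-inversion convention (or after passing to the barycentric subdivision) $G$ fixes an endpoint, and the same contradiction applies. Hence $D$ is infinite. The main obstacle in all of this is not the combinatorics, which is essentially Serre's, but the two profinite inputs---that intersections of profinite subtrees remain connected, and that unique bridges exist between disjoint profinite subtrees---both of which rest on the homological characterization of profinite trees and on the compactness of $T$ rather than on any naive point-set reasoning.
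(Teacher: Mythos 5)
The paper offers no proof of this proposition at all: it is quoted from \cite[Lemma~1.5]{Zalesskii:90} together with the profinite analogue of \cite[Prop.~19]{Serre:77}, so there is nothing in-paper to compare against, and your reconstruction (Zorn plus compactness for existence; the intersection-versus-bridge dichotomy for uniqueness; the centre of a finite tree for infiniteness) is precisely the standard argument from those sources and is sound. The two points you explicitly defer --- that intersections of profinite subtrees are again profinite subtrees, and that two disjoint profinite subtrees are joined by a unique geodesic meeting each of them in a single vertex --- are indeed the genuinely non-trivial profinite inputs (the second especially, since such a geodesic need not be a finite path), and both are established in the Zalesskii--Mel$'$nikov/Ribes theory; granting them, $G$ preserves the bridge and hence fixes its endpoint in $D$, so your contradiction with minimality goes through. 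One cosmetic remark: in the existence step you order $\mathcal{D}$ by reverse inclusion but then speak of lower bounds and minimal elements; just order by inclusion and note that every descending chain is bounded below by its (non-empty, connected) intersection.
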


In order to prove the main result of the following section,
we recall the concept of the 
\emph{Tits straight path}
(\textit{cf.} \cite[Prop.~3.2, p.~192]{Tits:70} 
or 
\cite[Prop.~24]{Serre:77}). 
Let $X$ be an abstract tree
and denote its (canonical) length function by $l$
and its vertex set by $V(X)$.
Let $\alpha$ be a fixed-point-free automorphism of $X$.
The set of all vertices $v$ of $X$ such that
$l(v,\alpha(v))=\inf_{w\in V(X)} l(w,\alpha(w))$
constitute the vertex set of a unique straight path
(\textit{i.e.}, a doubly infinite chain) $T_\alpha$;
this subtree $T_\alpha$ is called the
{Tits straight path} of $\alpha$.
If $[v,w]$ denotes the unique path joining vertices $v$ and $w$ in a tree
and $[v,w[\,=[v,w]-\{w\}$, 
then ${T_\alpha}={\langle \alpha\rangle}[v,\alpha v[\,$
for any $v$ in $T_g$.

For the reader's convenience the next proposition collects fundamental facts 
in the profinite usage of the Tits straight path
(\textit{cf.} \cite[proofs of Lemma~2.8 and Prop.~2.9]{RZ:96};
\cite[Example~1.20A]{ZM:88} and the previous
\hyperref[prop:invar-zal90]{Proposition~\ref{prop:invar-zal90}};
and \cite[proof of Lemma~4.3]{RSZ:98}).

\begin{prop}
\label{prop:profinite_tits}
Let $\Gamma= A *_Z B$ be a free product with 1-generated amalgamation.
Assume $\Gamma$ is residually finite, $\Gamma$ induces the profinite topology on $A$, $B$, and $Z$, and $A$, $B$, and $Z$ are closed in the profinite topology of $\Gamma$.
Let $\gamma$ be an element of $\Gamma$ not belonging to a conjugate of either $A$ or $B$
\textup{(}in other words, $\gamma$ is hyperbolic\textup{)}.
Let $T_\gamma$ be the Tits straight path of $\gamma$ in 
the standard tree $S$ on which $\Gamma$ acts,
and let $\overline{T_\gamma}$ be its closure in 
the standard profinite tree $\widehat{S}$ on which $\widehat{\Gamma}$ acts.
Then:
\begin{itemize}
\item[{\rm (a)}] 
$\overline{T_\gamma}=\overline{\langle \gamma\rangle}[v,\gamma v[\,$
for any $v$ in $T_\gamma$; 
\item[{\rm (b)}]
$\overline{T_\gamma}$ is the unique 
minimal $\overline{\langle \gamma\rangle}$-invariant profinite subtree
of $\widehat{S}$; 
\item[{\rm (c)}] 
$T_\gamma$ is a connected component of $\overline{T_\gamma}$ 
considered as as abstract graph, in other words, 
the only vertices of $\overline{T_\gamma}$  that are at a finite distance 
from a vertex of $T_\gamma$ are those of $T_\gamma$.
\end{itemize}
\end{prop}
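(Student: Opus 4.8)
The plan is to treat the three items in order, using the explicit description in (a) as the engine for (b) and (c). For (a) I would argue by compactness. Classically $T_\gamma=\langle\gamma\rangle[v,\gamma v[$, where the half-open segment $[v,\gamma v[$ is a \emph{finite} subgraph of the standard tree $S$. Since $\widehat\Gamma$ acts continuously on $\widehat S$, the orbit map $\overline{\langle\gamma\rangle}\times[v,\gamma v[\ \to\widehat S$ is continuous with compact domain, so its image $\overline{\langle\gamma\rangle}[v,\gamma v[$ is compact, hence closed in the Hausdorff space $\widehat S$. This closed set contains $\langle\gamma\rangle[v,\gamma v[\,=T_\gamma$, so it contains $\overline{T_\gamma}$; conversely, writing any $g\in\overline{\langle\gamma\rangle}$ as a limit $g=\lim_k\gamma^{n_k}$ (the cyclic group is dense in its procyclic closure) and using continuity of the action gives $gw=\lim_k\gamma^{n_k}w\in\overline{T_\gamma}$ for every $w\in[v,\gamma v[$, which yields the reverse inclusion and hence the equality in (a).

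For (b), note first that $\overline{T_\gamma}$ is a profinite subtree of $\widehat S$, being the closure of the subtree $T_\gamma$ (closures of connected subgraphs are connected, and acyclicity passes to profinite subgraphs of a profinite tree; \emph{cf.} \cite{ZM:88}); by (a) it is $\overline{\langle\gamma\rangle}$-invariant, and it has more than one element since $T_\gamma$ is infinite. I claim it is minimal. Indeed, if $\varnothing\ne D\subseteq\overline{T_\gamma}$ is a closed $\overline{\langle\gamma\rangle}$-invariant subtree, then a vertex of $D$ has the form $gw_0$ with $g\in\overline{\langle\gamma\rangle}$ and $w_0\in[v,\gamma v[$, so $D\supseteq\overline{\langle\gamma\rangle}w_0\supseteq\{\gamma^{n}w_0:n\in\mathbb Z\}$; as these vertices lie on the discrete axis and a profinite subtree contains the geodesic between any two of its vertices, the consecutive tiles $[\gamma^{n}w_0,\gamma^{n+1}w_0]$ all lie in $D$, whence $T_\gamma\subseteq D$ and $D=\overline{T_\gamma}$. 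Being a minimal $\overline{\langle\gamma\rangle}$-invariant profinite subtree with more than one element, $\overline{T_\gamma}$ is, by Proposition~\ref{prop:invar-zal90}, the unique minimal such subtree (and is confirmed infinite), which is (b).

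Part (c) is where I expect the real difficulty, because $\widehat S$ need not be locally finite and a vertex of $\overline{T_\gamma}$ can have profinitely many neighbours, so one cannot argue naively neighbour-by-neighbour. My plan is to build a coherent \emph{level} function detecting membership in the discrete axis. Using the tiling $T_\gamma=\bigcup_{n\in\mathbb Z}\gamma^{n}[v,\gamma v[$, the assignment $\gamma^{n}w\mapsto\gamma^{n}$ is $\langle\gamma\rangle$-equivariant, and I would extend it to a continuous $\overline{\langle\gamma\rangle}$-equivariant map $\ell\colon\overline{T_\gamma}\to\overline{\langle\gamma\rangle}$ (with $\langle\gamma\rangle\cong\mathbb Z$ dense in the procyclic group $\overline{\langle\gamma\rangle}$) sending the discrete vertices of $T_\gamma$ exactly to $\langle\gamma\rangle$ and the extra limit vertices to $\overline{\langle\gamma\rangle}\setminus\langle\gamma\rangle$. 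The two endpoints of any edge of $\overline{T_\gamma}$ then have levels differing by one of the finitely many values realised on the fundamental domain $[v,\gamma v[$, so along a finite abstract path of length $L$ the level moves only by a product of $L$ such elements, i.e.\ by an element of $\langle\gamma\rangle$; hence any vertex at finite abstract distance from a vertex of $T_\gamma$ has level in $\langle\gamma\rangle$ and therefore lies in $T_\gamma$. This reduces (c) to the model fact that, in the abstract graph on $\overline{\langle\gamma\rangle}$ in which $g$ is joined to $g\gamma^{j}$ for $j$ in a fixed finite set, the connected components are the cosets of $\langle\gamma\rangle$, the one through the identity being $T_\gamma$ itself. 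The technical points I would still have to discharge are the well-definedness and continuity of $\ell$ — which require controlling the $\overline{\langle\gamma\rangle}$-stabilisers of the vertices of the fundamental domain, using that $\gamma$ is hyperbolic and that the hypotheses (the profinite topology induced on $A$, $B$, $Z$ and their closedness) keep $\gamma$ hyperbolic on $\widehat S$ — together with the bounded-variation estimate along edges; these are precisely the computations carried out in the proofs cited from \cite{RZ:96} and in \cite[Lemma~4.3]{RSZ:98}, whose template I would follow.
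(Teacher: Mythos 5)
The paper does not actually give a proof of this proposition: it is presented as a collection of known facts, with the arguments outsourced to \cite[proofs of Lemma~2.8 and Prop.~2.9]{RZ:96}, \cite[Example~1.20A]{ZM:88} together with Proposition~\ref{prop:invar-zal90}, and \cite[proof of Lemma~4.3]{RSZ:98}; so there is no in-paper proof to measure you against, and I assess your reconstruction on its own. Parts (a) and (b) are correct and are essentially the arguments of those sources: compactness of $\overline{\langle\gamma\rangle}\cdot[v,\gamma v[\,$ plus density of $\langle\gamma\rangle$ gives (a), and the observation that any $\overline{\langle\gamma\rangle}$-invariant profinite subtree of $\overline{T_\gamma}$ must contain all $\gamma^n w_0$, hence the whole discrete axis and therefore its closure, gives minimality, with uniqueness then supplied by Proposition~\ref{prop:invar-zal90}. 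For (c) your level function $\ell$ is a legitimate repackaging of the argument in \cite{RZ:96} and \cite{RSZ:98} (there one shows directly that an edge of $\overline{T_\gamma}$ with an endpoint on $T_\gamma$ is a $\langle\gamma\rangle$-translate of an edge of $[v,\gamma v]$ and inducts on distance), but, as you say yourself, everything hinges on the one point you have not discharged: that $\overline{\langle\gamma\rangle}\times[v,\gamma v[\;\to\overline{T_\gamma}$ is a bijection, equivalently that $\overline{\langle\gamma\rangle}$ acts freely on $\overline{T_\gamma}$ and distinct points of the fundamental segment lie in distinct $\overline{\langle\gamma\rangle}$-orbits. This is exactly where the hypotheses (residual finiteness, $A$, $B$, $Z$ closed and carrying the induced profinite topologies) are consumed: one must show $\overline{\langle\gamma\rangle}\cap x\widehat{A}x^{-1}=\overline{\langle\gamma\rangle}\cap x\widehat{B}x^{-1}=1$, i.e.\ that $\gamma$ remains hyperbolic on $\widehat{S}$, which is obtained by separating the translates $\gamma^{n}[v,\gamma v[\,$ in suitable finite quotients. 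Since you flag this explicitly and the paper itself delegates precisely this computation to the same references, your proposal is at the same level of completeness as the paper's treatment; to make it self-contained you would need to write out that freeness argument.
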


\phantomsection
\section{Profinite extensions of centralizers}
\label{s:class_Z}

Some properties of limit groups immediately pass to their profinite completion.
For instance, say $\widehat{\Gamma}$ is the profinite completion of a limit group.
If $\widehat{\Gamma}$ is virtually soluble, then it must be abelian;
each open subgroup of $\widehat{\Gamma}$ is finitely presented (as a profinite group); 
the group $\widehat{\Gamma}$ has infinite continuous abelianization 
if ${\Gamma}$ is non-trivial.

The present work introduces 
a class of profinite groups 
defined via extensions of centralizers
analogous to limit groups.
We provide some motivation to define such class.

An extension of centralizer of a discrete group $\Gamma$ 
is an amalgamated free product $\Gamma*_Z (Z\times B)$
where $Z$ is the centralizer of an element of $\Gamma$ and 
$B$ is a free abelian group of finite rank.

In the discrete case,
the centralizer of each non-trivial element in a free group is 
infinite-cyclic
and, after performing an extension of centralizer,
the centralizer of the element becomes abelian.
In the profinite case,
the centralizer of each 
non-trivial element that generates $\widehat{\mathbb{Z}}$
in a free profinite group is meta-procyclic
and, after performing an extension of centralizer,
the centralizer becomes either meta-abelian,
or (non-trivial procyclic)-by-(infinite dihedral pro-$\pi$),
or contains a non-abelian free pro-$p$ subgroup.
See 
\hyperref[l:sylow_cyclic]{Lemma~\ref{l:sylow_cyclic}}
and
\hyperref[t:virtsol]{Theorem~\ref{t:virtsol}},
and 
\hyperref[p:centraliz.]{Propostition~\ref{p:centraliz.}}.

Our construction is now introduced.
Let $\mathcal C$ be a non-empty class of finite groups
closed 
with respect to taking
subgroups, quotients,
and finite direct products.
We inductively define the following classes of pro-$\mathcal C$ groups:

{\itshape
\begin{quote}
The class
$\mathcal{Z}_0(\mathcal{C})$ 
of all
free pro-$\mathcal C$ groups of finite rank.

\noindent
For each integer $n>0$, the class $\mathcal{Z}_n(\mathcal{C})$ 
of all
groups $G_n$,
such that $G_n$ is the 
amalgamated free pro-$\mathcal C$ product
$G_{n-1}\amalg_{C_{n-1}} A_{n-1}$ satisfying the following conditions:
\begin{itemize}
\item[{\bf A1.}] 
$C_{n-1}$ is a procyclic proper 
subgroup and a direct factor
of a free abelian pro-$\mathcal C$ group of any finite rank $A_{n-1}$. 
\item[{\bf A2.}] \label{d:(ii)}
$C_{n-1}$ is a closed subgroup of the group 
$G_{n-1}$ of $\mathcal{Z}_{n-1}(\mathcal{C})$ such that
the normalizer in $G_{n-1}$ of each non-trivial procyclic subgroup of
$C_{n-1}$ coincides with $C_{n-1}$. 
\end{itemize}
\end{quote}
}

\begin{definition} \label{defi:plg}
The class $\mathcal{Z}(\mathcal{C})$ of pro-$\mathcal C$ groups 
consists of all finitely generated profinite subgroups of some
group $G_n$ of $\mathcal{Z}_n(\mathcal{C})$ with $n\geq 0$.
When $\mathcal C$ consists of all finite groups, we simply denote
$\mathcal{Z}_n(\mathcal{C})$ by $\mathcal{Z}_n$,
and 
$\mathcal{Z}(\mathcal{C})$ by $\mathcal{Z}$.
\end{definition}

Note that, 
when $\mathcal C$ consists of all finite $p$-groups
the class $\mathcal{Z}(\mathcal{C})$
coincides precisely with the class of groups originally studied 
by Kochloukova and Zalesskii \cite{KZ:11}.
Indeed,
the property of limit groups that
the normalizer and the centralizer
of any non-trivial 1-generated subgroup 
coincide and are abelian
holds for such pro-$p$ groups
(\cite[Thm.~5.1]{KZ:11}).

\begin{rmk} \label{rmk:abelian_normalizers}
{\rm
It follows from 
{condition {\bf A2}}
in the definition of the class $\mathcal{Z}_n$,
that 
{\it each non-trivial procyclic closed subgroup $P$ of $C_{n-1}$
has abelian normalizer in $G_n$}.
In fact, in light of 
\hyperref[prop:eqnorm-rz96]{Proposition~\ref{prop:eqnorm-rz96}}
we have
\[ 
N_{G_n}(P) = 
N_{G_{n-1}}(P) \amalg_{C_{n-1}} N_{A_{n-1}}(P) =
{C_{n-1}} \amalg_{C_{n-1}} {A_{n-1}} = A_{n-1}  \ .	
\]
}
\end{rmk}

The next theorem,
whose pro-$p$ analogue is unknown, gives the main source of important examples of groups from
the class $\mathcal Z$.
Before we prove it, 
recall that a group acts \emph{$k$-acylindrically}
on a graph if 
only the trivial element fixes pointwise
each subset of diameter $>k$.
Note that 
the action of $G_n$ 
on its standard profinite tree is not $1$-acylindrical, 
since
two distinct edges $1C_{n-1}$ and $gC_{n-1}$
incident to the vertex $1A_{n-1}$
have stabilizers whose intersection is $C_{n-1}$.

\begin{lemma} \label{l:2acyl}
For each $n\ge 1$, the action of $G_n$ 
on its standard profinite tree is $2$-acylindrical.
\end{lemma}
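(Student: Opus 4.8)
The plan is to work with the standard profinite tree $S$ of the amalgam $G_n=G_{n-1}\amalg_{C_{n-1}}A_{n-1}$, whose vertices are the cosets in $G_n/G_{n-1}\sqcup G_n/A_{n-1}$, whose edges are the cosets in $G_n/C_{n-1}$, and for which $\operatorname{stab}_{G_n}(gC_{n-1})=gC_{n-1}g^{-1}=\operatorname{stab}_{G_n}(gG_{n-1})\cap\operatorname{stab}_{G_n}(gA_{n-1})$. To prove $2$-acylindricity I would first reduce to a purely local statement: since in a profinite tree an element fixing two vertices at finite distance fixes pointwise the unique finite geodesic joining them, the pointwise stabiliser of any subset $Y$ with $\operatorname{diam}(Y)>2$ is contained in the pointwise stabiliser of a geodesic path of length $\ge 3$. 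Every such path contains a length-$2$ subpath $u-w-u'$ whose central vertex $w$ is of $G_{n-1}$-type, because the interior vertices of the path alternate in type and hence at least one of them is a $G_{n-1}$-coset. So everything comes down to showing that the pointwise stabiliser of such a configuration is trivial. One must indeed route the argument through a $G_{n-1}$-type vertex: around an $A_{n-1}$-type vertex all incident edges share the stabiliser $C_{n-1}$ because $A_{n-1}$ is abelian (this is exactly the failure of $1$-acylindricity recorded in the Remark), so those vertices can never force triviality.

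The heart of the proof is the claim that
\[
C_{n-1}\cap gC_{n-1}g^{-1}=\{1\}\qquad\text{for every }g\in G_{n-1}\setminus C_{n-1}.
\]
The hypothesis {\bf A2} only controls normalisers of procyclic subgroups of $C_{n-1}$, so the main obstacle is to upgrade it to information about an element conjugating one nontrivial element of $C_{n-1}$ to another; I would do this via centralisers. First, for any nontrivial $h\in C_{n-1}$ one has $Z_{G_{n-1}}(h)=C_{n-1}$: the inclusion $Z_{G_{n-1}}(h)\subseteq N_{G_{n-1}}(\overline{\langle h\rangle})=C_{n-1}$ holds by {\bf A2} applied to the procyclic subgroup $\overline{\langle h\rangle}$, while the reverse inclusion holds because $C_{n-1}$ is abelian. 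Now suppose $1\ne h\in C_{n-1}\cap gC_{n-1}g^{-1}$ and set $h'=g^{-1}hg\in C_{n-1}\setminus\{1\}$. Conjugation by $g$ carries $Z_{G_{n-1}}(h)$ onto $Z_{G_{n-1}}(h')$, that is $g^{-1}C_{n-1}g=C_{n-1}$, so $g\in N_{G_{n-1}}(C_{n-1})$. Applying {\bf A2} once more, this time with the procyclic subgroup $C_{n-1}$ itself, gives $N_{G_{n-1}}(C_{n-1})=C_{n-1}$ and hence $g\in C_{n-1}$, a contradiction. This proves the claim.

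Finally I would assemble the pieces. Translating by an element of $G_{n-1}$, a length-$2$ path centred at a $G_{n-1}$-type vertex may be taken to be $aA_{n-1}-G_{n-1}-a'A_{n-1}$ with $a,a'\in G_{n-1}$ and $a^{-1}a'\notin C_{n-1}$ (the condition that the two outer vertices be distinct). Its pointwise stabiliser equals $\operatorname{stab}_{G_n}(aC_{n-1})\cap\operatorname{stab}_{G_n}(a'C_{n-1})=aC_{n-1}a^{-1}\cap a'C_{n-1}a'^{-1}$, which is the conjugate by $a$ of $C_{n-1}\cap gC_{n-1}g^{-1}$ for $g=a^{-1}a'\in G_{n-1}\setminus C_{n-1}$; by the claim this is trivial. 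Combined with the reduction of the first paragraph, this shows that the pointwise stabiliser of any subset of diameter greater than $2$ is trivial, i.e. the action is $2$-acylindrical, the bound $2$ being sharp by the Remark.
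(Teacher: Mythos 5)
Your proof is correct and follows the paper's own argument in all essentials: both reduce $2$-acylindricity to the triviality of $C_{n-1}\cap gC_{n-1}g^{-1}$ for $g\in G_{n-1}\setminus C_{n-1}$ (by locating a $G_{n-1}$-type vertex in the middle of any length-$\ge 3$ path) and derive that triviality from condition {\bf A2}. The only, harmless, difference is in the last step: the paper notes that the abelian group $gC_{n-1}g^{-1}$ normalizes the intersection and so would be forced into $C_{n-1}$, whereas you pick a single non-trivial element of the intersection and argue through its centralizer; both variants ultimately rest on $N_{G_{n-1}}(C_{n-1})=C_{n-1}$.
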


\begin{proof}
Fix $n$.
We shall prove that any 
three consecutive 
edges have stabilizers that intersect trivially.
After a translation in the profinite tree of $G_n$,
we obtain
{two} distinct edges $1C_{n-1}$ and $gC_{n-1}$
incident to the vertex $1G_{n-1}$;
here, 
$g\in \protect{G_{n-1}\!-\!C_{n-1}}$.
Note that
$gC_{n-1}g^{-1}$ is not contained in $C_{n-1}$
(indeed, 
$N_{G_{n-1}}(C_{n-1}) $ equals $
\{x\in G_{n-1} \mid xC_{n-1}x^{-1}\subseteq C_{n-1}\}$
because closed subsemigroups in compact groups are subgroups,
and $N_{G_{n-1}}(C_{n-1})=C_{n-1}$ by definition).
On the other hand,
since $C_{n-1}$ is abelian,
$gC_{n-1}g^{-1}$ normalizes 
$C_{n-1}\cap gC_{n-1}g^{-1}$.
So, this intersection must be trivial;
otherwise,
its normalizer in $G_{n-1}$ would be
$C_{n-1}$ by 
{condition {\bf A2}}.	
The result follows.
\end{proof}

\begin{thm} \phantomsection \label{t:completion}
The profinite completion of a limit group 
belongs to the class $\mathcal Z$.
\end{thm}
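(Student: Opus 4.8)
The plan is to realise $\widehat\Gamma$ as a finitely generated closed subgroup of the profinite completion of an iterated extension of \emph{cyclic} centralizers of a free group, and then to recognise that completion as a member of the tower defining $\mathcal Z$. First I would appeal to the embedding theorem of Kharlampovich--Myasnikov and Sela \cite{KhM:98a}, \cite{Sela:01}: every limit group $\Gamma$ embeds as a finitely generated subgroup of a group $\Lambda=\Lambda_n$ built from a free group $\Lambda_0$ of finite rank by finitely many extensions of centralizers. Because the class $\mathcal Z_n$ only permits amalgamation over \emph{procyclic} subgroups, the point I must secure here is that the tower can be taken with infinite cyclic edge groups, i.e.\ $\Lambda_k=\Lambda_{k-1}*_{Z_{k-1}}A_{k-1}$ with $Z_{k-1}=\langle u_{k-1}\rangle$ the (cyclic) centralizer of an element $u_{k-1}$ in $\Lambda_{k-1}$ and $A_{k-1}=Z_{k-1}\times B_{k-1}$, where $B_{k-1}$ is free abelian of finite positive rank. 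I record that each $\Lambda_k$ is again a limit group, hence finitely generated, torsion-free, subgroup separable, and CSA (commutative-transitive with malnormal maximal abelian subgroups).

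Next I would prove by induction on $k$ that $\widehat{\Lambda_k}\in\mathcal Z_k$. The base case holds since $\widehat{\Lambda_0}$ is free profinite of finite rank, so $\widehat{\Lambda_0}\in\mathcal Z_0$. For the inductive step, the separability properties of the limit group $\Lambda_k$ established by Wilton \cite{Wilton:08} ensure that $\Lambda_k$ induces the full profinite topology on the closed subgroups $\Lambda_{k-1}$, $A_{k-1}$ and $Z_{k-1}$, and hence that the profinite completion respects the splitting, giving $\widehat{\Lambda_k}=\widehat{\Lambda_{k-1}}\amalg_{\widehat{Z_{k-1}}}\widehat{A_{k-1}}$. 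This amalgamated profinite product is proper, since the procyclic subgroup $\widehat{Z_{k-1}}$ is central in the abelian factor $\widehat{A_{k-1}}$ and one may invoke the theorem of Ribes recalled in Section~\ref{s:prelim}. By the inductive hypothesis $\widehat{\Lambda_{k-1}}\in\mathcal Z_{k-1}$, and from $A_{k-1}=Z_{k-1}\times B_{k-1}$ we get $\widehat{A_{k-1}}=\widehat{Z_{k-1}}\times\widehat{B_{k-1}}$; thus condition \textbf{A1} holds with $C_{k-1}:=\widehat{Z_{k-1}}\cong\widehat{\mathbb Z}$ a proper procyclic direct factor of the free abelian profinite group $\widehat{A_{k-1}}$.

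The remaining condition \textbf{A2} is the heart of the argument and the step I expect to be the main obstacle: I must show that the normalizer in $\widehat{\Lambda_{k-1}}$ of every non-trivial procyclic subgroup $P\le\widehat{Z_{k-1}}$ is exactly $\widehat{Z_{k-1}}$. At the level of the discrete group this is immediate, for the CSA property makes the maximal cyclic subgroup $Z_{k-1}$ malnormal and self-normalizing in $\Lambda_{k-1}$. The genuine difficulty is that normalizers and centralizers generally misbehave under profinite completion, so discrete malnormality does not transfer for free. This is precisely where the centralizer result of Chagas and Zalesskii \cite[Lemma~3.5]{CZ:07} is needed: it pins down the centralizers, and with them the normalizers, of procyclic subgroups inside the profinite completion of a limit group, and I would use it to rule out any extra normalizing elements, concluding $N_{\widehat{\Lambda_{k-1}}}(P)=\widehat{Z_{k-1}}$. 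This closes the induction and yields $\widehat{\Lambda}=\widehat{\Lambda_n}\in\mathcal Z_n$.

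Finally I would pass from $\widehat\Lambda$ down to $\widehat\Gamma$ by means of Wilton's virtual retraction theorem \cite[Thm.~B]{Wilton:08}. Since $\Gamma$ is a finitely generated subgroup of the limit group $\Lambda$, there are a finite-index subgroup $\Lambda'\le\Lambda$ containing $\Gamma$ and a retraction $\rho\colon\Lambda'\to\Gamma$. Applying profinite completion, $\widehat\rho$ provides a left inverse to the induced map $\widehat\Gamma\to\widehat{\Lambda'}$, which is therefore injective; as $\Lambda'$ has finite index in $\Lambda$, the map $\widehat{\Lambda'}\to\widehat\Lambda$ is an embedding onto an open subgroup, so $\widehat\Gamma$ embeds as a closed subgroup of $\widehat\Lambda\in\mathcal Z_n$. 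Being finitely generated, $\widehat\Gamma$ is then a member of $\mathcal Z$ by Definition~\ref{defi:plg}, which is the assertion to be proved.
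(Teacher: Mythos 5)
Your overall skeleton coincides with the paper's: realise $\Gamma$ as a finitely generated subgroup of an iterated extension of cyclic centralizers $\Lambda_n$, show $\widehat{\Lambda_k}=\widehat{\Lambda_{k-1}}\amalg_{\widehat{Z_{k-1}}}\widehat{A_{k-1}}$ via Wilton's separability results and \cite[Lemma~2.1]{RZ:96}, verify \textbf{A1}, and descend to $\widehat\Gamma$ by the virtual retraction of \cite[Thm.~B]{Wilton:08}. All of that is sound. The problem is that at the step you yourself flag as ``the heart of the argument'' --- condition \textbf{A2} --- you do not actually give an argument. You write that the Chagas--Zalesskii lemma ``pins down the centralizers, and with them the normalizers,'' but that is exactly the leap that needs justification. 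That lemma controls only \emph{centralizers} of closures of subgroups of the discrete group, namely $C_{\widehat\Gamma}(\overline Z)=\overline{C_\Gamma(Z)}$. Condition \textbf{A2} asks for the normalizer $N_{\widehat\Gamma}(P)$ for \emph{every} non-trivial procyclic $P\le\overline Z\cong\widehat{\mathbb Z}$, including subgroups such as the $p$-Sylow $\mathbb Z_p\le\overline Z$, which are not closures of subgroups of $\Gamma$; even $C_{\widehat\Gamma}(P)$ is then a priori larger than $C_{\widehat\Gamma}(\overline Z)$, and $N_{\widehat\Gamma}(P)/C_{\widehat\Gamma}(P)$ embeds only into the (large, abelian) group $\operatorname{Aut}(\widehat{\mathbb Z})$, so nothing forces $N=C$.

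The paper closes this gap with a genuinely geometric argument that your proposal omits: for a hyperbolic generator $\zeta$ of $Z$ one forms the Tits straight path $T_\zeta$ and its closure $\overline{T_\zeta}$ in the standard profinite tree, shows that $N=N_{\widehat\Gamma}(\overline{\langle g\rangle})$ acts on $\overline{T_\zeta}$ and acts \emph{freely} there (this is where the $2$-acylindricity of Lemma~\ref{l:2acyl} and torsion-freeness enter), and then uses the covering theory of simply connected profinite graphs \cite{Zalesskii:89} to conclude $N\cong\widehat{\mathbb Z}$. Only once $N$ is known to be abelian does one get $N\subseteq C_{\widehat\Gamma}(\overline Z)$, at which point \cite[Prop.~3.5 and Lemma~3.3]{CZ:07} finishes the identification $N=\overline Z$; a separate induction handles the elliptic case. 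Without this (or some substitute for it), your verification of \textbf{A2} is an assertion rather than a proof, so the proposal as written has a genuine gap at its central step.
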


\begin{proof}
First, let $\Gamma$ be a finitely generated subgroup of
a free group $\Gamma_0$. 
Then $\Gamma$ is a free group and its profinite completion
is a free profinite group of finite rank.

Next, let $\Gamma$ be a 
finitely generated subgroup of the limit group 
$\Gamma_n$ given by
$\Gamma_n=\Gamma_{n-1} *_{Z_{n-1}} A_{n-1}$,
where
$Z_{n-1}$ is a 1-generated proper subgroup and a direct factor of 
a finite rank free abelian group $A_{n-1}$,
and $Z_{n-1}$ is self-centralizing in $\Gamma_{n-1}$.

Note that
$\widehat{\Gamma_n}=
\widehat{\Gamma_{n-1}}\amalg_{\widehat{Z_{n-1}}}\widehat{A_{n-1}}$.
Indeed, 
the group $\Gamma_n$ is residually free;
moreover, 
by \cite[Lemma~2.1]{RZ:96},
the profinite topology of $\Gamma_n$ induces 
the profinite topology on $\Gamma_{n-1}$, $A_{n-1}$, and $Z_{n-1}$;
so, the stated equality follows 
(\textit{cf.} \cite[Ex.~9.2.7(2)]{RZ:00b}).

Now, 
we prove that the group $\widehat{\Gamma}$
is naturally embedded in $\widehat{\Gamma_n}$.
Indeed, Wilton
\cite[Thm.~B]{Wilton:08}
showed that
there exists a finite index subgroup
$\Upsilon$ in $\Gamma_n$ containing $\Gamma$ and there exists a group homomorphism
$\rho\colon \Upsilon\to \Gamma$ 
that is an extension of the identity map of $\Gamma$;
thus, $\Upsilon$ is isomorphic to
$\ker(\rho)\rtimes \Gamma$.
By \cite[Lemma~3.1.4(a) and Lemma~3.1.5(a)]{RZ:00b}, 
the profinite topology of $\Gamma_n$ induces 
the profinite topology on $\Upsilon$ and
the profinite topology of $\Upsilon$ induces
the profinite topology on $\Gamma$.
So, the desired embedding follows 
(\textit{cf.} \cite[Lemma~3.2.6]{RZ:00b}).

To conclude the proof, we shall establish the last requirement of 
{condition {\bf A2}}.

\smallskip

\noindent \textit{Claim.
Let $\Gamma$ be a limit group.
If $Z$ is a 1-generated subgroup of $\Gamma$
such that $N_{\Gamma}(Z)=Z$,
then 
$N_{\widehat{\Gamma}}(\overline{\langle g\rangle})
=\overline{Z}$
for each non-trivial element $g$ in $\overline{Z}$.}
\textup{(Here the closures are taken in $\widehat{\Gamma}$.)}

\smallskip

It suffices to prove the 
claim
for $\Gamma=\Gamma_n$.
Let $S$ be the standard tree on which $\Gamma_n$ acts
(if $n=0$, then $S$ is the Cayley graph of $\Gamma_0$).
Let $\zeta$ be a generator of $Z$.

Suppose $\zeta$ does not fix any vertex of $S$
(\textit{i.e.}, the element $\zeta$ is hyperbolic).
Fix $n\ge 1$.
In order to apply 
\hyperref[prop:profinite_tits]{Proposition~\ref{prop:profinite_tits}},
recall that Wilton \cite[Thm.~A]{Wilton:08}
proved that limit groups are subgroup separable.
Let $T_\zeta$ be the Tits straight path of $\zeta$ in $S$ 
and let $\overline{T_\zeta}$ be its closure in 
the standard profinite tree $\widehat{S}$ on which $\widehat{\Gamma_n}$ acts.

\begin{small}
\begin{equation*}
\xymatrix@R=5pt@C=0pt{
& \widehat{\Gamma} \ar@{-}[dl] \ar@{-}[dr] \\
\Gamma \ar@{-}[dr]
& & {\overline{\langle \zeta\rangle}} \ar@{-}[dl] \\ 
& \langle \zeta\rangle \\
}
\qquad
\xymatrix@R=5pt@C=0pt{
& \widehat{S} \ar@{-}[dl] \ar@{-}[dr] \\
S \ar@{-}[dr]
& & \overline{T_\zeta} \ar@{-}[dl] \\ 
& T_\zeta \\
}
\end{equation*}
\end{small}

\noindent
\emph{Let $N=N_{\widehat{\Gamma}}(\overline{\langle g\rangle})$.}
By 
\hyperref[prop:profinite_tits]{Proposition~\ref{prop:profinite_tits}}(b),
$N$ acts naturally on $\overline{T_\zeta}$;
for, 
the translation of $\overline{T_\zeta}$ by an element of $N$
is a minimal $\overline{\langle \zeta\rangle}$-invariant profinite subtree
of $\widehat{S}$.
Now, note that $N$
acts freely on $\overline{T_\zeta}$.
Indeed,
if $h$ in $N$ fixes
some element of $\overline{T_\zeta}$, then, by 
\hyperref[prop:profinite_tits]{Proposition~\ref{prop:profinite_tits}}(a),
there exists $c$ in $\overline{\langle g\rangle}$ such that
$c^{-1}hc$ fixes an element of $T_\zeta$;
so, by \hyperref[prop:profinite_tits]{Proposition~\ref{prop:profinite_tits}}(c),
$T_\zeta\cup c^{-1}hcT_\zeta=T_\zeta$;
whence, $c^{-1}hc$ acts on $T_\zeta$.
Let $H$ be the set of all elements of $N$ that leave $T_\zeta$ invariant;
from
the \mbox{2-acylindrical} action of $\widehat{\Gamma}$ on $\widehat{S}$     
(\hyperref[l:2acyl]{Lemma~\ref{l:2acyl}}),
the obvious map $H \to \textup{Aut}(T_\zeta)$ is an embedding;
so, being $H$ torsion-free
(\hyperref[t:cohom_main]{Theorem~\ref{t:cohom_main}})
and being $\textup{Aut}(T_\zeta)$ the free product of two groups of order 2
(\textit{i.e.,} the infinite dihedral group of $\mathbb Z$),
it follows that
the automorphism induced by $c^{-1}hc$ is the identity,
that is, $h=1$.

Next, we use an argument from covering theory.
By 
\hyperref[prop:profinite_tits]{Proposition~\ref{prop:profinite_tits}}(a),
$N\backslash\overline{T_\zeta}$ is 
a quotient of the finite circuit $\langle\zeta\rangle\backslash{T_\zeta}$;
on the other hand, 
the connected profinite graph $\overline{T_\zeta}$
is a simply connected profinite graph,
because so is $\widehat{S}$ 
(\textit{cf}. \cite[Lemma~A in~1.3 and Thm.~3.4]{ZM:89});
therefore, by \cite[Thm.~2.8 and Lemma~2.3]{Zalesskii:89}
we get that
$N$ is isomorphic to the abelian group $\widehat{\mathbb Z}$.

So, we obtain
\[
\overline{Z}
\subseteq N
%
\subseteq C_{\widehat{\Gamma}}(\overline{Z})
= \overline{C_{\Gamma}(Z)} 
= \overline{N_{\Gamma}(Z)} \ ,
\]
where the last two equalities
are from
\cite[Prop.~3.5 and Lemma~3.3]{CZ:07}.

Note that
the proof for $n=0$ is, \textit{mutatis mutandis},
the same (\textit{cf}.  \cite[proof of Lemma~3.4]{RZ:96}).
This case is done.

Finally, suppose $\zeta$ fixes some vertex of $S$.
Then, since 
it cannot fix a vertex that is a coset of $A_{n-1}$,
it is, up to conjugation in $\Gamma_n$, an element in $\Gamma_{n-1}$. By \cite[Thm 3.12]{ZM:88} $N_{\widehat\Gamma}((\overline{\langle g\rangle})=N_{\widehat\Gamma_{n-1}}((\overline{\langle g\rangle})$ so that
the 
claim 
follows by induction on $n$.
\end{proof}

Simple examples of groups in the class $\mathcal{Z}$
are therefore the profinite completion of surface groups,
except those of non-orientable surfaces 
of genus 1, 2, or 3 (\textit{i.e.}, 
the real projective plane, the Klein bottle, and Dyck's surface).


It is clear that the free profinite product of 
the profinite completion of two limit groups 
belongs to the class $\mathcal{Z}$.
The next result 
provides more examples of groups in the class $\mathcal{Z}$.

The important construction of {G. Baumslag}
known, nowadays, as {\it Baumslag double}
arises in his classical paper
\cite{Baumslag:62} on residually free groups.
It consists of an amalgamated free product such that
the amalgamated free factors are isomorphic groups and
the amalgamated subgroup is cyclic and 
self-normalizing in each amalgamated free factor.
Adding an obvious condition we obtain 
that a Baumslag double of a group in the class $\mathcal Z$
stays in the class $\mathcal Z$:

\begin{prop} \phantomsection \label{p:double}
Let $G$ be 
a finitely generated profinite group
contained in some group $G_n$ of $\mathcal{Z}_n$, 
let $\varphi\colon G\to {G}^{\varphi}$
be an isomorphism of profinite groups.
Assume that
$C$ is a procyclic subgroup of $G$ such that
for each non-trivial procyclic subgroup $D$ of $C$
we have $N_{G_n}(D)=C$.
Then the amalgamated free profinite product
$G\amalg_{C={C}^{\varphi}} {G}^{\varphi}$
belongs to $\mathcal{Z}_{n+1}$.
\end{prop}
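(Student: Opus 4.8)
The plan is to exhibit the double as a finitely generated closed subgroup of a single profinite extension of a centralizer performed on top of $G_n$, and then to read off membership in the class from Definition~\ref{defi:plg}. To build the ambient group, I would pick a procyclic group $\overline{\langle t\rangle}\cong\widehat{\mathbb Z}$ and set $A:=C\times\overline{\langle t\rangle}$, a free abelian profinite group in which $C$ is a proper procyclic direct factor; then form $E:=G_n\amalg_C A$. Condition \textbf{A1} holds by construction, while condition \textbf{A2} is verbatim the hypothesis that $N_{G_n}(D)=C$ for every non-trivial procyclic $D\le C$; since $C$ is central in $A$, the amalgam is proper by Ribes' criterion recalled in Section~\ref{s:prelim}. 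Hence $E\in\mathcal Z_{n+1}$, and $E$ acts on its standard profinite tree $T$ with vertex stabilizers the conjugates of $G_n$ and of $A$ and edge stabilizers the conjugates of $C$.

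The key observation is that $t$ centralizes $C$, so conjugation by $t$ fixes $C$ pointwise. I would then study $H:=\overline{\langle G,\,tGt^{-1}\rangle}\le E$. Writing $v_1:=1\!\cdot\!G_n$, $u:=1\!\cdot\!A$ and $v_2:=t\!\cdot\!G_n$, the vertex $v_1$ is fixed by $G$, the vertex $v_2$ by $tGt^{-1}$, and the reduced geodesic joining them is $v_1,u,v_2$, whose two edges $1\!\cdot\!C$ and $t\!\cdot\!C$ both have stabilizer $C$ (the second because $tCt^{-1}=C$). An element fixing both $v_1$ and $v_2$ fixes this geodesic, so $G\cap tGt^{-1}=C$. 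Applying the profinite version of Bass--Serre theory to the action of $H$ on its minimal invariant subtree (Proposition~\ref{prop:invar-zal90}), for which the segment $v_1,u,v_2$ is a fundamental domain, I would conclude $H=G\amalg_{C}(H\cap A)\amalg_{C}tGt^{-1}$, and, once $H\cap A=C$ is established, $H=G\amalg_C tGt^{-1}$, a proper amalgam.

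It then remains to match $H$ with the prescribed double. The map that is the inclusion on the first factor $G$ and sends $x\in G^\varphi$ to $t\,\varphi^{-1}(x)\,t^{-1}$ agrees on the amalgamated copy of $C$ (as $t$ centralizes $C$, one has $t\varphi^{-1}(\varphi(c))t^{-1}=c$), hence induces an isomorphism $G\amalg_{C=C^\varphi}G^\varphi\cong H$. Since $G$ is finitely generated, so is the double, and being a finitely generated closed subgroup of $E\in\mathcal Z_{n+1}$ it belongs to the class $\mathcal Z$, as claimed.

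The step I expect to be the main obstacle is the identification $H=G\amalg_C tGt^{-1}$, that is, the injectivity of the canonical surjection from the abstract amalgam onto $H$: in the profinite category there is no general subgroup theorem for amalgams to quote, so this must be extracted from the action on $T$. The crux is the computation $H\cap A=\operatorname{stab}_H(u)=C$; here the two edges of the geodesic meeting at $u$ already have intersecting stabilizers equal to $C$ (the very failure of $1$-acylindricity noted before Lemma~\ref{l:2acyl}), and to rule out a larger stabilizer and to guarantee that the length-two segment is a genuine fundamental domain I would lean on the properness of $E$ together with the $2$-acylindricity of the $G_{n+1}$-action (Lemma~\ref{l:2acyl}).
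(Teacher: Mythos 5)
Your proposal is correct and takes essentially the same route as the paper: both form the ambient extension of centralizers $G_{n+1}=G_n\amalg_{C}\bigl(C\times\overline{\langle t\rangle}\bigr)\in\mathcal{Z}_{n+1}$ and identify the double with the finitely generated closed subgroup $\overline{\langle G,\,tGt^{-1}\rangle}$. The only difference is that the paper states this identification without further justification, whereas you sketch a tree-theoretic argument for it, correctly isolating $\operatorname{stab}_H(u)=C$ as the delicate point.
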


\begin{proof}
Let
$C_n=C$ and $A_n=C\times \overline{\langle t\rangle}$
with $t$ such that $A_n$ is a free profinite abelian group of rank $2$,
and define
$G_{n+1}=G_n\amalg_{C_n} A_n$.
From hypothesis,
$G_{n+1}$ belongs to $\mathcal{Z}_{n+1}$.
Since $G\amalg_{C={C}^{\varphi}} {G}^{\varphi}$
is isomorphic to the subgroup
$\overline{\langle G,tGt^{-1}\rangle}$ of
$G\amalg _{C_n} A_n$,
it is a finitely generated closed subgroup of $G_{n+1}$.
\end{proof}

One may apply this proposition to $G=\widehat{\Gamma_n}$
and $C=\overline{Z}$ with $Z$ a 1-generated self-normalizing
subgroup of $\Gamma_n$
(\textit{cf.} Claim in the proof of 
\hyperref[t:completion]{Theorem~\ref{t:completion}}).
The proof of the proposition carries over in the discrete and pro-$p$ situations.

\phantomsection
\section{Main results}
\label{s:main_results}

We begin this section by proving the homological Theorems 
\ref{t:cohom_main} and \ref{t:homological_main}.

\begin{proof}[Proof of 
\protect{\textup{\hyperref[t:cohom_main]{Theorem~\ref{t:cohom_main}}}}]
Say $G$ is a closed subgroup of some group
$G_n$ of $\mathcal{Z}_n$.
If $n=0$, we have $cd_p(G)\le 1$.
Suppose then $n\ge 1$.
In virtue of a Mayer-Vietoris sequence
associated to the amalgamated free product
$G_{n-1}\amalg_{C_{n-1}} A_{n-1}$
({\it cf.} \cite[(2.7)]{ZM:88}),
we obtain
\begin{equation*}
cd_p(G)\le
\sup\{2, cd_p(G\cap xG_{n-1}x^{-1}), cd_p(G\cap yA_{n-1}y^{-1})
\mid x\in G_n,y\in G_n\} \ ,
\end{equation*}
because $cd_p(C_{n-1})\le 1$.

Let
$\alpha_p(G)=
\sup\{ \ cd_p(A) \mid A\le_c G,$ $A \hbox{ abelian} \}$.
Then, 
by induction 
$cd_p(G\cap xG_{n-1}x^{-1})$
$\le \max\{2,\alpha_p(G)\}$;
and clearly 
$\protect{cd_p(G\cap yA_{n-1}y^{-1})}\le \alpha_p(G)$.
Hence 
$cd_p(G)\le \max\{2,\alpha_p(G)\}$.
Furthermore, if $cd_p(G)\ge 2$, 
then we plainly have
$\max\{2,\alpha_p(G)\}\le cd_p(G)$.
\end{proof}

To prove 
\hyperref[t:homological_main]{Theorem~\ref{t:homological_main}}
we use the fact
established by Grunewald, Jaikin-Zapirain, and Zalesskii
[GJZ08, Thm.~1.3]
that limit groups are good (in the sense of Serre \cite[I.\S2.6]{Serre:94}).
A discrete group is 
good
if the Galois cohomology of its profinite completion 
coincides with the corresponding (discrete) group cohomology
for finite discrete modules.
More precisely:
a discrete group $\Gamma$ is 
called \emph{good}
if for each finite discrete $\widehat{\Gamma}$-module $M$, the
obvious homomorphism
\[
{\rm H}^q(\widehat{\Gamma},M)
\longrightarrow {\rm H}^q(\Gamma,M)
\]
is bijective for any $q\ge 0$.

Regarding \hyperref[t:cohom_main]{Theorem~\ref{t:cohom_main}}, 
note that
if $G$ is the profinite completion of a limit group $\Gamma$,
for each $q>cd(\Gamma)$ we have ${\rm H}^{q}(G,M)=0$ 
for every discrete $G$-module $M$ which is finite, 
and hence which is torsion.
So $cd(G)\leq cd(\Gamma)$.

\begin{lemma}\label{l:goody}
Let $\Gamma$ be a discrete group and 
let $M$ be a discrete finite $\widehat{\Gamma}$-module.
Consider the direct system \textup{(}resp., the inverse system\textup{)}
consisting of the 
restriction maps ${\rm H}^q(N,M)\to {\rm H}^q(N',M)$
\textup{(}resp., 
corestriction maps ${\rm H}_q(N',M)\to {\rm H}_q(N,M)$\textup{)},
over the indexing set of all finite index normal subgroups $N$ of $\Gamma$.
We have:

\begin{itemize}
\item[\rm (i)]
Goodness of $\Gamma$ implies $\varinjlim_N {\rm H}^q(N,M)=0$
for all $q\ge 1$.
\item[\rm (ii)]
If ${\rm H}_q(N,\mathbb{Z})$ is finitely generated 
for each sufficiently small $N$ 
for all $q\ge 0$,
then 
$\varinjlim_N {\rm H}^q(N,M)=0$ implies $\varprojlim_N {\rm H}_q(N,M)=0$
for all $q\ge 1$.
\end{itemize}
\end{lemma}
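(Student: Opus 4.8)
The plan is to handle both items by a single mechanism: transport the discrete (co)homology of the finite-index subgroups of $\Gamma$ to the continuous (co)homology of the corresponding open subgroups of $\widehat{\Gamma}$, where a direct limit over shrinking open subgroups vanishes a priori, and then pass from cohomology to homology by Pontryagin duality.

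For item (i), I would first record the elementary profinite vanishing: for any profinite group $P$ and any finite discrete $P$-module $M$ one has $\varinjlim_{U}{\rm H}^q(U,M)=0$ for all $q\ge 1$, the limit being over the open normal subgroups $U$ of $P$ with restriction maps. This holds because every continuous class $\xi\in {\rm H}^q(U,M)$ is inflated from a finite quotient ${\rm H}^q(U/V,M)$ with $V$ an open normal subgroup of $U$ acting trivially on $M$ (continuity of profinite cohomology), and the restriction of an inflated class to $V$ vanishes: at the level of normalized cochains the inflated cochain restricted to $V$ is constant, equal to its value at $(1,\dots,1)$, hence $0$; passing if necessary to the normal core of $V$ in $P$ kills $\xi$ already over an open normal subgroup. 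Next I would match the two directed systems. For a finite-index normal $N\trianglelefteq\Gamma$ its closure $\overline{N}$ in $\widehat{\Gamma}$ is open normal, and since every finite-index subgroup of $N$ is again of finite index in $\Gamma$ (hence open in the profinite topology of $\Gamma$), the full profinite topology of $N$ agrees with the one induced from $\Gamma$, so $\widehat{N}\cong\overline{N}$. Because finite-index subgroups of a good group are again good (a standard fact), each comparison map $\phi_N\colon {\rm H}^q(\overline{N},M)\to {\rm H}^q(N,M)$ is an isomorphism, and these are natural with respect to restriction. As the $\overline{N}$ run through a cofinal family of open subgroups of $\widehat{\Gamma}$, I conclude
\[
\varinjlim_N {\rm H}^q(N,M)\;\cong\;\varinjlim_N {\rm H}^q(\overline{N},M)\;=\;\varinjlim_{U}{\rm H}^q(U,M)\;=\;0 \qquad (q\ge 1).
\]

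For item (ii), I would invoke the duality ${\rm H}_q(N,M)^{\vee}\cong {\rm H}^q(N,M^{\vee})$, where $(-)^{\vee}=\operatorname{Hom}(-,\mathbb{Q}/\mathbb{Z})$ and $M^{\vee}$ is the dual module; it is valid because $\mathbb{Q}/\mathbb{Z}$ is injective, and under it the corestriction (inclusion-induced) maps in homology correspond to the restriction maps in cohomology. The finite-generation hypothesis enters to make the homology groups finite: restricting to the cofinal family of $N$ contained in a fixed finite-index subgroup acting trivially on $M$, the module $M$ is trivial, so the universal coefficient theorem expresses ${\rm H}_q(N,M)$ through ${\rm H}_q(N,\mathbb{Z})\otimes M$ and $\operatorname{Tor}_1({\rm H}_{q-1}(N,\mathbb{Z}),M)$, both finite once the integral homology is finitely generated. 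With all ${\rm H}_q(N,M)$ finite, Pontryagin duality exchanges inverse and direct limits, yielding
\[
\Bigl(\varprojlim_N {\rm H}_q(N,M)\Bigr)^{\vee}\;\cong\;\varinjlim_N {\rm H}_q(N,M)^{\vee}\;\cong\;\varinjlim_N {\rm H}^q(N,M^{\vee}).
\]
Applying the cohomological vanishing to the finite module $M^{\vee}$ annihilates the right-hand side, and a profinite group with trivial Pontryagin dual is trivial, so $\varprojlim_N {\rm H}_q(N,M)=0$ for $q\ge 1$.

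The genuinely routine ingredients (the profinite vanishing, the identification $\widehat{N}\cong\overline{N}$, and the duality of the transfer and restriction maps) are standard. The step I expect to demand the most care is the bookkeeping that makes the comparison isomorphisms $\phi_N$ compatible with restriction, so that they descend to an isomorphism of directed systems rather than a mere termwise isomorphism; this requires confirming that finite-index subgroups of a good group are good and that $\{\overline{N}\}$ is cofinal among the open subgroups of $\widehat{\Gamma}$. In item (ii) the only delicate point is securing finiteness of ${\rm H}_q(N,M)$ before invoking Pontryagin duality, which is precisely what the finite-generation hypothesis on ${\rm H}_q(N,\mathbb{Z})$ supplies.
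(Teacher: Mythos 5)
Your proof is correct, and its skeleton matches the paper's: part (i) rests on Serre's goodness criterion, and part (ii) on the duality between homology and cohomology over $\mathbb{Q}/\mathbb{Z}$, with the finite-generation hypothesis supplying the finiteness that makes the duality usable. The differences are in execution. For (i) the paper simply cites Serre's exercise [I.\S 2.6, Exerc.~1]; you prove it, via the standard vanishing of $\varinjlim_U {\rm H}^q(U,M)$ over open normal subgroups of a profinite group together with the identification $\widehat{N}\cong\overline{N}$ and the goodness of finite-index subgroups of a good group --- this is the intended solution of the exercise, so nothing is lost. For (ii) the paper argues pair by pair: it fixes $N'\le N$ with null restriction and shows the corestriction ${\rm H}_q(N',M)\to {\rm H}_q(N,M)$ is null by writing out the universal coefficient sequences for $N$ and $N'$ and dualizing the two outer terms ($\operatorname{Tor}$ against $\operatorname{Ext}$, and $\operatorname{Hom}$ against $\otimes$); you instead dualize the whole homology groups at once via $\operatorname{Hom}(-,\mathbb{Q}/\mathbb{Z})$ and exchange $\varprojlim$ with $\varinjlim$ using finiteness. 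Your version is cleaner and avoids the diagram chase, at the cost of one point you should make explicit: the hypothesis of (ii) is the vanishing of $\varinjlim_N {\rm H}^q(N,M)$ for the given $M$, whereas your final step invokes it for $M^\vee$. This is harmless --- after your reduction to the cofinal family of $N$ acting trivially on $M$, a finite trivial module satisfies $M^\vee\cong M$ as modules, so the two vanishings coincide, and the paper's own proof relies on the same isomorphism $M^*\cong M$ --- but as written the appeal to ``the cohomological vanishing for $M^\vee$'' is a small unjustified leap that one sentence would close.
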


\begin{proof} 
\noindent
\textrm{(i)} Fix $q$ and $N$. It suffices to prove that 
for each $x$ in ${\rm H}^q(N,M)$ 
there exists a finite index subgroup $N_x$ of $N$ such that
the image of $x$ 
under
the map ${\rm H}^q(N,M)\to {\rm H}^q(N_x,M)$ 
in the direct system
is zero.
This is a well-known exercise of Serre \cite[I.\S2.6, Exerc.~1, p.~13]{Serre:94}.

\textrm{(ii)}
We may and do assume that $M$ is a trivial $N$-module 
for each sufficiently small $N$;
by cofinality this does not alter the limits.
Fix $q\geq 1$ and $N$. 
There exists a finite index subgroup $N'$ of $N$ such that
the map ${\rm H}^q(N,M)\to {\rm H}^q(N',M)$
in the direct system is the null map. 
It suffices to prove that 
the corestriction ${\rm H}_q(N',M)\to {\rm H}_q(N,M)$ is the null map.

By the Universal coefficient theorem
(\textit{cf.} \cite[Thm.~3.3 and Thm.~3.3a of Ch.~VI]{CE:56})
we have the following commutative diagrams with
splitting exact rows of abelian groups
\[\xymatrixcolsep{1pc}\xymatrixrowsep{1.5pc}
\xymatrix{
\{0\} \ar[r] & {\rm H}_{q}(N',\mathbb{Z}) \otimes M \ar[r] \ar[d] &
{\rm H}_q(N',M) \ar[r] \ar[d] & {\rm Tor}({\rm H}_{q-1}(N',\mathbb{Z}),M) \ar[r] \ar[d] & \{0\} \\
\{0\} \ar[r] & {\rm H}_{q}(N,\mathbb{Z}) \otimes M \ar[r] &
{\rm H}_q(N,M) \ar[r] & {\rm Tor}({\rm H}_{q-1}(N,\mathbb{Z}),M) \ar[r] & \{0\}  }
\]
and
\[\xymatrixcolsep{1pc}\xymatrixrowsep{1.5pc}
\xymatrix{
\{0\} \ar[r] & {\rm Ext}({\rm H}_{q-1}(N,\mathbb{Z}),M) \ar[r] \ar[d] &
{\rm H}^q(N,M) \ar[r] \ar[d] & {\rm Hom}({\rm H}_{q}(N,\mathbb{Z}),M) \ar[r] \ar[d] & \{0\} \\
\{0\} \ar[r] & {\rm Ext}({\rm H}_{q-1}(N',\mathbb{Z}),M) \ar[r]  &
{\rm H}^q(N',M) \ar[r]  & {\rm Hom}_{\mathbb Z}({\rm H}_{q}(N',\mathbb{Z}),M) \ar[r]  & \{0\} }
\]
where 
$\otimes=\otimes_{\mathbb Z}$, 
${\rm Tor}={\rm Tor}_1^{\mathbb Z}$, 
${\rm Ext}={\rm Ext}^1_{\mathbb Z}$, and
${\rm Hom}={\rm Hom}_{\mathbb Z}$.

Let ``asterisk'' denote applying ${\rm Hom}_{\mathbb Z}(\,\textbf{--}\,,\mathbb{Q}/\mathbb{Z})$.
Duality homomorphisms 
(\textit{cf.} \cite[Prop.~5.4 of Ch.~VI and Prop.~5.2$'$ of Ch. II]{CE:56}
-- here the finiteness hypothesis on ${\rm H}_q(N,\mathbb{Z})$ is used)
together with $M^*\cong M$, 
give us isomorphisms as horizontal arrows
in the two subsequent diagrams
\[\xymatrixcolsep{1pc}\xymatrixrowsep{1.5pc}
\xymatrix{
{\rm Tor}({\rm H}_{q-1}(N',\mathbb{Z}),M) \ar[r] \ar[d] & {\rm Tor}(M^*,{\rm H}_{q-1}(N',\mathbb{Z})) \ar[r] \ar[d] &
{\rm Ext}({\rm H}_{q-1}(N',\mathbb{Z}),M)^* \ar[d] \\
{\rm Tor}({\rm H}_{q-1}(N,\mathbb{Z}),M) \ar[r] & {\rm Tor}(M^*,{\rm H}_{q-1}(N,\mathbb{Z})) \ar[r] &
{\rm Ext}({\rm H}_{q-1}(N,\mathbb{Z}),M)^* }
\]
and
\[\xymatrixcolsep{1pc}\xymatrixrowsep{1.5pc}
\xymatrix{
{\rm Hom}({\rm H}_{q}(N,\mathbb{Z}),M) \ar[r] \ar[d] & {\rm Hom}({\rm H}_{q}(N,\mathbb{Z}),M^*) \ar[r] \ar[d] &
({\rm H}_{q}(N,\mathbb{Z}) \otimes M)^* \ar[d] \\
{\rm Hom}({\rm H}_{q}(N',\mathbb{Z}),M) \ar[r] & {\rm Hom}({\rm H}_{q}(N',\mathbb{Z}),M^*) \ar[r] &
({\rm H}_{q}(N',\mathbb{Z}) \otimes M)^*  \ .
 }
\]
Since the restriction is null, it follows that so is the corestriction.
\end{proof}

We mention that both reverse implications of the previous Lemma hold.
The finiteness hypothesis in item (ii) is satisfied for groups
of type FP over $\mathbb{Z}$.

\begin{proof}[Proof of 
\protect{\textup{\hyperref[t:cohom_main]{Theorem~\ref{t:homological_main}}}}]
Let $G$ be the profinite completion $\widehat{\Gamma}$ of a limit group $\Gamma$.

\noindent
(a)
Let $d=cd(\Gamma)$ and let
$0\to R_d\to \ldots \to R_0\to \mathbb{Z}\to 0$ be a 
projective resolution 
of the trivial ${\mathbb Z}[\Gamma]$-module ${\mathbb Z}$ 
with all $R_i$ finitely generated.
Consider the 
finitely generated profinite projective 
${\mathbb F}_p[\![\widehat{\Gamma}]\!]$-modules $P_i$ defined by 
$P_i=R_i \otimes_{{\mathbb Z}[\Gamma]} {\mathbb F}_p[\![\widehat{\Gamma}]\!]$.
In light of
\hyperref[l:goody]{Lemma~\ref{l:goody}}, 
the sequence 
$\ldots\to P_2\to P_1 \to P_0 \to \mathbb{F}_p\to 0$
is exact, since its $q$-th homology is isomorphic to
$\varprojlim_N {\rm H}_q(N,\mathbb{F}_p)$
(\textit{cf}. \cite[Lemma~2.1]{KZ:08}).
So $G$ is of type $\operatorname{FP}_\infty$ over $\mathbb{F}_p$.
Finally, since $\mathbb{Z}_p=\varprojlim \mathbb{Z}/p^k\mathbb{Z}$,
by passing to the limit it follows that
$G$ is of type $\operatorname{FP}_\infty$ over $\mathbb{Z}_p$
(\textit{cf}. \cite[Thm.~2.5]{KZ:08}). 

\noindent
(b) Using the notation in item (a), 
consider the 
finitely generated profinite projetive 
${\mathbb Z}_p[\![\widehat{\Gamma}]\!]$-modules $Q_i$ defined by 
$Q_i=R_i \otimes_{{\mathbb Z}[\Gamma]} {\mathbb Z}_p[\![\widehat{\Gamma}]\!]$.
We have that 
$\protect{Q_i \otimes_{ {\mathbb Z}_p[\![\widehat{\Gamma}]\!]} {\mathbb Z}_p}$
is isomorphic to
$R_i \otimes_{ {\mathbb Z}[\Gamma]} {\mathbb Z}_p$
and hence to
$R_i \otimes_{ {\mathbb Z}_p[\Gamma]} {\mathbb Z} \otimes_{\mathbb Z} {\mathbb Z}_p$;
thus its ${\mathbb Z}_p$-rank equals 
${\rm rk}_{{\mathbb Z}}
(R_i\otimes_{{\mathbb Z}[{\Gamma}]} {\mathbb Z})$.
So, the desired Euler-Poincar\'e characteristic over ${\mathbb Z}_p$ can be expressed 
as follows
\[
\aligned
\chi_{{\mathbb Z}_p}(\widehat{\Gamma})
&=\sum_i (-1)^i{\rm rk}_{{\mathbb Z}_p}{\rm H}_i(\widehat{\Gamma},{\mathbb Z}_p) 
=\sum_i (-1)^i{\rm rk}_{{\mathbb Z}_p}
{\rm Tor}_i^{{\mathbb Z}_p[\![\widehat{\Gamma}]\!]}({\mathbb Z}_p,{\mathbb Z}_p) \\
&=\sum_i (-1)^i{\rm rk}_{{\mathbb Z}_p}
(Q_i\otimes_{{\mathbb Z}_p[\![\widehat{\Gamma}]\!]} {\mathbb Z}_p) 
=\sum_i (-1)^i
{\rm rk}_{{\mathbb Z}}
(R_i\otimes_{{\mathbb Z}[\Gamma]} {\mathbb Z})  \\
&=\sum_i (-1)^i{\rm rk}_{{\mathbb Z}}
{\rm Tor}_i^{{\mathbb Z}[{\Gamma}]}({\mathbb Z},{\mathbb Z})
=\sum_i (-1)^i{\rm rk}_{{\mathbb Z}}{\rm H}_i({\Gamma},{\mathbb Z})
=\chi(\Gamma) \, .
\endaligned
\]

The last assertion of item (b) follows from its discrete version
proved by Kochloukova \cite[Lemma~5]{Kochloukova:10}.
Note 
that
the cohomological Euler-Poincar\'e characteristic of $\widehat{\Gamma}$ over $\mathbb{F}_p$
equals $\chi_{{\mathbb Z}_p}(\widehat{\Gamma})$
because 
\[
\protect{{\rm dim}_{{\mathbb F}_p}
(S_i\otimes_{{\mathbb F}_p[\![\widehat{\Gamma}]\!]} {\mathbb F}_p)}
=
{\rm rk}_{{\mathbb Z}_p}
(S_i\otimes_{{\mathbb Z}_p[\![\widehat{\Gamma}]\!]} {\mathbb Z}_p)
\, .
\]

\noindent
(c) Fix an index $k$. By Pontryagin duality, 
${\rm H}_q(U_k,\mathbb{F}_p)\cong {\rm H}^q(U_k,\mathbb{F}_p^*)^*$ 
(\textit{cf.} \cite[Prop.~6.3.6]{RZ:00b}).
Moreover, $U_k$ is of type FP$_{\infty}$ over $\mathbb{Z}_p$, because so is $G$;
hence ${\rm H}^q(U_k,\mathbb{F}_p)$ is finite, and
therefore 
${\rm H}_q(U_k,\mathbb{F}_p)$ is isomorphic to ${\rm H}^q(U_k,\mathbb{F}_p)$.
Now, 
let $N_k$ be the preimage of 
$U_k$ under the canonical map $\Gamma\to \widehat{\Gamma}$.
Note that 
$\dim {\rm H}^q(U_k,\mathbb{F}_p)=\dim {\rm H}_q(N_k,\mathbb{F}_p)$.
For, 
from goodness, 
${\rm H}^q(U_k,\mathbb{F}_p)\cong {\rm H}^q(N_k,\mathbb{F}_p)$,
and by the Universal coefficient theorem,
${\rm H}^q(N_k,\mathbb{F}_p)\cong 
\textup{Hom}({\rm H}_q(N_k,\mathbb{F}_p),\mathbb{F}_p)$.
So, the desired result follows from its discrete version demonstrated by
Bridson and Kochloukova \cite[Cor. B]{BK:17}.
\end{proof}


The rest of this section is devoted to the group-theoretic structure of 
groups in the class $\mathcal Z$.
To prove
\hyperref[t:virtsol_main]{Theorem~\ref{t:virtsol_main}}
we show next that 
a very strong form of Tits alternative
holds for 
these groups.

As generally expected,
the profinite situation is more complex than
discrete or pro-$p$.
For instance, 
this is already evident in the class $\mathcal{Z}_0$:
it is easy to find
a 
projective profinite group which is
non-free, non-abelian, and virtually $\widehat{\mathbb Z}$
({\it e.g.}, some $\mathbb{Z}_{p'}\rtimes \mathbb{Z}_p$).
We have included 
a proof
of the following straightforward lemma
for the sake of convenience.

\begin{lemma}
\label{l:sylow_cyclic}
Let $G$ be a projective profinite group.
The following assertions are equivalent:
\begin{itemize}
\item[\rm (i)] $G$ has no non-abelian free pro-$p$ closed subgroup,
for any prime $p$;
\item[\rm (ii)] $G$ is meta-procyclic projective
\textup{(}{\it i.e.}, isomorphic to
$\mathbb{Z}_\pi \rtimes\mathbb{Z}_\rho$, 
for some disjoint sets of primes $\pi$ and $\rho$\textup{)};
\end{itemize}
\noindent
Moreover, if $G$ is nilpotent then it is procyclic.
\end{lemma}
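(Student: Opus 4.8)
The plan is to translate both conditions into a statement about Sylow subgroups. Since $G$ is projective, each of its Sylow pro-$p$ subgroups is a free pro-$p$ group (\textit{cf.}\ \cite[Ch.~7]{RZ:00b}); a free pro-$p$ group of rank $\ge 2$ is non-abelian, while those of rank $\le 1$ are $\mathbb{Z}_p$ or trivial. Hence condition (i) is equivalent to requiring that every Sylow subgroup of $G$ be procyclic (and, being free pro-$p$, torsion-free, so isomorphic to $\mathbb{Z}_p$ or trivial). The whole lemma then reduces to identifying the projective groups with procyclic Sylow subgroups as exactly the groups $\mathbb{Z}_\pi\rtimes\mathbb{Z}_\rho$ with $\pi\cap\rho=\varnothing$.

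For (ii) $\Rightarrow$ (i) I would argue directly: in $G=\mathbb{Z}_\pi\rtimes\mathbb{Z}_\rho$ with $\pi\cap\rho=\varnothing$, for each prime $p$ at most one of the two factors carries a $p$-part, so every Sylow pro-$p$ subgroup of $G$ is a copy of $\mathbb{Z}_p$ or trivial. As every pro-$p$ subgroup lies in a conjugate of a Sylow subgroup, every pro-$p$ subgroup of $G$ is procyclic, hence abelian, and so $G$ has no non-abelian free pro-$p$ subgroup.

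The substance is (i) $\Rightarrow$ (ii). First I would pass to the finite continuous quotients $\bar G$ of $G$: the image of a Sylow subgroup is again a Sylow subgroup, so every $\bar G$ has all Sylow subgroups cyclic, that is, $\bar G$ is a $Z$-group. By the classical structure theorem for finite groups with cyclic Sylow subgroups (H\"older--Burnside--Zassenhaus), each $\bar G$ is metacyclic, with $(\bar G)'$ and $\bar G/(\bar G)'$ cyclic of coprime orders (equivalently, $(\bar G)'$ is a cyclic Hall subgroup). Taking inverse limits, $G':=\overline{[G,G]}=\varprojlim (\bar G)'$ and $G^{\mathrm{ab}}=G/G'=\varprojlim \bar G/(\bar G)'$ are both procyclic. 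Writing $\pi$ and $\rho$ for their respective sets of prime divisors, the coprimality in every finite quotient forces $\pi\cap\rho=\varnothing$: a prime in both would divide $|(\bar G)'|$ and $|\bar G/(\bar G)'|$ in a common finite quotient, a contradiction. Thus $G'\cong\mathbb{Z}_\pi$ is a normal (pronilpotent) Hall pro-$\pi$ subgroup with procyclic pro-$\rho$ quotient $\mathbb{Z}_\rho$, and the profinite Schur--Zassenhaus theorem (\textit{cf.}\ \cite{RZ:00b}) provides a complement, yielding $G\cong\mathbb{Z}_\pi\rtimes\mathbb{Z}_\rho$, which is (ii). For the final assertion, if $G$ is moreover nilpotent then it is pronilpotent, hence the direct product of its Sylow subgroups, each procyclic by the first paragraph; a direct product of copies of $\mathbb{Z}_p$ over pairwise distinct primes is procyclic (a quotient of $\widehat{\mathbb{Z}}$), so $G$ is procyclic.

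I expect the main obstacle to be the bookkeeping in the inverse limit in the direction (i) $\Rightarrow$ (ii): verifying that $G'$ and $G^{\mathrm{ab}}$ are genuinely procyclic with disjoint prime supports rests on the coprimality $\gcd\bigl(|(\bar G)'|,|\bar G/(\bar G)'|\bigr)=1$ supplied by the finite $Z$-group theorem, and on invoking the correct profinite form of Schur--Zassenhaus to split the resulting Hall extension. The finite $Z$-group classification is the conceptual input, but it is standard; the care lies in its profinite packaging.
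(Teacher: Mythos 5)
Your proposal is correct and follows essentially the same route as the paper: reduce (i) to all Sylow subgroups being procyclic (via projectivity, so Sylows are free pro-$p$), pass to finite quotients and invoke the classical Zassenhaus/$Z$-group theorem to get the commutator subgroup and abelianization cyclic of coprime orders, take inverse limits, and split with the profinite Schur--Zassenhaus theorem; the nilpotent case is likewise handled identically. The only difference is that you spell out the (ii)~$\Rightarrow$~(i) direction explicitly, which the paper leaves implicit.
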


\begin{proof} 
From condition (i),
every $p$-Sylow subgroup of $G$ is procyclic.
Clearly, the $p$-Sylow subgroups of each finite quotient of $G$
are cyclic.
Hence the commutator subgroup and the abelianization
of each finite quotient of $G$ are
cyclic groups of coprime orders
({\it cf.} \cite[Ch.~V, Thm.~11]{Zassenhaus}).
Passing to the limit we get that
$\overline{[G,G]}$ and $G/\overline{[G,G]}$
are procyclic groups of coprime orders.
Thus, we reach (ii) from the profinite version of Schur-Zassenhaus theorem for
profinite groups
({\it cf.} \cite[Thm. 2.3.15]{RZ:00b}).

Moreover, if $G$ is nilpotent, 
then it is the cartesian product of its Sylow subgroups,
hence procyclic.
\end{proof}

\begin{thm}
\label{t:virtsol}
Let $H$ be a closed subgroup of a group in the class $\mathcal{Z}$.
The following assertions are equivalent:
\begin{itemize}
\item[\rm (i)] $H$ has no non-abelian free pro-$p$ closed subgroup,
for any prime $p$;
\item[\rm (ii)] $H$ is abelian 
or meta-procyclic projective;
\end{itemize}
Moreover, 
nilpotent closed subgroups of a group in the class $\mathcal{Z}$
are abelian, and there exists a uniform \textup{(}finite\textup{)} bound for their rank.
\end{thm}

\begin{proof}
Say $H$ is a closed subgroup of some group $G_n$ of $\mathcal{Z}_n$.
If $n=0$, then 
\hyperref[l:sylow_cyclic]{Lemma~\ref{l:sylow_cyclic}}
concludes the proof.

Suppose then $n\geq 1$.
Let $T$ be the standard tree on which $G_n$ acts,
and restrict this action to $H$.
From \hyperref[l:2acyl]{Lemma~\ref{l:2acyl}},
the only possible cases of
\hyperref[teo:caract-zal90]{Theorem~\ref{teo:caract-zal90}}
are (a) and (c.1).
This completes the result.
\end{proof}

In particular we have proved 
\hyperref[t:virtsol_main]{Theorem~\ref{t:virtsol_main}}.

\begin{cor} \label{c:centre}
The profinite completion of a non-abelian limit group
has trivial centre.
\end{cor}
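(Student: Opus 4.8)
The plan is to show that a nontrivial central element of $\widehat{\Gamma}$ would force $\widehat{\Gamma}$ to be abelian, contradicting that $\Gamma$ is non-abelian (since $\Gamma$ embeds densely in $\widehat{\Gamma}$, abelianness descends). By Theorem~\ref{t:completion} we may realise $\widehat{\Gamma}$ as a closed subgroup of $\widehat{\Gamma_n}=\widehat{\Gamma_{n-1}}\amalg_{\widehat{Z_{n-1}}}\widehat{A_{n-1}}$, and I would argue by induction on the height $n$ of the ambient extension-of-centralizers tower. For the base case $n=0$, a non-abelian finitely generated free group $\Gamma$ has profinite completion that is free profinite of rank $\ge 2$, hence centreless.

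For the inductive step, suppose $1\ne z\in Z(\widehat\Gamma)$ and let $\widehat\Gamma$ act on the standard profinite tree $T$ of $\widehat{\Gamma_n}$ by restriction of the $\widehat{\Gamma_n}$-action. Centrality of $z$ makes $\overline{\langle z\rangle}$ normal in $\widehat\Gamma$ and makes the fixed-point set $\mathrm{Fix}(z)$ a $\widehat\Gamma$-invariant subtree of $T$; I would split according to whether $z$ is hyperbolic or elliptic. If $z$ is hyperbolic, the unique minimal $\overline{\langle z\rangle}$-invariant subtree of Proposition~\ref{prop:invar-zal90} is an infinite straight path, and normality of $\overline{\langle z\rangle}$ forces $\widehat\Gamma$ to preserve it; thus $\widehat\Gamma$ maps into the automorphism group of a pro-line (a pro-infinite-dihedral group) with kernel fixing the path pointwise, hence lying in a procyclic edge stabiliser. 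Therefore $\widehat\Gamma$ is (procyclic)-by-(virtually procyclic), so virtually soluble, and the profinite completion of a virtually soluble limit group is abelian, a contradiction.

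If instead $z$ is elliptic, then the non-empty $\widehat\Gamma$-invariant subtree $\mathrm{Fix}(z)$ contains the minimal $\widehat\Gamma$-invariant subtree $D$ of Proposition~\ref{prop:invar-zal90}, so $z$ fixes $D$ pointwise. When $D$ carries an edge, $z$ lies in an edge stabiliser, i.e.\ in a conjugate of a nontrivial procyclic subgroup of $C_{n-1}$; since $z$ is central, $\widehat\Gamma$ normalises $\overline{\langle z\rangle}$, so by Remark~\ref{rmk:abelian_normalizers} (via condition \textbf{A2}) $\widehat\Gamma$ lies in a conjugate of the abelian group $A_{n-1}$, again forcing $\widehat\Gamma$ abelian. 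The remaining case is $D$ a single vertex, so $\widehat\Gamma$ fixes a vertex of $T$ and is contained in a conjugate of $\widehat{\Gamma_{n-1}}$ or of $\widehat{A_{n-1}}$: the abelian factor is excluded immediately, and the factor $\widehat{\Gamma_{n-1}}$ reduces the height of the tower.

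The main obstacle is precisely this last reduction: to invoke the inductive hypothesis I must convert the statement \emph{$\widehat\Gamma$ fixes a vertex of $T$ of type $\widehat{\Gamma_{n-1}}$} into a statement about $\Gamma$ itself, namely that $\Gamma$ is conjugate in $\Gamma_n$ into $\Gamma_{n-1}$, so that $\Gamma$ is a non-abelian limit group living at height $n-1$. Here I would use that the discrete factors $\Gamma_{n-1}$, $A_{n-1}$, and $Z_{n-1}$ are closed in the profinite topology of $\Gamma_n$ and that limit groups are subgroup separable, both recorded in the proof of Theorem~\ref{t:completion}; these ensure that the elliptic fixed vertex of the finitely generated group $\widehat\Gamma$ may be taken rational, so that $\Gamma$ fixes a vertex of the discrete tree and is thereby conjugate into $\Gamma_{n-1}$. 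Granting this rationality, the induction closes and the corollary follows.
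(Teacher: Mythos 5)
Your overall architecture (induction on the height of the tower, restricting the action on the standard profinite tree of $G_n$ to $\widehat\Gamma$, and splitting into elliptic and hyperbolic behaviour of a central element $z$) is broadly the same as the paper's, and your treatment of the subcase where $z$ fixes an edge is correct and clean: $\overline{\langle z\rangle}$ lies in a conjugate of $C_{n-1}$, so by Remark~\ref{rmk:abelian_normalizers} its normalizer --- which contains all of $\widehat\Gamma$ --- is a conjugate of the abelian group $A_{n-1}$, forcing $\widehat\Gamma$, hence $\Gamma$, to be abelian. The serious gap is the hyperbolic case. The minimal $\overline{\langle z\rangle}$-invariant profinite subtree supplied by Proposition~\ref{prop:invar-zal90} is \emph{not} known to be ``an infinite straight path'': the Tits straight path and its profinite closure (Proposition~\ref{prop:profinite_tits}) are available only for elements of the \emph{discrete} group, and a central element $z$ of $\widehat\Gamma$ need not lie in a conjugate of $\Gamma$. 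For a general procyclic subgroup acting without global fixed point, the minimal invariant subtree is just some infinite profinite tree; there is no pro-infinite-dihedral automorphism group to map $\widehat\Gamma$ into, and the assertion that the kernel of the action on it ``fixes the path pointwise, hence lies in a procyclic edge stabiliser'' is unjustified. Consequently the claimed virtual solubility of $\widehat\Gamma$ does not follow. The paper closes this case by a genuinely different argument: using $2$-acylindricity (Lemma~\ref{l:2acyl}) together with \cite[Lemma~2.4(d)]{Zalesskii:90} it shows that $Z(G)$ acts freely on the minimal invariant subtree and is isomorphic to $\mathbb{Z}_\pi$, then forms the closed subgroup generated by $Z(G)$ and the intersection of $G$ with an edge stabiliser (a copy of $\widehat{\mathbb Z}$ unless $\Gamma$ is a free product, in which case $\widehat\Gamma$ is a free profinite product and centreless anyway); this subgroup contains $\mathbb{Z}_p\times\mathbb{Z}_p$, which must fix a vertex, contradicting freeness of the action.

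The ``rationality'' difficulty you flag in the elliptic vertex case is also a genuine gap as you have set things up, and your proposed repair is not carried out: knowing that the finitely generated group $\widehat\Gamma$ fixes some coset $g\widehat{\Gamma_{n-1}}$ with $g\in\widehat{\Gamma_n}$ does not formally yield, via subgroup separability, a conjugation of $\Gamma$ into $\Gamma_{n-1}$ inside the discrete tree. The difficulty disappears, however, if you phrase the induction on the height $m$ of an ambient profinite group $G_m\in\mathcal{Z}_m$ containing $\widehat\Gamma$ as a closed subgroup rather than on the discrete tower containing $\Gamma$: once $\widehat\Gamma$ is conjugate into the vertex group $\widehat{\Gamma_{n-1}}$, it is a closed subgroup of a group of $\mathcal{Z}_{n-1}$ and the inductive hypothesis applies with no statement about $\Gamma$ itself needed. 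This is how the paper's induction is organised, so this second gap is one of formulation rather than substance; the hyperbolic case, by contrast, needs a new argument.
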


\begin{proof}
Let $G$ be the profinite completion of a non-abelian limit group.
If the non-abelian group $G$ is free profinite of finite rank, 
then it has trivial centre $Z(G)$
({\it cf.} \cite[Lemma~8.7.3(a)]{RZ:00b}).

Next, suppose 
$G$ is a closed subgroup of 
$G_n=
\widehat{\Gamma_{n-1}}\amalg_{\widehat{Z_{n-1}}}\widehat{A_{n-1}}$,
where $\Gamma_{n-1}$, $A_{n-1}$, and $Z_{n-1}$
are as in the proof of \hyperref[t:completion]{Theorem~\ref{t:completion}}.
Let $T$ be the standard profinite tree on which $G_n$ acts,
and let $D$ be a minimal $Z(G)$-invariant profinite subtree of $T$
(see \hyperref[prop:invar-zal90]{Proposition~\ref{prop:invar-zal90}}).

If $D$ is a singleton, then 
$Z(G)$ is the stabilizer of a vertex of $T$;
in other words, $Z(G)$ is a subgroup of $g\widehat{\Gamma_{n-1}}g^{-1}$ or 
$g\widehat{A_{n-1}}g^{-1}$
for some $g$ in $G_n$.
In the first situation we use induction on $n$;
in the second, 
since $G$ is non-abelian,
we obtain 
$Z(G)=h^{-1}Z(G)h\subseteq \widehat{A_{n-1}}\cap hg\widehat{A_{n-1}}(hg)^{-1}$ 
for any $h$ in $\protect{G\!-\!\widehat{A_{n-1}}}$.
So, by \hyperref[l:2acyl]{Lemma~\ref{l:2acyl}}, we get $Z(G)=\{1\}$.

Otherwise, 
by \hyperref[l:2acyl]{Lemma~\ref{l:2acyl}},
we could apply \cite[Lemma~2.4(d)]{Zalesskii:90}
to see that $Z(G)$ would act freely on $D$ and be isomorphic to $\mathbb{Z}_{\pi}$ 
for a non-empty set $\pi$ of primes;
moreover, \textit{it would act freely on $T$} (\textit{cf.} \cite[Thm.~2.13]{ZM:88}).
Now, writing $G=\widehat\Gamma$, we may and do assume 
that $\Gamma$ intersects some edge stabilizer of the tree on which it acts; 
for, otherwise $\Gamma$ is a free product,
and hence $G$ is a free profinite product and thus a centreless group
(\textit{cf.} \cite[Thm.~9.1.12]{RZ:00b} or \cite[Thm.~2.13]{ZM:88}).
Thus, the intersection of $G$ with a conjugate of $\widehat{Z_{n-1}}$
is isomorphic to $\widehat{\mathbb{Z}}$.
Therefore, 
the closed subgroup $H$ of $G$ generated by $Z(G)$ and that intersection
would be isomorphic to $\mathbb{Z}_\pi\times \widehat{\mathbb{Z}}$; 
hence $H$ would contain a closed subgroup isomorphic to $\mathbb{Z}_p\times \mathbb{Z}_p$
which, by \hyperref[l:2acyl]{Lemma~\ref{l:2acyl}}
and \hyperref[teo:caract-zal90]{Theorem~\ref{teo:caract-zal90}}, would fix a vertex of $T$.
By \cite[Prop.~2.1]{Zalesskii:90}, 
the minimal $H$-invariant profinite subtree of $T$ would be a vertex.
So, this case cannot happen.
\end{proof}


Now we turn to 
\hyperref[t:fg_pro-p_main]{Theorem~\ref{t:fg_pro-p_main}}.
It follows at once from the next more general theorem, which should be compared
with 
\cite[Prop.~2.4]{HZZ:16},
\cite[Thm.~3.5]{SZ:14},
and
\cite[Thm.~11.1]{WZ:17a}.


\begin{lemma}[\protect{\textit{cf.} \cite[Lemma~3.7]{WZ:17b}}] \label{l:k_edge_stabilizers}
Let $H$ be a pro-$p$ group acting on a 
pro-$p$ tree $T$ 
with compact non-empty edge set 
having only finitely many maximal vertex stabilizers up to conjugation. 
Suppose the distance between any two distinct vertices 
in the orbits of those finitely many vertices is infinite.
If $k$ is a given integer $\ge 0$,
then $H$ acts on a quotient 
pro-$p$ tree $D$ with non-empty edge set 
such that 
the stabilizer of each edge 
in $D$ fixes at least $k$ edges in $T$.
\end{lemma}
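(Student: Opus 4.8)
The plan is to obtain $D$ from $T$ by collapsing a suitable closed $H$-invariant subforest, keeping precisely those edges whose stabilizer already fixes at least $k$ edges of $T$. I will use the standard fact that collapsing a closed $H$-invariant subgraph of a pro-$p$ tree, each of whose connected components is a subtree, again produces a pro-$p$ tree on which $H$ acts, and that the edges of the quotient are exactly the non-collapsed edges; moreover, since distinct surviving edges inject into the quotient, the stabilizer in $D$ of a surviving edge $e$ coincides with the stabilizer in $T$ of the corresponding edge (cf.~\cite{ZM:88}, \cite{ZM:89}, \cite{R:17}). In this way the whole problem reduces to choosing the right subforest to collapse.

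Concretely, I would let $F$ be the closed $H$-invariant subgraph generated by all edges $e$ whose stabilizer $\operatorname{stab}_H(e)$ is non-trivial and fixes fewer than $k$ edges of $T$, together with all vertices of $T$. Every connected component of a subgraph of a tree is a subtree, so collapsing $F$ produces a pro-$p$ tree $D$ carrying an $H$-action. By construction, a surviving edge either has trivial stabilizer — which then fixes every edge of $T$, hence at least $k$ of them, as $T$ is infinite by the infinite-distance hypothesis — or has a non-trivial stabilizer fixing at least $k$ edges; in both cases the required property on the edge stabilizers of $D$ holds. The $H$-invariance of $F$ is immediate, and its closedness is to be deduced from the upper semicontinuity of the edge-stabilizer assignment over the compact edge set, which is exactly where the compactness hypothesis is used.

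It remains to show that $D$ has a non-empty edge set, equivalently that $F$ does not collapse all of $T$ to a single vertex; this is the heart of the argument and is where the hypotheses on the distinguished vertices enter. Writing $v_1,\dots,v_m$ for representatives of the finitely many orbits of maximal vertex stabilizers and $W=\bigcup_i Hv_i$, I would show that two distinct points of $W$ never lie in the same connected component of $F$, so that they map to distinct vertices of $D$; since a pro-$p$ tree with at least two vertices has a non-empty edge set, this finishes the proof. Reducing first to the minimal $H$-invariant subtree (Proposition~\ref{prop:invar-zal90}) keeps the set $W$ non-degenerate and makes the bookkeeping manageable.

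The main obstacle I anticipate is precisely this separation step, because it must be carried out with the closure $F$ rather than with the raw union of "bad" edges: passing to the closure can join, in the profinite topology, vertices that are abstractly at infinite distance, so the naive reasoning "a connected component of $F$ is abstractly connected, hence of finite diameter, hence meets $W$ in at most one point" is unavailable. The plan to circumvent this is to exploit the compactness of the edge set together with the finiteness of the maximal-stabilizer orbits to control the components of $F$: one analyzes how $F$ meets the profinite geodesic between two points of $W$ and shows that a component containing two such points would be incompatible with every edge along that geodesic having a stabilizer that fixes fewer than $k$ edges. I expect this compactness-and-finiteness analysis, delicately combined with the infinite-distance hypothesis, to be the technical crux on which the lemma turns.
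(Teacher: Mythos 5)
Your strategy---collapse a closed $H$-invariant subforest consisting of the ``bad'' edges, so that the surviving edges are exactly those whose stabilizers already fix at least $k$ edges---is genuinely different from the paper's, and the two places you yourself flag as ``to be deduced'' and ``the technical crux'' are precisely where it breaks down as written. First, closedness of $F$: the set of edges $e$ with $\operatorname{stab}_H(e)\neq 1$ fixing fewer than $k$ edges is not upper semicontinuous in any usable sense (for fixed $h\neq 1$ the fixed-edge set is closed, but your condition quantifies over the whole stabilizer and over the global count of its fixed edges), so there is no reason for this set to be closed. Passing to the closed subgraph it generates is legal, but then you lose all control over which edges get swallowed, and in particular over whether any edge survives at all. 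Second, and fatally, you never establish $E(D)\neq\emptyset$: you only assert that a component of $F$ containing two points of $W$ ``would be incompatible'' with the definition of $F$, with no argument, after correctly observing that the naive finite-diameter reasoning is unavailable once closures are taken. As it stands the proposal is a plan whose two essential steps are missing.

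The paper's proof sidesteps both difficulties by collapsing a different, explicitly compact subgraph, namely $\bigcup_{i} H\,T_k(v_i)$ where $T_k(v_i)$ is a maximal subtree of radius $k$ about $v_i$. Closedness is then automatic ($H\,T_k(v_i)$ is the continuous image of the compact set $H\times T_k(v_i)$), and the infinite-distance hypothesis forces the connected components of this union to be single balls of radius $k$, so that edges survive the collapse. The stabilizer bound comes from the one structural fact your proposal never invokes: every edge stabilizer $H_m$ is contained, after an $H$-translation, in some maximal vertex stabilizer $H_{v_i}$, hence fixes pointwise the geodesic $[m,v_i]$; a surviving edge lies at distance greater than $k$ from every translate of every $v_i$, so $H_m$ fixes at least $k$ edges along that geodesic. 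I would recommend rebuilding the argument around this observation rather than trying to salvage the closure of the bad-edge set.
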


\begin{proof} 
Let  $H_{v_1}, \ldots, H_{v_n}$ be maximal vertex stabilizers of $T$ up to conjugation; 
each edge or vertex $m$ of $T$ has stabilizer $H_m$ contained in some of $H_{v_i}$ 
up to $H$-translation of $m$. 
Assume $H_m\subseteq H_{v_i}$.
Thus $H_m$ fixes the smallest
pro-$p$ tree $S$ containing $m$ and $v_i$; 
if $m\neq v_i$, then
each vertex of $S$ 
has an incident edge (\textit{cf.} \cite[Prop.~2.15]{ZM:88}). 
Let $T_k(v)$ be a maximal 
subtree of radius $k$ around~$v$. 
Thus, if $m\neq v_i$, then $m\not\in T_k(v_i)$ and 
the stabilizer $H_m$ fixes at least $k$ distinct edges. 
Now  $\bigcup_{i=1}^n H T_k(v_i)$  is a profinite $H$-subgraph of $T$ 
and collapsing all its connected components to distinct points in $T$ we obtain, 
by \cite[Proposition, p.~486]{Zalesskii:92},
a pro-$p$ tree $D$ on which $H$ acts 
with edge stabilizers stabilizing at least $k$ distinct edges of $T$.
\end{proof} 

\begin{thm} \phantomsection \label{t:fg_pro-p}
Let $G$ be a profinite group
acting acylindrically on an infinite profinite tree $T$ with compact edge set 
such that the edge stabilizers are procyclic.
Each finitely generated pro-$p$ closed subgroup $H$ of $G$
splits as
\[
H=\left (H\cap stab_G(v_1)\right) \amalg^p \ldots \amalg^p
\left (H\cap stab_G(v_n)\right)
 \amalg^p F \, ,
\]
where $\amalg^p$ denotes the free pro-$p$ product, 
$v_1,\ldots, v_n$ are vertices of $T$,
and $F$ is a free pro-$p$ group.
\end{thm}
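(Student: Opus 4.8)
The plan is to use the acylindricity of the action to collapse the $H$-action on $T$ down to an action with trivial edge stabilizers, and then to read off the decomposition from the pro-$p$ Bass--Serre theory. First I would note that the profinite tree $T$ is in particular a pro-$p$ tree: since $\widehat{\mathbb Z}=\prod_q {\mathbb Z}_q$, the defining exact sequence of $T$ as a profinite tree splits as the product over the primes $q$ of the corresponding sequences with ${\mathbb Z}_q$-coefficients, so exactness over $\widehat{\mathbb Z}$ forces exactness over ${\mathbb Z}_p$, which is the defining condition of a pro-$p$ tree. Restricting the $G$-action, the finitely generated pro-$p$ group $H$ thus acts on the pro-$p$ tree $T$, whose edge set is compact and, as $T$ is infinite, non-empty.

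The core of the argument is to verify the two hypotheses of Lemma~\ref{l:k_edge_stabilizers} for this action. For the finiteness of maximal vertex stabilizers up to conjugation I would appeal to acylindrical accessibility for finitely generated pro-$p$ groups, the circle of results the theorem is to be compared with (\textit{cf.} \cite[Thm.~11.1]{WZ:17a}, \cite[Thm.~3.5]{SZ:14}): a finitely generated pro-$p$ group acting acylindrically on a pro-$p$ tree with procyclic edge stabilizers has only finitely many conjugacy classes of maximal vertex stabilizers, say represented by $H\cap stab_G(v_1),\dots,H\cap stab_G(v_n)$. For the separation hypothesis I would use acylindricity directly: if $k_0$ denotes the acylindricity constant, then any geodesic of length exceeding $k_0$ has trivial pointwise stabilizer, so after a preliminary $H$-equivariant collapse of the (controlled) configurations of orbit points lying at finite mutual distance, one may assume that the vertices of $Hv_1\cup\dots\cup Hv_n$ are pairwise at infinite distance.

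With both hypotheses in hand I would apply Lemma~\ref{l:k_edge_stabilizers} with any fixed $k>k_0$. The lemma yields a quotient pro-$p$ tree $D$ with non-empty edge set on which $H$ acts and whose edge stabilizers each fix at least $k$ edges of $T$; such a stabilizer fixes a geodesic segment of length $>k_0$ and is therefore trivial by acylindricity, so $D$ has trivial edge stabilizers. The same estimate handles the uncollapsed vertices of $D$: a vertex not lying in the collapsed ball around any $v_i$ has stabilizer properly contained in a maximal one and still fixes a segment of length $>k_0$, hence is trivial; while the separation hypothesis ensures each collapsed ball encloses a single orbit point, so that the collapsed vertices have stabilizers exactly $H\cap stab_G(v_i)$.

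Finally, $H$ acts on the pro-$p$ tree $D$ with trivial edge stabilizers and with the sole non-trivial vertex stabilizers $H\cap stab_G(v_1),\dots,H\cap stab_G(v_n)$; the pro-$p$ structure theorem for an action on a pro-$p$ tree with trivial edge stabilizers (\textit{cf.} \cite{ZM:89}) then gives $H=(H\cap stab_G(v_1))\amalg^p\cdots\amalg^p(H\cap stab_G(v_n))\amalg^p F$ with $F$ free pro-$p$, each factor being finitely generated because it is a continuous retract of the finitely generated group $H$. I expect the main obstacle to be the verification of the hypotheses of Lemma~\ref{l:k_edge_stabilizers}: the finiteness of maximal vertex stabilizers, which rests on pro-$p$ acylindrical accessibility, and especially the reduction ensuring that orbit points are pairwise at infinite distance, which is the delicate geometric step of the proof.
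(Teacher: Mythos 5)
Your proposal is correct and takes essentially the same route as the paper's proof: finitely many maximal vertex stabilizers up to conjugation (the paper cites \cite[Thm.~3.5]{SZ:14}), verification of the infinite-distance hypothesis of Lemma~\ref{l:k_edge_stabilizers} (which the paper delegates to \cite[Lemma~11.2]{WZ:17a} rather than arguing it directly), collapsing to a quotient pro-$p$ tree whose edge stabilizers fix many edges and are therefore trivial by acylindricity, and finally the splitting theorem for pro-$p$ actions with trivial edge stabilizers (the paper cites \cite[Prop.~2.4]{HZZ:16}).
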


\begin{proof} By \cite[Thm. 3.5]{SZ:14},
there are only finitely many maximal by inclusion vertex stabilizers in 
$H$ up to conjugation. 
By \cite[Lemma~11.2]{WZ:17a},
we can apply
\hyperref[l:k_edge_stabilizers]{Lemma~\ref{l:k_edge_stabilizers}}
to see that $H$ acts on a profinite tree $D$ such that 
each edge stabilizer stabilizes at least $k$ distinct edges and hence is trivial. 
Then the result follows from \cite[Prop.~2.4]{HZZ:16}.
\end{proof}

This proves \hyperref[t:fg_pro-p_main]{Theorem~\ref{t:fg_pro-p_main}}.
We should mention that, by Lemma \ref{l:2acyl}, the previous theorem follows by direct application of \cite[Thm 3.8]{WZ:17b}.


Finally we investigate
the strong property of the transitivity of the commutation.
The conclusion is 
\hyperref[t:CT_main]{Theorem~\ref{t:CT_main}}.
Recall that a group $G$ is 
\emph{commutativity-transitive} 
if for any three non-trivial elements $a$, $b$, and $c$ in $G$
if $a$ commutes with $b$ and $b$ commutes with $c$ then 
$a$ commutes with $c$; 
or equivalently, 
if the centralizer of each non-trivial element in $G$ is abelian.

\begin{prop}  \phantomsection \label{p:centraliz.}
Let $H$ be a closed subgroup of a group in the class $\mathcal{Z}$,
and let $g$ be 
an element of $H$ 
such that $\overline{\langle g\rangle}$ is isomorphic to $\widehat{\mathbb Z}$.
Then:
\begin{itemize}
\item[\rm (i)]
the centralizer $C_H(g)$ is maximal abelian
or meta-procyclic projective;
\item[\rm (ii)]
the normalizer $N_H(\overline{\langle g\rangle})$
is maximal abelian or meta-procyclic projective,
it is abelian only if it coincides with $C_G(g)$. 
\end{itemize}
\end{prop}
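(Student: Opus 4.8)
The plan is to derive both assertions from the strong Tits alternative recorded in Theorem~\ref{t:virtsol}: for each of the two subgroups I first show that it contains no non-abelian free pro-$p$ closed subgroup, so that it is forced to be abelian or meta-procyclic projective, and then I promote the abelian alternative to \emph{maximal} abelian by exploiting the distinguished element $g$. Write $C=C_H(g)$ and $N=N_H(\overline{\langle g\rangle})$, and fix $G_n\in\mathcal Z_n$ containing $H$ as a closed subgroup. Then $C$ and $N$ are themselves closed subgroups of $G_n$, so Theorem~\ref{t:virtsol} applies to them; moreover $g\in Z(C)$, $g\in N$, and $C\le N$.

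For part (i) the crucial step is that $C$ contains no non-abelian free pro-$p$ closed subgroup. I would argue by contradiction: given a finitely generated non-abelian free pro-$p$ subgroup $P_0\le C$, let $z$ topologically generate the $p$-Sylow subgroup $\mathbb Z_p$ of $\overline{\langle g\rangle}\cong\widehat{\mathbb Z}$. Then $z\in\overline{\langle g\rangle}\le C$ has infinite order and centralizes $P_0$, so $Q_0:=\overline{\langle P_0,z\rangle}=P_0\cdot\overline{\langle z\rangle}$ is a finitely generated pro-$p$ closed subgroup of $G_n$ --- hence a pro-$p$ group in the class $\mathcal Z$ --- in which $z$ is central of infinite order. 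By Theorem~\ref{t:fg_pro-p_main}, $Q_0$ is a free pro-$p$ product of free abelian pro-$p$ groups; since $Q_0\supseteq P_0$ is non-abelian and a non-abelian such product has trivial centre, this contradicts $z\in Z(Q_0)$. With this step in hand, Theorem~\ref{t:virtsol} yields that $C$ is abelian or meta-procyclic projective; and if $C$ is abelian it is automatically maximal abelian in $H$, because any abelian subgroup of $H$ containing $C$ contains $g$ and hence centralizes $g$, so is contained in $C$.

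For part (ii) I would transfer the conclusion of (i) to $N$ through the observation that $N/C$ is abelian. Conjugation on $\overline{\langle g\rangle}$ defines a homomorphism $N\to\operatorname{Aut}(\overline{\langle g\rangle})$ with kernel exactly $C$, and $\operatorname{Aut}(\overline{\langle g\rangle})\cong\operatorname{Aut}(\widehat{\mathbb Z})\cong\widehat{\mathbb Z}^{\times}$ is abelian; hence $N/C$ is abelian and $\overline{[N,N]}\le C$. Consequently, if $N$ contained a non-abelian free pro-$p$ subgroup $P_0$, then $\overline{[P_0,P_0]}$ --- which is free pro-$p$ as a closed subgroup of $P_0$, and non-abelian since the free pro-$p$ group $P_0$ of rank $\ge 2$ is not soluble --- would lie in $\overline{[N,N]}\le C$, contradicting part (i). Thus $N$ has no non-abelian free pro-$p$ subgroup, so Theorem~\ref{t:virtsol} makes $N$ abelian or meta-procyclic projective. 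If $N$ is abelian then, since $g\in N$, every element of $N$ centralizes $g$, whence $N\le C$ and so $N=C$, which is maximal abelian by (i); if $N$ is non-abelian it is meta-procyclic projective. This gives both the claimed dichotomy and the statement that $N$ is abelian precisely when $N=C_H(g)$.

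The hard part is the key step of part (i): ruling out non-abelian free pro-$p$ subgroups of the centralizer, the difficulty being that $C$ itself is in general not a pro-$p$ group, so Theorem~\ref{t:fg_pro-p_main} cannot be applied to it directly. The decisive device is to replace $g$ by a topological generator $z$ of its $p$-part, producing a genuinely pro-$p$ group $Q_0$ that carries a non-trivial centre and to which the free-product description does apply. Once (i) is secured, part (ii) follows with little extra effort, the only new input being the abelianness of $\operatorname{Aut}(\widehat{\mathbb Z})$.
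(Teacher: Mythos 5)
Your proof is correct, and for part (i) it follows a genuinely different route from the paper's. The paper argues by induction on $n$: it lets $C=C_{G_n}(g)$ act on the standard profinite tree of $G_n$ and takes a minimal $C$-invariant subtree (Proposition~\ref{prop:invar-zal90}); if that subtree is a vertex, $C$ is conjugate into $G_{n-1}$ or $A_{n-1}$ and induction applies, and otherwise $2$-acylindricity (Lemma~\ref{l:2acyl}) makes the action faithful, so $C$ is projective, and the base case (each Sylow subgroup of $C$ is a free pro-$p$ group with the non-trivial normal procyclic subgroup $\overline{\langle g\rangle}\cap S$, hence is $\mathbb{Z}_p$) gives meta-procyclic projectivity via Lemma~\ref{l:sylow_cyclic}. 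You instead reduce everything to Theorem~\ref{t:virtsol} and kill non-abelian free pro-$p$ subgroups $P_0$ of $C$ by adjoining the $p$-part $z$ of $g$: the finitely generated pro-$p$ group $Q_0=P_0\overline{\langle z\rangle}$ lies in $\mathcal{Z}$, so Theorem~\ref{t:fg_pro-p_main} displays it as a non-trivial free pro-$p$ product, which is centreless, contradicting $1\neq z\in Z(Q_0)$; the hypothesis $\overline{\langle g\rangle}\cong\widehat{\mathbb{Z}}$ enters here (every prime divides the order of $g$), just as it does in the paper's Sylow computation. Your route buys brevity by outsourcing the tree-theoretic work to Theorems~\ref{t:virtsol} and~\ref{t:fg_pro-p_main}, both established before this proposition, so there is no circularity; the paper's route is more self-contained and extracts the projectivity of $C$ directly. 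For part (ii) the two arguments essentially coincide, both resting on the abelianness of $\operatorname{Aut}(\overline{\langle g\rangle})$ so that $N/C$ is abelian and Theorem~\ref{t:virtsol} applies. Two minor points to tighten: Theorem~\ref{t:virtsol} requires excluding \emph{all} non-abelian free pro-$p$ closed subgroups, not just finitely generated ones, so you should note that any such subgroup contains a rank-two one; and the non-solubility of a non-abelian free pro-$p$ group, used to see that $\overline{[P_0,P_0]}$ is non-abelian, deserves a word of justification (a non-trivial closed normal subgroup of infinite index in a free pro-$p$ group of rank $\ge 2$ cannot be procyclic).
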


\begin{proof}  
(i)
It suffices to prove the result for $H=G_n$.
We proceed by induction on $n$.
By simplicity, 
set $G=G_n$ and $C=C_G(g)$.

If $n=0$, 
we have that $\overline{\langle g \rangle}$
is a normal subgroup of the projective profinite group $C$.
Let $S$ be a $p$-Sylow subgroup of $C$.
Then $\overline{\langle g \rangle}\cap S$ is
the $p$-Sylow subgroup of $\overline{\langle g \rangle}$,
and the free pro-$p$ group $S$ has a non-trivial normal procyclic subgroup;
so $S$ is 
isomorphic to $\mathbb{Z}_p$.
Hence $C$ is meta-procyclic projective, by
\hyperref[l:sylow_cyclic]{Lemma~\ref{l:sylow_cyclic}}.

Suppose then $n\geq 1$.
Let $T$ be the standard profinite tree on which $G_n$ acts. 
By 
\hyperref[prop:invar-zal90]{Proposition~\ref{prop:invar-zal90}}
there exists a (non-empty) minimal $C$-invariant profinite subtree $D$. 
When $D$ has only one element, 
there is an element $x$ in $G_n$ such that the conjugate subgroup
$xCx^{-1}$ is contained in $G_{n-1}$ or $A_{n-1}$.
By induction hypothesis, it follows that
$C$ is abelian or meta-procyclic.
Suppose $D$ has more than one element.
Thus, $C$ acts faithfully on $D$ by 
\hyperref[l:2acyl]{Lemma~\ref{l:2acyl}};
hence it is projective by  \cite[Lemma~2.4(d)]{Zalesskii:90}.
The second paragraph of this proof
gives the result.

\medskip

\noindent (ii)
Since the automorphism group
$\operatorname{Aut}(\overline{\langle g\rangle})$ is abelian,
the group $N_H(\overline{\langle g\rangle})$ is
metabelian-by-abelian,
and thus abelian or meta-procyclic projective, by
\hyperref[t:virtsol]{Theorem~\ref{t:virtsol}}.
The last assertion of the 
theorem
is obvious.
\end{proof}

Acylindricality shortened the proof but it is not essential to the result.

\begin{cor}
In a group in the class $\mathcal{Z}$,
each closed subgroup having 
$\widehat{\mathbb{Z}}$
as a direct factor must be abelian.
\hfill \qed
\end{cor}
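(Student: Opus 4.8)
The plan is to reduce immediately to Proposition~\ref{p:centraliz.} and then eliminate the non-abelian possibility by a direct inspection of centres. Write the hypothesis as $H=\overline{\langle g\rangle}\times K$, where $\overline{\langle g\rangle}\cong\widehat{\mathbb Z}$ is the prescribed direct factor and $K$ is a closed complement. Because $\overline{\langle g\rangle}$ is a direct factor, the element $g$ is central in $H$, so its centralizer satisfies $C_H(g)=H$; in particular, $\overline{\langle g\rangle}\cong\widehat{\mathbb Z}$ puts us exactly in the setting of Proposition~\ref{p:centraliz.}. Applying part~(i) of that proposition to $g$ yields that $H=C_H(g)$ is maximal abelian or meta-procyclic projective. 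If $H$ is abelian we are done, so it remains only to rule out the case that $H$ is a non-abelian meta-procyclic projective group.

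In that remaining case Lemma~\ref{l:sylow_cyclic} gives $H\cong\mathbb{Z}_\pi\rtimes\mathbb{Z}_\rho$ with $\pi\cap\rho=\varnothing$ and with a non-trivial action (otherwise $H$ would be abelian), whence $\pi\neq\varnothing$. I would first compute the centre: writing elements as pairs $(a,b)$ with $a\in\mathbb{Z}_\pi$ and $b\in\mathbb{Z}_\rho$, imposing that $(a,b)$ commute with every $(m,r)$ forces first $a=0$ and then that $b$ act trivially on $\mathbb{Z}_\pi$; hence $Z(H)$ is contained in the procyclic factor $\mathbb{Z}_\rho$. Since $g$ is central, $\overline{\langle g\rangle}\subseteq\mathbb{Z}_\rho=\prod_{p\in\rho}\mathbb{Z}_p$, so $g$ has trivial $p$-component for every $p\in\pi$. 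Picking any prime $p\in\pi$ then shows that the $p$-Sylow subgroup of $\overline{\langle g\rangle}$ is trivial, which is incompatible with $\overline{\langle g\rangle}\cong\widehat{\mathbb Z}$. This contradiction eliminates the non-abelian case and finishes the argument.

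The main obstacle is precisely this last step: seeing why a central copy of $\widehat{\mathbb Z}$ cannot sit inside a non-abelian meta-procyclic projective group. The point I would stress is that $\widehat{\mathbb Z}=\prod_p\mathbb{Z}_p$ has a non-trivial $p$-part for \emph{every} prime, whereas the centre of a non-abelian $\mathbb{Z}_\pi\rtimes\mathbb{Z}_\rho$ is confined to the $\rho$-part and therefore misses the non-empty set of primes $\pi$ on which the action is non-trivial. Once this numerical incompatibility is isolated, the corollary follows at once from Proposition~\ref{p:centraliz.}(i).
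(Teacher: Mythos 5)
Your proof is correct and follows the paper's intended route: the corollary is stated there as an immediate consequence of Proposition~\ref{p:centraliz.}(i), and you have simply written out the missing step, namely why a non-abelian $\mathbb{Z}_\pi\rtimes\mathbb{Z}_\rho$ cannot contain a central copy of $\widehat{\mathbb{Z}}$. One micro-quibble: your step ``forces first $a=0$'' tacitly uses that the action of $\mathbb{Z}_\rho$ is non-trivial on \emph{each} $p$-component of $\mathbb{Z}_\pi=\overline{[H,H]}$ (true, since a component acted on trivially could not lie in the commutator subgroup) together with the fact that a non-trivial automorphism of the torsion-free group $\mathbb{Z}_p$ is fixed-point-free; this deserves a word, though even the weaker statement that \emph{some} $p\in\pi$ is missed by $Z(H)$ already yields the contradiction with $\overline{\langle g\rangle}\cong\widehat{\mathbb{Z}}$.
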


For the next and last theorem of this section
we recall concepts of covers of profinite groups.
Let $\Phi(G)$ denote the Frattini subgroup
of a profinite group $G$.
We say that an epimorphism of profinite groups
$\varphi\colon \widetilde{G}\to G$
is a {\it projetive} (resp. {\it Frattini}) cover
if, $\widetilde{G}$ is projetive
(resp. $\ker(\varphi)\subseteq \Phi(\widetilde{G})$).
By \cite[Lemma~2.8.15]{RZ:00b}
({\it cf.} \cite[Prop.~22.6.1]{FJ}),
{to each profinite group $G$ there exists an
epimorphism
$\widetilde{\varphi}\colon\widetilde{G}\to G$, 
which is projetive and Frattini.}
Such cover, which is unique up to isomorphism, is called
the {\it universal Frattini cover}.

\begin{thm} \phantomsection \label{p:CT}
A commutativity-transitive group in the class $\mathcal{Z}$
is either abelian or 
pro-$p$.
\end{thm}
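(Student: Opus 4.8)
The plan is to assume $G$ is non-abelian and commutativity-transitive and to deduce that $G$ is pro-$p$, proceeding by induction on the least $n$ with $G\le_c G_n\in\mathcal Z_n$. First I would locate a non-abelian free pro-$p$ subgroup. If $G$ had no non-abelian free pro-$\ell$ closed subgroup for any prime $\ell$, then by \hyperref[t:virtsol]{Theorem~\ref{t:virtsol}} it would be abelian (excluded) or meta-procyclic projective; I would eliminate the latter under commutativity-transitivity by a centre/centralizer computation. Writing such a group as $\mathbb Z_\pi\rtimes_\phi\mathbb Z_\rho$ with $\phi\neq 1$, its centre is $(\mathbb Z_\pi)^{\phi(\mathbb Z_\rho)}\times\ker\phi$, so if this is non-trivial a central element has the whole (non-abelian) group as centralizer, while if it is trivial one finds an element acting trivially on one procyclic factor but non-trivially on another, whose centralizer is a non-abelian meta-procyclic subgroup; either way commutativity-transitivity fails. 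Hence $G$ must contain a non-abelian free pro-$p$ subgroup $P$, and in particular $G$ is not solvable.

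Next I would set up the tree induction. For $n\ge 1$ I restrict the $2$-acylindrical action of $G_n$ on its standard profinite tree (\hyperref[l:2acyl]{Lemma~\ref{l:2acyl}}) to $G$ and apply \hyperref[teo:caract-zal90]{Theorem~\ref{teo:caract-zal90}}. Cases (c.2) and (c.3) are excluded by $2$-acylindricity, and (c.1) is excluded because it would force $G$ to be solvable, contradicting the presence of $P$. If $G$ fixes a vertex it lies in a conjugate of $A_{n-1}$, which is abelian and hence excluded, or of $G_{n-1}$, where the inductive hypothesis applies. Thus the only surviving configuration is that $G$ acts without a global fixed point and carries $P$ acting freely; it then remains to prove in this situation that no second prime can occur.

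The crux is to show that the existence of $P$ forces $G$ to be pro-$p$, and here I would argue by contradiction, assuming a prime $q\neq p$ divides the order of $G$. Commutativity-transitivity organizes $G$ into its maximal abelian subgroups (the centralizers of its non-trivial elements), which meet pairwise trivially, and the distinct such subgroups attached to non-commuting $x_1,x_2\in P$ pin down the local $p$-structure. The target is an element $g$ with $\overline{\langle g\rangle}\cong\widehat{\mathbb Z}$ whose centralizer is non-abelian meta-procyclic projective, which contradicts \hyperref[p:centraliz.]{Proposition~\ref{p:centraliz.}}(i) under commutativity-transitivity. The universal Frattini cover $\widetilde\varphi\colon\widetilde G\to G$ enters precisely here: its Sylow subgroups are free pro-$\ell$ groups and the pro-$p$-ness of $G$ can be read off the projective group $\widetilde G$, so the interaction of a rank-$\ge 2$ free pro-$p$ Sylow with a non-trivial $q$-Sylow becomes transparent after lifting. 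The main obstacle is exactly the non-abelianness of that centralizer: a naive profinite Bass--Serre computation would make all these centralizers abelian, which is the very error that would wrongly render free profinite groups commutativity-transitive, so the argument must instead be driven by the genuinely meta-procyclic description of centralizers of $\widehat{\mathbb Z}$-generators recalled in Section~\ref{s:class_Z} and encoded in \hyperref[p:centraliz.]{Proposition~\ref{p:centraliz.}}. Once $G$ is shown to be pro-$p$, \hyperref[t:fg_pro-p_main]{Theorem~\ref{t:fg_pro-p_main}} identifies it as a free pro-$p$ product of free abelian pro-$p$ groups, completing the dichotomy.
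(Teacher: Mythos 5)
Your handling of the peripheral configurations is sound: eliminating non-abelian meta-procyclic projective groups under commutativity-transitivity by a centre/kernel-of-the-action computation, and disposing of cases (c.1)--(c.3) of Theorem~\ref{teo:caract-zal90} and of the vertex-stabilizing case via Lemma~\ref{l:2acyl}, Theorem~\ref{t:virtsol} and torsion-freeness, are all correct in substance. But the heart of the theorem --- showing that a non-abelian CT group $G\le_c G_n$ which does not stabilize a vertex cannot involve two distinct primes --- is not proved in your proposal; it is only announced. You state a ``target'' (an element $g$ with $\overline{\langle g\rangle}\cong\widehat{\mathbb Z}$ whose centralizer is non-abelian meta-procyclic projective) without ever constructing it, and the tool you invoke to produce it, the universal Frattini cover of $G$ itself, cannot do the job: for $n\ge 1$ the group $G$ is not projective, so $\widetilde G$ is a cover of $G$ rather than a closed subgroup of it, and there is no reason for $\widetilde G$ to inherit commutativity-transitivity. (A smaller point: a non-abelian meta-procyclic centralizer would contradict commutativity-transitivity directly; it does not contradict Proposition~\ref{p:centraliz.}(i), which explicitly allows that alternative.)

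The paper closes exactly this gap with a construction absent from your proposal. Writing $A_{n-1}=B_{n-1}\times C_{n-1}$ and choosing a procyclic direct factor $D_{n-1}$ of $B_{n-1}$, it forms the normal closure $K$ of $D_{n-1}$ in $G_n$, which is projective by \cite[Thm.~B]{Zalesskii:95}, and sets $H=K\cap G$: a normal, projective, CT subgroup of $G$. The base-case classification (projective and CT implies procyclic or non-abelian free pro-$p$ --- this is where the universal Frattini cover legitimately enters, applied to a finite quotient of a projective group and split back into it as a subgroup) leaves three cases for $H$. If $H$ is non-abelian free pro-$p$ and $G$ is not pro-$p$, a $q$-element of $G$ must act non-trivially on the normal subgroup $H$ and yields a non-abelian, non-pro-$p$, projective CT subgroup, contradicting the base case; if $H$ is non-trivial procyclic, its Sylow subgroups are normal in $G$ and force $G$ to be abelian or non-CT; if $H$ is trivial, $G$ embeds into $G_{n-1}\amalg_{C_{n-1}}(A_{n-1}/D_{n-1})$ and one runs a second induction on $d(A_{n-1})$. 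Without this (or some equivalent) mechanism for playing the two primes against each other inside a normal projective subgroup, your induction does not close.
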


\begin{proof}
A commutativity-transitive group
will be called a CT-group, for simplicity.
Let $G$ be a finitely generated profinite subgroup of some $G_n$
with $G$ a CT-group.
The proof is by induction on $n$.

If $n=0$, then $G$ is either procyclic or non-abelian free pro-$p$.
Indeed, 
let 
$Q$ be a non-abelian finite quotient of $G$
having elements of distinct coprime orders. 
Then, the canonical map $G\to Q$ factors through
the universal Frattini cover
$\widetilde{\varphi}\colon\widetilde{Q}\to Q$
(since $G$ is projective),
in such a way that $G\to \widetilde{Q}$ is also an epimorphism
(because $\ker(\widetilde{\varphi})\subseteq \Phi(\widetilde{Q})$).
Now, 
the Frattini subgroup
$\Phi(\widetilde{Q})$ is an open normal subgroup of $\widetilde{Q}$, 
and $\widetilde{Q}$ has a non-trivial $p$-Sylow, say $S_p$.
Intersecting $S_p$ with $\Phi(\widetilde{Q})$
we obtain a $p$-Sylow subgroup of $\Phi(\widetilde{Q})$,
which has finite index in $S_p$.
Thus, since $\Phi(\widetilde{Q})$ is pronilpotent,
Sylow subgroups of $\widetilde{Q}$
relative to distinct primes must commute pointwise;
hence $\widetilde{Q}$ is pronilpotent.
On the other hand, being $\widetilde{Q}$ projetive, 
it can be seen as a subgroup of $G$; 
so $\widetilde{Q}$ is also a CT-group.
Therefore,
each Sylow subgroup of $\widetilde{Q}$ is abelian;
hence so is $\widetilde{Q}$.

Next, assume
the result is true for the class $\mathcal{Z}_{n-1}$.
Consider $G_n\in\mathcal{Z}_n$ where
$G_n=G_{n-1}\amalg_{C_{n-1}} A_{n-1}$
with $A_{n-1}=B_{n-1}\times C_{n-1}$ 
and $G_{n-1}\ne C_{n-1}\ne A_{n-1}$.
Let $D_{n-1}$ be
a non-trivial procyclic maximal direct factor of $B_{n-1}$
(which may coincide with $B_{n-1}$) and
define
$K=\overline{\langle gD_{n-1}g^{-1}\,|\,g\in G_n\rangle}$.
By \cite[Thm.~B]{Zalesskii:95},
$K$ is a projective profinite group.
\textit{Consider $H=K\cap G$.}
Since in the second paragraph of the proof
we did not use the fact that $G$ is finitely generated,
we have three possibilities for $H$:
non-abelian free pro-$p$, non-trivial procyclic, or trivial.
To conclude the proof
we henceforth suppose that $G$ is not pro-$p$.

First, we claim that $H$ cannot be a non-abelian free pro-$p$ group.
Indeed, 
since there exists an element $y$ in $G$ such that
$\overline{\langle y\rangle}$ is isomorphic to $\mathbb{Z}_q$
with $q\ne p$, and since $G$ is a CT-group, 
the subgroup $\overline{\langle y\rangle}$ would act non-trivially
on the normal subgroup $H$;
that is, there would exist $x$ in $H$ that would not commute with $y$.
So the subgroup $\overline{\langle x,y\rangle}$
would be CT, projetive, non-abelian and non-pro-$p$;
a contradiction to the second paragraph.

Second, suppose that $H$ is non-trivial procyclic.
We take
a non-trivial $\ell$-Sylow subgroup $S$ of $H$,
which is characteristic in $H$, 
and hence normal in $G$.
If $S$ is central in $G$, then $G$ must be abelian.
Otherwise, there would exist an element in $G$
that would not commute with $S$ and 
hence there would exist a subgroup $T$,
of order a power of a prime $r$,
that would not commute with $S$.
Consider then $S.T$, which would be isomorphic to 
$\mathbb{Z}_\ell\rtimes\mathbb{Z}_r$.
If $r\ne \ell$, then this product, and hence $G$, would not be CT
(because a non-abelian CT-group is centreless);
the case $r=\ell$, could not happen by Theorem \ref{t:virtsol}.

Third, suppose $H$ is trivial.
Note that, $K$ is the kernel of the epimorphism
$G_n\longrightarrow G_{n-1}\amalg_{C_{n-1}} (A_{n-1}/D_{n-1})$
induced by the identity map of $G_{n-1}$
and by the canonical map $A_{n-1}\to A_{n-1}/D_{n-1}$.
Thus 
we have an embedding
\[
G \longrightarrow G_{n-1}\amalg_{C_{n-1}} (A_{n-1}/D_{n-1}) \, .
\]
If $d(A_{n-1})=1$, then $A_{n-1}/D_{n-1}$ is $C_{n-1}$
and we are done;
otherwise, $d(A_{n-1}/D_{n-1})<d(A_{n-1})$.
So, the result follows 
inductively. 
\end{proof}

\begin{cor}
The profinite completion of a limit group $\Gamma$
is commutativity-transitive
if, and only if, 
$\Gamma$ is a free abelian group of finite rank.
\end{cor}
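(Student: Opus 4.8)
The plan is to derive the corollary directly from the two principal results on the class $\mathcal{Z}$, combined with elementary facts about limit groups. First I would dispose of the easy implication: if $\Gamma$ is free abelian of finite rank $r$, then $\widehat{\Gamma}\cong\widehat{\mathbb{Z}}^{\,r}$ is abelian, and every abelian group is trivially commutativity-transitive, since the centralizer of each non-trivial element is the whole group. This direction requires no machinery.

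For the converse, suppose $\widehat{\Gamma}$ is commutativity-transitive. By \hyperref[t:completion]{Theorem~\ref{t:completion}} the group $\widehat{\Gamma}$ belongs to the class $\mathcal{Z}$ (note that $\widehat{\Gamma}$ is finitely generated because $\Gamma$ is), so \hyperref[p:CT]{Theorem~\ref{p:CT}} applies and forces $\widehat{\Gamma}$ to be either abelian or pro-$p$. I would then treat these two alternatives in turn.

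In the abelian case, recall that limit groups are residually free and hence residually finite, so the canonical map $\Gamma\to\widehat{\Gamma}$ is injective. Thus $\Gamma$ embeds into an abelian group and is itself abelian; being a finitely generated torsion-free abelian group, $\Gamma$ is free abelian of finite rank, which is exactly the conclusion sought.

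The one point deserving care, and which I regard as the actual content of the argument, is excluding the pro-$p$ alternative when $\Gamma$ is non-trivial. Here I would invoke the property, recorded in the introduction, that a non-trivial limit group has infinite abelianization: the finitely generated abelian group $\Gamma^{\mathrm{ab}}$ then has positive free rank, hence surjects onto $\mathbb{Z}$ and therefore onto $\mathbb{Z}/q\mathbb{Z}$ for every prime $q\neq p$. Consequently $\widehat{\Gamma}$ possesses finite quotients that are not $p$-groups and cannot be pro-$p$ unless $\Gamma$ is trivial. Since the trivial group is free abelian of rank $0$, this completes the classification and with it the proof.
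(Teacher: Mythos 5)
Your proposal is correct and follows essentially the same route as the paper: apply Theorem~\ref{t:completion} and the dichotomy of Theorem~\ref{p:CT}, then rule out the pro-$p$ branch using a residual property of limit groups. The paper rules it out by noting that a non-abelian residually free group has a finite non-$p$-group quotient, whereas you use the infinite abelianization of a non-trivial limit group; both are immediate and the overall argument is the same.
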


\begin{proof}
Being $\Gamma$ residually free, 
if it is non-abelian,
then it has a finite non-$p$-group as a quotient;
hence its profinite completion is not a pro-$p$ group.
\end{proof}

\phantomsection
\section{Final comments} 

In this last section we make a few comparisons 
concerning group-theoretical 
properties of
limit groups, ``limit pro-$p$ groups'' (\textit{i.e.,} those studied in \cite{KZ:11}),
and groups in the class $\mathcal{Z}$.

First, 
if $p$ and $q$ are distinct primes
and $q$ divides $\max\{2,p-1\}$,
consider ${\mathbb{Z}_p\rtimes\mathbb{Z}_q}$.
This example shows that
non-abelian groups in the class $\mathcal{Z}$ may have non-trivial centre
(versus non-abelian limit groups must have trivial centre);
moreover,
virtually abelian groups in the class $\mathcal{Z}$ are not necessarily abelian
(versus virtually nilpotent limit groups must be abelian).

Next,
groups in $\mathcal{Z}$ may have finite continuous abelianization;
\textit{e.g.}, the universal Frattini cover of a 
non-abelian finite simple group
has trivial continuous abelianization
(because 
any Frattini cover of a perfect profinite group is perfect).

Furthermore, 
the non-procyclic group $\widehat{\mathbb{Z}}\amalg \mathbb{Z}_p$
has deficiency one, while 
every closed abelian subgroup of it is at most 1-generated.
For limit groups or ``limit pro-$p$ groups'', 
if every closed abelian subgroup at most 1-generated
but the groups themselves are not 1-generated
then their deficiency must be $\ge 2$.

Also, 
Howson's property fails for groups in the class $\mathcal{Z}$
already at $n=0$:

\begin{prop} 
The free profinite group of rank 2
does not satisfy
the finitely generated intersection property of Howson.
\end{prop}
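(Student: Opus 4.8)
The plan is to exhibit two finitely generated closed subgroups of the free profinite group $F$ of rank $2$ whose intersection is not finitely generated. The natural strategy mirrors the classical discrete counterexample: take $F=\overline{\langle x,y\rangle}$ and consider the closed subgroups $U=\overline{\langle x\rangle}$ together with a suitably chosen normal-type subgroup, but a cleaner route is to use the analogue of the Hanna Neumann phenomenon in the profinite setting. First I would set up $F$ as the free profinite group on $\{x,y\}$ and consider the closed normal subgroup $N=\overline{\langle\langle y\rangle\rangle}$ generated by $y$ as a normal subgroup; then $F/N\cong\widehat{\mathbb Z}$ and $N$ is a free profinite group of infinite rank (by the profinite analogue of the Nielsen--Schreier theorem, \emph{cf.} \cite[Thm.~3.6.2]{RZ:00b}). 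The key point is that $N$ is an inverse limit reflecting the conjugates $x^i y x^{-i}$, which are free generators in the discrete picture.

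The main construction I would carry out is to select two finitely generated closed subgroups $A$ and $B$ of $F$ whose intersection, when analyzed through a common quotient, is forced to be a free profinite group of infinite rank, hence not finitely generated. Concretely, I would take $A=\overline{\langle x, y\,x\,y^{-1}\rangle}$-type subgroups or, following the standard example, work with the projection to $\widehat{\mathbb Z}$ and let $A$ and $B$ be preimages of procyclic subgroups that interact so that $A\cap B$ surjects onto a subgroup of infinite rank. The cleanest realization is to mimic Howson's original abstract example $A=\langle x, y x y^{-1}\rangle$ and $B=\langle x, y^{2} x y^{-2}\rangle$ and argue that in the profinite completion the closures $\overline A$ and $\overline B$ satisfy $\overline A\cap\overline B = \overline{A\cap B}$ using the fact that free groups are subgroup separable and induce the full profinite topology on finitely generated subgroups (here one would invoke that $F$ of finite rank is the profinite completion of a free group $\Phi$ of rank $2$, and that $\Phi$ is LERF so intersections of closures equal closures of intersections). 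Then $A\cap B$ is already infinitely generated in $\Phi$ by Howson's computation, and its closure cannot be finitely generated because the abstract rank is visible in a congruence quotient.

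The step I expect to be the main obstacle is transferring the abstract infinite-generation conclusion to the profinite closure, that is, proving that the closure of an infinitely generated subgroup of $\Phi$ cannot be topologically finitely generated once we know the discrete intersection has infinite rank. The danger is that closure can collapse rank dramatically, so I would need a separation argument: exhibit, for each $k$, a finite quotient of $\Phi$ in which the image of $A\cap B$ requires more than $k$ generators, using that free groups are residually finite and that the relevant abelianized invariants (the images in $\mathrm{H}_1$ of congruence covers) grow without bound. This growth would certify $d(\overline{A\cap B})=\infty$. Thus the plan splits into (1) recalling a concrete discrete Howson counterexample in $\Phi$ of rank $2$, (2) using LERF of free groups so that taking closures commutes with the intersection $\overline{A}\cap\overline{B}=\overline{A\cap B}$, and (3) certifying infinite topological rank of $\overline{A\cap B}$ via unbounded generator number in finite quotients, which yields the failure of Howson's property for the free profinite group of rank $2$.
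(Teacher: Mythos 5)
Your plan fails at step (1), and the failure is fatal to the whole strategy: there is no ``discrete Howson counterexample'' in a free group of rank $2$. Howson's theorem asserts precisely that the intersection of two finitely generated subgroups of a free group \emph{is} finitely generated; the subgroups $\langle x, yxy^{-1}\rangle$ and $\langle x, y^{2}xy^{-2}\rangle$ you propose intersect in a finitely generated subgroup, as must any such pair. Consequently your steps (2) and (3) have nothing to transfer: even granting that $\overline{A}\cap\overline{B}=\overline{A\cap B}$ for finitely generated $A,B\le\Phi$ (which does hold for free groups), the right-hand side is the closure of a finitely generated group and is therefore topologically finitely generated. More fundamentally, any counterexample must involve closed subgroups of $F$ that are \emph{not} closures of finitely generated discrete subgroups; the failure of Howson's property here is a genuinely profinite phenomenon and cannot be obtained by completing a discrete configuration inside a free group.

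The paper's argument (due to Jaikin-Zapirain) instead exploits the interplay of two distinct primes. One takes a closed normal subgroup $N$ of $F$ with $F/N\cong\mathbb{Z}_q$ and passes to the maximal pro-$p$ quotient $H=N/N_p$, which is an \emph{infinitely generated} free pro-$p$ group, yet $G=F/N_p\cong H\rtimes\mathbb{Z}_q$ is $2$-generated. Letting a free pro-$q$ group $T$ on $\{g_1,g_2\}$ act on $H$ with both generators acting as the original $\mathbb{Z}_q$ does, the group $T\ltimes H$ is $3$-generated, has free Sylow subgroups, hence is projective and embeds in a free profinite group; the two $2$-generated subgroups $G_i=\overline{\langle g_i, H\rangle}$ intersect exactly in $H$, which is not finitely generated. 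If you want to salvage your write-up, this is the construction to aim for; no amount of separability machinery applied to discrete subgroups of $\Phi$ will produce the example.
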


\begin{proof} \textup{(A. Jaikin-Zapirain \cite{J:15})}.
Let $p$ and $q$ be two different primes. 
Pick a closed normal subgroup $N$ of a free profinite group $F$ of rank 2
such that $F/N$ is isomorphic to the free pro-$q$ group $A$ with basis $\{a\}$.
Let $H=N/N_p$ be the maximal pro-$p$ quotient of $N$.
So, the 2-generated group $G=F/N_p$ is 
a semi-direct product of the infinitely generated free pro-$p$ group $H$ and $A$.

Now, let $T$ be a free pro-$q$ group with basis $\{g_1,g_2\}$. 
Define its action on $H$ in such a way that $g_1$ and $g_2$ act as does $a$. 
Form a semidirect product $T\ltimes H$ and, for $i=1,2$, take  
the subgroup $G_i$ topologically generated by $g_i$ and $H$ therein.
Being $G_1$ and $G_2$ isomorphic to $G$, they are 2-generated.

So, $T\ltimes H$ is a 3-generated profinite group. 
Since its Sylow subgroups are free, it has cohomological dimension 1, 
and therefore it is a subgroup of a free profinite group. 
But $G_1\cap G_2=H$ is not finitely generated.
\end{proof}

%
%
%



Finally, 
a non-abelian free profinite group
cannot be residually soluble.
We do not know whether
the ``limit pro-$p$ groups" are necessarily residually nilpotent.

%
%
%
%
%
%


\phantomsection
\section*{Acknowledgements}
\label{s:acknowl}

Both authors are grateful for the
partial financial support from CNPq and CAPES.



\begin{thebibliography}{MM99}

\footnotesize 


\bibitem[AB06]{AB:06}
{E. Alibegovi{\'c} and M. Bestvina},
\emph{Limit groups are {$\rm CAT(0)$}},
{J. London Math. Soc. (2)},
{\bf 74}
{(2006)}, 
{259--272}.

\bibitem[BBau67]{BBaumslag:67}
{B. Baumslag},
\emph{Residually free groups},
{Proc. London Math. Soc. (3)},
{\bf 17}
{(1967)}, 
{402--418}.

\bibitem[Bau62]{Baumslag:62}
{G. Baumslag},
\emph{On generalised free products},
{Math. Z.},
{\bf 78}
{(1962)},
{423--438}.


\bibitem[BH07]{BH:07}  
{M. Bridson and J. Howie},
\emph{Normalisers in limit groups},
{Math. Ann.},
{\bf 337}
{(2007)},
{no. 2},  
{385--394}.

\bibitem[BK17]{BK:17}
{M. Bridson and D. Kochloukova},
\emph{Volume gradients and homology in towers of residually-free groups},
{Math. Ann.},
{\bf 367}
{(2017)},
{1007--1045},

\bibitem[CE56]{CE:56}
{H. Cartan and S. Eilenberg},
\emph{Homological algebra},
{Princeton University Press},
{1956},
{Princeton}.


\bibitem[CZ07]{CZ:07}
{S. Chagas and P. Zalesskii},
\emph{Limit Groups are conjugacy separable},
{Internat. J. Algebra Comput.},
{\bf 17}
{(2007)},
{no. 4},  
{851--857}.

\bibitem[CG05]{CG:05}
{C. Champetier and V. Guirardel},
\emph{Limit groups as limits of free groups},
{Israel J. Math.},
{\bf 146}
{(2005)},  
{1--75}.

\bibitem[FJ05]{FJ}
{M. Fried and M. Jarden},
\emph{Field arithmetic},
{Springer-Verlag},
{New York},
{2005}.

\bibitem[J15]{J:15}
{A. Jaikin},
{Do free profinite groups satisfy Howson's theorem?},
in
\emph{MathOverflow},
\url{http://mathoverflow.net/questions/195166},
{accessed 2015-02-02}.

\bibitem[HZZ16]{HZZ:16}
{W. Herfort, P. Zalesskii, and T. Zapata},
\emph{Splitting theorems for pro-$p$ groups acting on pro-$p$ trees},
{Selecta Math. (N.S.)},
{\bf 22},
{(2016)},
{no. 3}
{1245--1268}.
\url{http://dx.doi.org/10.1007/s00029-015-0217-7}.


\bibitem[KhM98a]{KhM:98a}
{O. Kharlampovich and A. Myasnikov},
\emph{Irreducible affine varieties over a free group. {I}.
              {I}rreducibility of quadratic equations and
              Nullstellensatz.},
{J. Algebra},
{\bf 200}
{(1998)},
{no. 2},  
{472--516}.

\bibitem[KhM98b]{KhM:98b}
{O. Kharlampovich and A. Myasnikov},
\emph{Irreducible affine varieties over a free group. {II}.
              {S}ystems in triangular quasi-quadratic form and description
              of residually free groups},
{J. Algebra},
{\bf 200}
{(1998)},
{no. 2},  
{472--516}.

\bibitem[KhM06]{KhM:06}
{O. Kharlampovich and A. Myasnikov},
\emph{Elementary theory of free non-abelian groups},
{J. Algebra},
{\bf 302}
{(2006)},
{no. 2},  
{451--552}.

\bibitem[Koch10]{Kochloukova:10}
{D. Kochloukova},
\emph{On subdirect products of type {${\rm FP}_m$} of limit groups}, 
{J. Group Theory},
{\bf 13}
{(2010)},
{no. 1},  
{1--19}.


\bibitem[KZ08]{KZ:08}
{D. Kochloukova and P. Zalesskii},
\emph{Profinite and pro-{$p$} completions of {P}oincar\'e duality
              groups of dimension 3},
{Trans. Amer. Math. Soc.},
{\bf 360}
{(2008)},
{no. 4},
{1927--1949}.



\bibitem[KZ11]{KZ:11}
{D. Kochloukova and P. Zalesskii},
\emph{On pro-{$p$} analogues of limit groups via extensions of centralizers},
{Math. Z.},
{\bf 267}
{(2011)},
{no. 1--2},
{109--128}.


\bibitem[Rem89]{Remeslennikov:89}
{V. Remeslennikov},
\emph{{$\exists$}-free groups},
{Sibirsk. Mat. Zh.},
{\bf 30}
{(1989)},
{193--197}.


\bibitem[Rib71]{Ribes:71}
{L. Ribes},
\emph{On amalgamated products of profinite groups},
{Math. Z.},
{\bf 123}
{(1971)},
{357--364}.

\bibitem[Rib17]{R:17} 
{L. Ribes}, 
\emph{Profinite graphs and groups}, 
Springer,  2017. 

\bibitem[RZ96]{RZ:96}
{L. Ribes and P. Zalesskii},
\emph{Conjugacy separability of amalgamated free products of groups},
{J. Algebra}
{\bf 179}
{(1996)},
{no. 3},
{751--774}.

\bibitem[RSZ96]{RSZ:98}
{L. Ribes, D. Segal, and P. Zalesskii},
\emph{Conjugacy separability and free products of groups with cyclic
              amalgamation},
{J. London Math. Soc. (2)}
{\bf 57}
{(1998)},
{no. 3},
{609--628}.

\bibitem[RZ00a]{RZ:00a}
{L. Ribes and P. Zalesskii},
\emph{Pro-{$p$} trees and applications},
{New horizons in pro-{$p$} groups},
{Progr. Math.},
{184},
{75--119},
{Birkh\"auser Boston},
{Boston, MA},
{2000}.

\bibitem[RZ00b]{RZ:00b}
{L. Ribes and P. Zalesskii},
\emph{Profinite groups},
{Springer-Verlag},
{Berlin},
{2000}.


\bibitem[Sel01]{Sela:01}
{Z. Sela},
\emph{Diophantine geometry over groups. {I}. {M}akanin-{R}azborov
              diagrams},
{Publ. Math. Inst. Hautes \'Etudes Sci.},
{\bf 93}
{(2001)}, 
{31--105}.

\bibitem[Sel06]{Sela:06}
{Z. Sela},
\emph{Diophantine geometry over groups. {VI}. {T}he elementary
              theory of a free group},
{Geom. Funct. Anal.},
{\bf 16}
{(2006)},
{no. 3}, 
{707--730}.

\bibitem[Ser77]{Serre:77}
{J-P. Serre},
\emph{Arbres, amalgames, {${\rm SL}_{2}$}},
{Soci\'et\'e Math\'ematique de France}, 
{Paris},
{1977}.

\bibitem[Ser94]{Serre:94}
{J-P. Serre},
\emph{Cohomologie Galoisienne},
{Lecture Notes in Mathematics},
{5},
{Springer-Verlag},
{Berlin},
{1994}.

\bibitem[SZ14]{SZ:14}
{I. Snopce and P. Zalesskii},
\emph{Subgroup properties of pro-{$p$} extensions of centralizers},
{Selecta Math. (N.S.)},
{\bf 20},
{(2014)},



\bibitem[Tit70]{Tits:70}
{J. Tits},
\emph{Sur le groupe des automorphismes d'un arbre}.
In {Essays on topology and related topics ({M}\'emoires d\'edi\'es
\`a {G}eorges de {R}ham)},
{188--211},
{Springer, New York},
{1970}.


\bibitem[Wil08]{Wilton:08}
{H. Wilton},
\emph{Hall's theorem for limit groups},
{Geom. Funct. Anal.},
{\bf 18}
{(2008)}, 
{271--303}.

\bibitem[WZ17a]{WZ:17a}
{H. Wilton and P. Zalesskii}.
\emph{Distinguishing geometries using finite quotients},  
{\em Geom. Topol.}, 
21(1), 
345--384, 2017.

\bibitem[WZ17b] {WZ:17b}
{H. Wilton and P. Zalesskii}.
\emph{Pro-$p$ subgroups of profinite completions of 3-manifold groups}
 {\bf 96}, (2017),  293--308.




















\bibitem[Zal89]{Zalesskii:89}
{P. Zalesskii},
\emph{Geometric characterization of free constructions of profinite
              groups},
{Siberian Math. J.},
{\bf 30}
{(1989)},
{no. 2},
{227–-235}.

\bibitem[Zal90]{Zalesskii:90}
{P. Zalesskii},
\emph{Profinite groups, without free nonabelian pro-{$p$}-subgroups,
              that act on trees},
{Mat. Sb.},
{\bf 181}
{(1990)},
{no. 1},
{57--67}.

\bibitem[Zal92]{Zalesskii:92}
{P. Zalesskii},
{Open Subgroups of Free Profinite Products}.
In \emph{Proceedings of the International Conference on Algebra},
{Part 1 (Novosibirsk, 1989)},
{473--491},
{Contemp. Math.},
\textbf{131},
{Part 1},
{Amer. Math. Soc.},
{Providence}, {RI},
{1992}.


\bibitem[Zal95]{Zalesskii:95}
{P. Zalesskii},
\emph{Normal subgroups of free constructions of
  profinite groups and the congruence kernel in the case of positive
  characteristic},
{Izv. RAN. Ser. Mat.},
{\bf 59}:3
{(1995)},
{59--76}.


\bibitem[ZM88]{ZM:88}
{P. Zalesskii and O. Mel'nikov},
\emph{Subgroups of profinite groups acting on trees},
{Mat. Sb. (N.S.)},
{\bf 135(177)}
{(1988)},
{no. 4},
{419--439, 559}.

\bibitem[ZM89]{ZM:89}
{P. Zalesskii and O. Mel'nikov},
\emph{Fundamental groups of graphs of profinite groups},
{Algebra i Analiz},
{\bf 1}
{1989},
{no. 4},
{117--135}.

\bibitem[Zap11]{tese}
{{T. Zapata}},
\emph{Grupos pro-finitos limites},
{Ph.D thesis},
{Universidade de Bras\'ilia},
{2011}.

\bibitem[Zas49]{Zassenhaus}
{H. Zassenhaus},
\emph{The {T}heory of {G}roups},
{Chelsea Publishing Company, New York},
{1949}.

\end{thebibliography}
\end{document}